\documentclass[10pt]{amsart}

\usepackage{latexsym,amsmath,amssymb,graphicx,epsfig,color,hyperref,supertabular,cite}
\usepackage{pdfpages}
\usepackage{pst-node}
\usepackage{tikz-cd} 
\usepackage[all,cmtip]{xy}
\usepackage[UKenglish]{babel}
\usepackage[T1]{fontenc}
\usepackage{amsmath,tikz}

\oddsidemargin=2.5pc \evensidemargin=2.5pc \textwidth=34pc

%

\def\pplogo{\vbox{\kern-\headheight\kern -15pt
\halign{##&##\hfil\cr&{
\ppnumber}\cr\rule{0pt}{2.5ex}&\ppdate\cr} }} \makeatletter
\def\ps@firstpage{\ps@empty \def\@oddhead{\hss\pplogo}%
  \let\@evenhead\@oddhead 
  
}
\thispagestyle{plain}
 \setcounter{footnote}{0}
\renewcommand{\thetable}{\@Alph\c@table}
\makeatother

\flushbottom

\DeclareFontFamily{OT1}{rsfs10}{}
\DeclareFontShape{OT1}{rsfs10}{m}{n}{ <-> rsfs10 }{}
\DeclareMathAlphabet{\mathscript}{OT1}{rsfs10}{m}{n}

\suppressfloats
\def\ppnumber{\vbox{\baselineskip16pt }}
\def\ppdate{}
\date{} 

\newsavebox{\fmbox}




\author{Antonella Grassi and Timo Weigand}



\title[On topological invariants of threefolds with singularities]{On topological invariants of algebraic threefolds with ($\Q$-factorial) singularities}

  \address{Antonella Grassi, Department of Mathematics, University of Pennsylvania, Philadelphia, PA 19104}
  \email{grassi@math.upenn.edu}
  \address{Timo Weigand, Theory Department, CERN, CH-1211 Geneva, Switzerland and Institut f{\"u}r Theoretische Physik, Ruprecht-Karls-Universit\"at Heidelberg, Philosophenweg 19, 69120 Heidelberg, Germany}


\theoremstyle{plain}
\newtheorem{thm}{Theorem}

\newtheorem{corollary}[thm]{Corollary}

\newtheorem{proposition}[thm]{Proposition}
\newtheorem{lemma}[thm]{Lemma}

\newtheorem{method}[thm]{Assignment}
\newtheorem{conjecture}[thm]{Conjecture}
\theoremstyle{definition}
\newtheorem{definition}[thm]{Definition}
\newtheorem{ex}[thm]{Example}

\newtheorem{definitionp}[thm]{Definition-Proposition}

\theoremstyle{remark}
\newtheorem{remark}[thm]{Remark}

\numberwithin{thm}{section}

\newcommand{\cd}{\mathrm{CxDef}}

\newcommand{\cO}{{\mathcal O}}
\newcommand{\cU}{{\mathcal U}}

\newcommand{\cl}{{\mathcal Cl}}

\def\C{\mathbb{C}}
\def\Z{\mathbb{Z}}

\def\Q{\mathbb{Q}}

\def\P{\mathbb{P}}

\def\r{\boldmath  \mathcal R}

\newcommand{\g}{{\mathfrak g}}
\newcommand{\T}{{\mathfrak a}}
\newcommand{\D}{{\mathfrak d}}
\newcommand{\E}{{\mathfrak e}}

\def\Pic{\operatorname{Pic}}
\def\Spec{\operatorname{Spec}}

\def\WDiv{\operatorname{WDiv}}
\def\Div{\operatorname{CDiv}}

\def\rk{\operatorname{rk}}

\def\codim{\operatorname{codim}}

\def\cD{\operatorname{CxDef}}
\def\kD{\operatorname{KaDef}}
\def\ns{\operatorname{NS}}
\def\NE{\operatorname{NE}}

\newcommand{\be}{\begin{equation}}
\newcommand{\ee}{\end{equation}}
\newcommand{\bea}{\begin{eqnarray}}
\newcommand{\eea}{\end{eqnarray}}
\def\su{\mathfrak{su}}
\def\spin{\mathfrak{so}}
\def\sp{\mathfrak{sp}}

\newcommand{\adj}{\operatorname{adj}}
\newcommand{\fund}{\operatorname{fund}}

\newcommand{\spinrep}{\operatorname{spin}}
\newcommand{\vect}{\operatorname{vect}}

\def\so{{\mathfrak {so}}}
\newcommand{\nonmin}{NM}
\newcommand{\terminal}{NSR}


\begin{document}
\date{\today}

\ppnumber{\qquad \quad \quad \quad \quad \qquad \quad \quad \quad \quad\qquad \quad \quad \quad \quad  \qquad \quad \quad \quad \quad \qquad \quad \quad \quad \quad \quad CERN-TH-2018-013}

\begin{abstract} 
We study local, global and local-to-global properties of threefolds with certain singularities.
We prove criteria for these  threefolds to be rational homology manifolds and conditions for threefolds to satisfy  rational Poincar\'e duality. We   
relate the  topological Euler characteristic  of   elliptic Calabi-Yau threefolds with $\Q$-factorial terminal singularities to dimensions of  Lie algebras and certain representations, Milnor and Tyurina numbers   and  other birational invariants of an
elliptic fibration.  We give an interpretation in terms of complex deformations.
We state a  conjecture on the extension  of   Kodaira's classification of  singular fibers on relatively minimal elliptic  surfaces  to the class of  birationally equivalent  relatively minimal  genus one fibered varieties and we give results in this direction.

\end{abstract}

\maketitle
\section{Introduction}
We study local, global and local-to-global properties of threefolds with singularities, in particular  terminal and   klt, $\Q$-factorial singularities. The computation of the topological  Euler characteristic 
of a genus one fibered variety is a known  illustration of such properties:  the non trivial contributions  are localized in the stratified singular loci of the fibration and are combined  via the Mayer-Vietoris Theorem.
Poincar\'e duality is another.   For complex  varieties  local  and global deformations  are of interest, and  so are local-to-global principles relating local and global deformations.
 A less known occurrence is the  expected correspondence, predicted by physics, between 
elliptic fibrations of smooth Calabi-Yau varieties and Lie algebras  together with their representations.

We state and formalize 
 the   correspondence between fibrations and algebras in terms of    local and  global properties of  the stratified singular locus of elliptic fibrations (not necessarily smooth). We  then prove related  local-to-global properties for Calabi-Yau threefolds with $\Q$-factorial terminal singularities.
In particular we   prove a formula which relates the dimensions of Lie algebras and certain representations to the topological Euler characteristic  of   elliptic Calabi-Yau threefolds with $\Q$-factorial terminal singularities, Milnor and Tyurina numbers   and  other birational invariants of the  (relative minimal) elliptic fibration (Definition \ref{defr} and Theorem \ref{Anomalycondition1}). We 
 state precise conjectures for this correspondence for more general varieties and fibrations.
Our results constitute   a 
 step towards what we call a ``Grothendieck-Brieskorn" program regarding the intriguing connection between singular varieties and Lie algebras, together with certain distinguished representations.  
In fact, Brieskorn, Grothendieck,  and later Slodowy \cite{BrieskornGrot,Slodowy}, discovered beautiful connections between surface singularities and Lie algebras, but a mathematical explanation  of the parallelisms of the classification  between singularities and Lie algebras remains elusive (Arnol'd,
\cite{Arnold}).
A well known illustration of the parallelism is the  correspondence between surface rational double points  and Lie algebras: 
 rational double points
are classified by the Dynkin diagrams of the
simply laced Lie algebras of type $\T, \D,\E$  \cite{DV,coxeter:annals}.
These are also the singularities of Weierstrass elliptic surfaces.

Although Calabi-Yau threefolds with $\Q$-factorial terminal singularities  and elliptic fibrations are the focus of the applications in Section \ref{chitop},  we state our program in more generality,  in particular for higher dimensions and different singularities.   In Section \ref{BG}  we review and extend the correspondence between fibrations and Lie algebras and their representations, and state  local-to-global principles which relate them.
The correspondence is expressed in terms  of  the components of codimension one and two of the stratified singular loci of the fibrations. 
  (A different mathematical approach  using deformation is taken in  ongoing projects \cite{GrassiHalversonShanesonJS}.)
   Key ingredients for the  proofs in Sections \ref{BG} and \ref{chitop} are several other local, global and local-to-global results which we prove in the preceding sections.

 In  Section \ref{ClGQFRP} we start with a review of different notions of cohomologies
  which agree in the case of rational homology manifolds; rational homology manifolds satisfy Poincar\' e duality. 
We then establish necessary and sufficient conditions for threefolds with certain  rational and klt  singularities to be rational homology manifolds (Theorem \ref{kltRHMT} with J. Shaneson, and Theorem \ref{klt}) and to satisfy rational Poincar\'e duality (Theorems \ref{rPD1} and   \ref{rPD2}). These conditions are expressed in terms of the properties of  local analytic and (global) algebraic $\Q$-factoriality as well as local-to-global principles.
To this end, we prove a local analytic $\Q$-factorialization result for isolated klt 
  singularities (Theorem \ref{Klqf}).
The above  singularities occur naturally in the minimal model program as well as in  various physics models. The minimal varieties in the sense of the Mori Minimal Model Programs have generally terminal $\Q$-factorial singularities. In string theory 
it is known that some Calabi-Yau fourfolds with terminal singularities are the ``correct" models, even when a smooth  birationally equivalent minimal  Calabi-Yau exists  \cite{DDFGK}.
Even for Calabi-Yau threefolds 
 the appearance of $\Q$-factorial terminal singularities seems oftentimes unavoidable \cite{BraunMorrison,LustTFects,Morrison:2016lix,Font:2017cya}. While klt singularities of varieties (not of pairs $(B, \Delta)$) have not yet appeared  in the physics literature that we know of, we speculate that they  might  occur naturally as boundary components, in the study of the heterotic/F-theory duality.   Generalized  ``Calabi-Yau" threefolds with isolated klt but not canonical singularities are of interest in mathematics, for example regarding their structure and rationality properties (Remark \ref{R3}).

In Section \ref{motivation} we recall some known properties of smooth Calabi-Yau threefolds, in particular elliptically fibered ones, and state the original motivation of our work.  We review the geometric and algebraic definitions of global and local analytic and algebraic $\Q$-factoriality, the local Picard group, as well as of the algebraic and analytic Class groups in Section \ref{basics}.
In Section  \ref{FQFAF} we  also summarize some properties of factoriality and $\Q$-factoriality which are hard to find in the literature. In Section \ref{CYCond} we discuss the ``Calabi-Yau" condition, in any dimension, and $\Q$-factoriality.

 In Section  \ref{cxDefMT} we turn to Calabi-Yau threefolds with $\Q$-factorial terminal singularities.  We find that  the  dimension of the complex deformation space is computed by $b_3(X)$ with a modification coming from the dimension of the versal deformations of each singularity, the Tyurina/Milnor number in this case.  The  decomposition  (\ref{CdefY3a}) suggests the existence of a ``local-to-global principle" (Conjecture \ref{localtoglobaldef}). Section \ref{b2} concerns K\"ahler deformations.  The results about complex and K\"ahler deformations  play a role in the proof of the results in Section \ref{chitop}. Some of the results in Section \ref{chitop} generalize previous results of \cite{GrassiMorrison03} for smooth elliptic Calabi-Yau threefolds. 
 
In Section \ref{FtheoryInterpretation} we  review the basics of the predicted correspondence by the physics of F-theory and interpret our results.  Applications to physics are studied in companion papers \cite{ArrasGrassiWeigand} and \cite{GrassiWeigand2}.

We state a precise conjecture (Conjecture \ref{KodairaExt}) on the extension  of   Kodaira's classification of  singular fibers on relatively minimal genus one surfaces to the class of  birationally equivalent  relatively minimal  genus one fibered varieties.  We support  the conjecture with  a local-to-global principle, by associating to the stratified discriminant  locus   of the fibration $\Sigma$ the non abelian  and abelian gauge algebras and their representations.
We give results in this direction.

\vskip 0.1in

Because this paper spans from  algebraic geometry, algebra and topology, with applications to string theory,  we have also  included some general definitions and properties.

\vspace{5pt}

\noindent{\bf  Acknowledgements}

We thank P. Aluffi, M. Banagl,  J. Koll\'ar, L. Maxim, R. Pardini, M. Rossi,   R. Svaldi, S. Verra and especially J. Shaneson, for comments on the draft and useful discussions. We also thank the Aspen Center for Physics, the Fields Institute in Toronto, and the Banff International Research Station
for hospitality during various stages of this project. 
The work of TW is partially supported by DFG under TR33 ``The Dark Universe".

\section{Motivation: smooth Calabi-Yau varieties }\label{motivation}
  \begin{definition} Let $X$ be a complex normal algebraic threefold. $X$ is  Calabi-Yau if $h^i(X, \cO_X)=0 $, $  i  = 1,2, $ and $K_X \sim \cO_X$.
\end{definition}
If  $X$ is also  smooth, Poincar\'{e} duality and the Hodge decomposition imply that  the Betti numbers (of the singular cohomology) are  
\begin{equation}\label{CY}
b_1=0, \ \  b_2= h^{1,1}(X), \ \ b_3= 2(1+h^{2,1}).
\end{equation}
The topological Euler characteristic can be written as
\begin{equation}\label{chi}
\chi_{top}(X)=2 \{h^{1,1}(X)-h^{2,1}(X)\}.
\end{equation}
The above expression is particularly relevant in physics applications  because the  Picard group  $\Pic(X)$ is isomorphic to  $H^2(X, \Z)$;  its rank, which is also called  the rank of the N\'eron-Severi group $\ns(X)$, counts the K{\"a}hler  deformations $h^{1,1}(X)$ of $X$.  Similarly $ h^{2,1}(X)$ is the dimension of the  space of complex deformations of $X$, the Kuranishi space  of $X$.
 When $X$ has singularities, these identifications do not necessarily hold, so we denote by $\kD$ the dimension of the space of K{\"a}hler deformations and  by $\cD$ the dimension of the space of complex deformations.
 In the smooth case  \eqref{chi} becomes
\begin{equation}\label{Dchi}
\chi_{top}(X)=2 \{\kD(X)-\cD(X)\}.
\end{equation}
The celebrated mirror symmetry implies that  mirror pairs of  smooth Calabi-Yau  threefolds have topological Euler characteristics of opposite sign.

Equation \eqref{Dchi}  is used in \cite{GrassiMorrison03}  to express the topological Euler  characteristic of  general elliptically fibered smooth  Calabi-Yau threefolds (with a section) in terms of the Lie algebras and their representations which are  naturally associated to the singular fibers of the fibration. The results of \cite{GrassiMorrison03,GrassiMorrison11}  are motivated by the ``anomaly cancellation" mechanism in physics. In fact, the cancellation of anomalies interpreted in the geometry of smooth elliptically fibered Calabi-Yau threefolds \cite{GrassiMorrison11} involves $\kD(X)$ and $\cD(X)$ as defined above. In Section \ref{chitop} we generalize these results to singular Calabi-Yau threefolds. We focus on  the singularities which occur naturally in the minimal model program, i.e.~terminal, and klt, $\Q$-factorial singularities. We recall the basics in the following Section \ref{basics}.
 
\section{Terminal, canonical, klt, ($\Q$-)factoriality}\label{basics}
 Let $X$ be a complex normal reduced algebraic variety.  
\subsection{Terminal, canonical, klt}
A resolution 
of $X$ is a birational (bimeromorphic) morphism $
\rho: Y \rightarrow X$, with  $Y$ smooth.
  $X$ has rational singularities if and only if  $R^i\rho_* \cO_Y =0,\  i >0$.   $X$ has $\ell$-rational singularities if  and only if $R^i \rho_* \cO_Y =0,\  0 < i  \leq \ell$. 
  Let $j: X_0  \hookrightarrow X$ be the natural inclusion of the smooth locus, with canonical bundle $\omega_0(X_0)$  and let ${\omega_X}= j_*(\omega_0(X_0))$.  Since $X$ is normal, $\omega_X$ is  a reflexive sheaf;  let $K_X$ be the associated Weil divisor.

    \begin{definition}$X$ is {\it $\Q$-Gorenstein} if there exists some integer $r$ such that $rK_X$ is a Cartier divisor (that is $K_X$ is $\Q$-Cartier). The  minimum such integer $r$ is the index of $X$.  

  $X$ is Gorenstein if it is Cohen-Macaulay and of index $1$.
   \end{definition} 
  Let $X$ be a  $\Q$-Gorenstein variety  and  $\rho: Y \to X$ a smooth resolution with  
exceptional divisors $E_i$. If  a resolution $\rho$ is an isomorphism in codimension one, the resolution is called small.
In general
$
rK_{Y} = \rho^*rK_X + \sum_i a_i  r E_i \,, 
$
where  $a_i \in \Q$ are the {\it discrepancies}. 
\begin{definition} 
\begin{enumerate}
\item If $a_i > 0$ for all $i$, $X$ is said to have {\it at worst terminal} singularities. 
\item  If  $a_i \geq 0 $, for all $i$, $X$ is said to  have   {\it{at worst canonical }} singularities.
\item  If $a_i > -1$ for all $i$, $X$ is said to have {\it at worst  klt} singularities. 
\item 
 If $a_i  \geq  -1$ for all $i$, $X$ is said to have {\it at worst  log canonical} singularities. 
\end{enumerate}
\end{definition}

A smooth variety has at worst terminal singularities. If $X$ is a surface it can be shown that  $X$ has at worst terminal singularities if and only if it  is smooth. The canonical  surface singularities are the $\mathfrak a,  \mathfrak d,  \mathfrak e $ singularities (rational double points). 
  A singularity is klt if and only if it is a cyclic quotient of an index $1$ canonical singularity by an action which is fixed point free in codimension $2$ \cite[Corollary 5.21]{KollarMori}.

 \begin{thm}[See for example  6.2.12, \cite{IshiiLibro}]\label{kltRat}  Klt  singularities, hence canonical and terminal singularities, are rational  and in particular Cohen-Macaulay.
 \end{thm}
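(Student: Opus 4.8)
The plan is to reduce everything to the klt case and then to establish rationality by a relative vanishing theorem, obtaining Cohen--Macaulayness as a consequence via duality. Since $a_i>0$ implies $a_i\ge 0$, which in turn implies $a_i>-1$, the terminal and canonical classes are contained in the klt class, so it suffices to treat a klt variety $X$. Fix a log resolution $\rho\colon Y\to X$ whose exceptional locus $\bigcup_i E_i$ is a simple normal crossing divisor, and write $K_Y=\rho^*K_X+\sum_i a_iE_i$ with $a_i>-1$. Rationality, $R^i\rho_*\cO_Y=0$ for $i>0$, is independent of the chosen resolution, so it is enough to verify it on this $\rho$; normality of $X$ already supplies $\rho_*\cO_Y=\cO_X$.

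For the vanishing I would exploit that, because $a_i>-1$, the round-up $D:=\lceil\sum_i a_iE_i\rceil$ is effective and $\rho$-exceptional. Rewriting $D=K_Y+\lceil-\rho^*K_X\rceil$ and noting that $-\rho^*K_X$ is $\rho$-numerically trivial, hence $\rho$-nef, and automatically $\rho$-big since $\rho$ is birational, relative Kawamata--Viehweg vanishing gives $R^i\rho_*\cO_Y(D)=0$ for $i>0$. It then remains to descend from $\cO_Y(D)$ to $\cO_Y$: starting from the inclusion $\cO_Y\hookrightarrow\cO_Y(D)$ (with $\rho_*\cO_Y(D)=\cO_X$, because $D$ is effective and exceptional and $X$ is normal), I would induct on the components and coefficients of $D$, peeling off one prime exceptional divisor at a time and controlling the successive quotients by applying Kawamata--Viehweg vanishing on the exceptional components through adjunction. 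This yields $R^i\rho_*\cO_Y=0$ for $i>0$, i.e.\ $X$ has rational singularities.

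To pass from rational to Cohen--Macaulay I would use Grothendieck duality. Grauert--Riemenschneider vanishing gives $R^i\rho_*\omega_Y=0$ for $i>0$, so $R\rho_*\omega_Y\cong\rho_*\omega_Y$ is concentrated in degree $0$. With $n=\dim X$, duality for $\rho$ reads
\[
(\rho_*\omega_Y)[n]\ \cong\ R\rho_*\,R\mathcal{H}om_Y(\cO_Y,\omega_Y[n])\ \cong\ R\mathcal{H}om_X\!\big(R\rho_*\cO_Y,\omega_X^\bullet\big)\ \cong\ \omega_X^\bullet,
\]
where the last isomorphism uses $R\rho_*\cO_Y\cong\cO_X$ from rationality. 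Hence the dualizing complex $\omega_X^\bullet$ is quasi-isomorphic to a single coherent sheaf placed in one degree, which is exactly the Cohen--Macaulay condition (and at the same time identifies $\omega_X\cong\rho_*\omega_Y$).

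The step I expect to be the main obstacle is the descent in the second paragraph: the direct Kawamata--Viehweg computation only controls the rounded-up sheaf $\cO_Y(D)$, and extracting the vanishing for $\cO_Y$ itself is where the inductive bookkeeping over the exceptional components, and the full force of $a_i>-1$, enters. A secondary caveat is that Grauert--Riemenschneider and Kawamata--Viehweg vanishing, together with the clean form of duality used above, rely on the characteristic-zero hypothesis in force here. As an alternative to the vanishing route one could instead invoke the description of klt singularities as cyclic quotients of index-one canonical singularities acting freely in codimension two, combined with the fact that in characteristic zero a finite quotient of a variety with rational singularities again has rational singularities.
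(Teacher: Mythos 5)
First, a point of comparison: the paper does not prove this statement at all --- Theorem \ref{kltRat} is quoted with a citation to Ishii's book (Theorem 6.2.12) --- so your proposal should be measured against the standard literature argument, whose skeleton you have correctly reproduced: reduction to klt, relative Kawamata--Viehweg vanishing for the round-up $D=\lceil\sum_i a_iE_i\rceil$, and the duality/Grauert--Riemenschneider argument showing that rational implies Cohen--Macaulay. Your first and third paragraphs are correct as they stand. The genuine gap is exactly the step you flag as the main obstacle, and the peeling induction you sketch would not go through: to remove a prime component $E$ from $D$ via $0\to\cO_Y(D-E)\to\cO_Y(D)\to\cO_E(D|_E)\to 0$ you need $R^i(\rho|_E)_*\cO_E(D|_E)=0$ for $i>0$ (plus surjectivity of $\rho_*\cO_Y(D)\to(\rho|_E)_*\cO_E(D|_E)$), and the positivity required to run Kawamata--Viehweg on $E$ is simply not available: writing $\Delta'=D-\sum_i a_iE_i$ for the fractional part, adjunction gives $D|_E-K_E\equiv_\rho -E|_E+\Delta'|_E$, and $-E|_E$ need be neither $(\rho|_E)$-nef nor $(\rho|_E)$-big when the exceptional locus has several components (curves in $E\cap E'$ can meet $E$ positively; the negativity lemma constrains the whole exceptional divisor, not each component, and $\rho|_E$ has positive-dimensional fibers, so relative bigness is not automatic). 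So the induction has no step it can actually take.

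The good news is that no induction is needed: the standard way to close the gap is a splitting argument that uses only the duality computation you already wrote in your third paragraph, run in the opposite logical order. Since $D$ is effective and $\rho$-exceptional and $X$ is normal, the natural composite $\cO_X\to R\rho_*\cO_Y\to R\rho_*\cO_Y(D)$ is an isomorphism (this is your Kawamata--Viehweg vanishing together with $\rho_*\cO_Y(D)=\cO_X$), so $\cO_X$ is a direct summand of $R\rho_*\cO_Y$ in the derived category. Applying $R\mathcal{H}om_X(-,\omega_X^\bullet)$, Grothendieck duality and Grauert--Riemenschneider show that $\omega_X^\bullet$ is a direct summand of $\rho_*\omega_Y[n]$; a summand of a complex concentrated in a single degree is itself concentrated in that degree, so $X$ is Cohen--Macaulay, and $\omega_X$ becomes a direct summand of $\rho_*\omega_Y\subseteq\omega_X$, forcing $\rho_*\omega_Y=\omega_X$. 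Kempf's criterion (rational $\Leftrightarrow$ Cohen--Macaulay and $\rho_*\omega_Y=\omega_X$) then yields rationality --- note that Cohen--Macaulayness is established \emph{before} rationality here, not deduced from it afterwards. Finally, your fallback via the cyclic-quotient description of klt singularities does not avoid the issue: it reduces the theorem to the statement that index-one canonical singularities are rational, which is Elkik's theorem, i.e.\ precisely the same vanishing-plus-descent problem (now with $D=\sum_i a_iE_i$ integral), so the gap reappears there verbatim.
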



\begin{remark}\label{cdvHyp}    Recall that threefold terminal singularities  are isolated; if they are Gorenstein, they are analytic hypersurface singularities, see for example \cite{KollarMori}.  $X$ has at worst canonical singularities and the  index  of  $K_X$ is $1$ if and only if it is {Gorenstein} and rational.
 In particular a  Calabi-Yau variety with canonical singularities is Gorenstein. 

   \end{remark}
  
  Log canonical singularities are  however not necessarily rational; thus we will only consider here varieties with at worst klt singularities.
  
\subsection{Factorial, $\Q$-factorial; the geometric  definitions}\label{tckf}  ~\vskip 0.01in

\noindent The different  notions of  $\Q$-factori\-ality - algebraic, analytic, global and local - are quite delicate. We can consider $X$ as a complex algebraic variety, and also its support  $X^h$ as a complex analytic variety. Sometimes we will consider  complex analytic varieties $X$.  In the following we do not  always distinguish between $X$ and $X^h$, rather we specify if  the relevant objects are algebraic and analytic when necessary. Note that $X$ has rational singularities if and only if $X^h$ has rational singularities. 
\begin{definition}
Let $\WDiv(X)$ be the group of algebraic (analytic) Weil divisors and $\Div(X) $ the group of algebraic (analytic)  Cartier divisors.
\end{definition}

 If $X$ is smooth, then $\WDiv(X)=\Div(X)$. 
 This is true more generally if $X$ is factorial, that is if and only if, $\forall \ x \in X$, the local rings $\cO_{X,x}$ 
  are unique factorization domains.

 \begin{definition}\label{Qglobal}
Let $\WDiv(X)_{\Q}=\WDiv(X) \otimes_{\Z} \Q$  and $\Div_{\Q}(X)= \Div(X) \otimes_{\Z} \Q.$  \\
If $\WDiv(X)_{\Q}=\Div(X)_{\Q}$, $X$ is algebraic $\Q$-{\it factorial}  ({globally analytically}  $\Q$-{\it factorial}), or it has  (analytic)  $\Q$-{\it factorial} singularities.
\end{definition}

Often one says $\Q$-factorial instead of algebraic $\Q$-factorial.  
In the case of hypersurfaces and complete intersection varieties,   various results, starting from Grothendieck's work, relate  factoriality and $\Q$-factoriality,  for example \cite{PolizziRapagnettaSabatino}  and \cite{LinUFD}.
Analytic factoriality implies factoriality, but the converse is not true   \cite[Corollary 2,  p. 41]{Samuel}.
 In particular: 

  \begin{proposition}[Reid-Ue; Corollary 5.1  \cite{Kawamata1988}; \cite{LinUFD}] Let $(\mathcal U,p)$ be a threefold with terminal   singularities of index 1.  Then any $\Q$-Cartier divisor is Cartier, that is $\Q$-factoriality is equivalent to  factoriality.
\end{proposition}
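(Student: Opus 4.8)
The plan is to reduce the statement to the torsion-freeness of the \emph{local} divisor class group and then to obstruct torsion by a covering-space argument. Since the question is local and analytic, I work on a small contractible (Stein) neighborhood $(\mathcal U,p)$ and set $\mathcal U^\circ=\mathcal U\setminus\{p\}$. On a sufficiently small neighborhood every line bundle is trivial, so a Weil divisor is Cartier near $p$ exactly when its class in $\mathrm{Cl}(\mathcal U)$ vanishes, while it is $\Q$-Cartier exactly when its class is \emph{torsion}: if $D$ is $\Q$-Cartier then $mD$ is Cartier for some $m$, hence $\mathcal O_{\mathcal U}(mD)\cong\mathcal O_{\mathcal U}$ after shrinking. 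Thus ``every $\Q$-Cartier divisor is Cartier'' is precisely the assertion that $\mathrm{Cl}(\mathcal U)$ is torsion-free; granting this, $\Q$-factoriality (all classes torsion) forces $\mathrm{Cl}(\mathcal U)=0$, i.e.\ factoriality, and the converse is trivial. So the entire statement follows once $\mathrm{Cl}(\mathcal U)$ is shown to have no torsion.

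Suppose for contradiction that $[D]\in\mathrm{Cl}(\mathcal U)$ has exact order $m>1$, so $\mathcal O_{\mathcal U}(mD)\cong\mathcal O_{\mathcal U}$ while $\mathcal O_{\mathcal U}(iD)\not\cong\mathcal O_{\mathcal U}$ for $0<i<m$. Choosing a nowhere-vanishing section trivializing $\mathcal O_{\mathcal U}(mD)$, I form the cyclic cover
\[
\pi\colon \widetilde{\mathcal U}=\operatorname{Spec}_{\mathcal U}\Big(\bigoplus_{i=0}^{m-1}\mathcal O_{\mathcal U}(-iD)\Big)\longrightarrow \mathcal U .
\]
Because $[D]$ has exact order $m$, $\widetilde{\mathcal U}$ is connected and normal; and because $D$ is Cartier on the smooth locus $\mathcal U^\circ$ and the trivializing section is nowhere zero there, $\pi$ is \emph{\'etale of degree $m$ over $\mathcal U^\circ$}. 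Restricting, $\pi^{-1}(\mathcal U^\circ)\to\mathcal U^\circ$ is a connected $m$-sheeted covering space, determining a surjection $\pi_1(\mathcal U^\circ)\twoheadrightarrow\Z/m$ with $m>1$.

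Now I invoke the hypersurface structure. By Remark~\ref{cdvHyp} a threefold terminal singularity of index $1$ is Gorenstein (terminal singularities are canonical, and canonical of index $1$ is Gorenstein), isolated, and an analytic hypersurface singularity; thus $(\mathcal U,p)$ is analytically $\{f=0\}\subset(\C^4,0)$ with isolated singularity, a compound Du Val point. For an isolated hypersurface singularity of a complex $n$-fold the link $L$ is, by Milnor's connectivity theorem, $(n-2)$-connected; for $n=3$ this gives a \emph{simply connected} $5$-manifold $L$. Since $\mathcal U^\circ$ deformation retracts onto $L$, one has $\pi_1(\mathcal U^\circ)\cong\pi_1(L)=1$, contradicting the surjection onto $\Z/m$ with $m>1$. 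Hence no nonzero torsion class exists and $\mathrm{Cl}(\mathcal U)$ is torsion-free, as required.

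The main obstacle, and the one genuine input, is the topological step: one must verify that the cover attached to a torsion divisor is honestly \'etale over \emph{all} of the punctured neighborhood (which uses that the singularity is isolated, so $p$ has codimension $3$) and that the link of a threefold hypersurface singularity is simply connected. The latter is exactly what makes index $1$ essential, since it is the passage from surface links (only $0$-connected, their nontrivial finite fundamental groups of binary $\T,\D,\E$ type being responsible for the \emph{torsion} in $\mathrm{Cl}$ of Du Val points) to threefold links, which are $1$-connected. Note that the argument kills only torsion and not the free part: for the conifold $L\cong S^2\times S^3$ is simply connected yet $\mathrm{Cl}\cong\Z$, consistently torsion-free but nonzero.
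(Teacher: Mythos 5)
Your argument is correct, but there is nothing in the paper to compare it against line by line: the paper states this proposition purely as a citation (Reid--Ue, Corollary 5.1 of Kawamata, Lin) and gives no proof of its own. What you have reconstructed is essentially the classical argument underlying those references: reduce the statement to torsion-freeness of the local analytic class group, obstruct torsion by the cyclic (index-one-type) cover, and use that an index $1$ terminal threefold point is an isolated compound Du Val, hence hypersurface, singularity in $\C^4$, whose link is simply connected by Milnor's $(n-2)$-connectivity theorem. All the individual steps check out: triviality of $\Pic$ on a small contractible Stein neighborhood, the identification of $\Q$-Cartier with torsion classes, connectedness and \'etaleness of the cover over the punctured neighborhood (using that the singular point has codimension $3$), and the retraction of $\mathcal U\setminus\{p\}$ onto the link. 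Two remarks. First, the final step can be run instead through Flenner's theorem, which this paper does use elsewhere (proof of Theorem \ref{kltRHMT}): for rational singularities $\cl(\cO^h_{\mathcal U,p})\cong H^2(L_p,\Z)$, and by universal coefficients the torsion of $H^2(L_p,\Z)$ equals the torsion of $H_1(L_p,\Z)$, which vanishes once $\pi_1(L_p)=1$; this bypasses the covering-space construction entirely and is closer in spirit to the paper's toolkit. Second, your proof is purely analytic, while the paper also applies the proposition in the algebraic setting (e.g.\ to conclude that a $\Q$-factorial terminal Calabi--Yau threefold is factorial); to deduce the algebraic germ statement from the analytic one you should add that $\cl(\cO_{X,p})\to\cl(\cO^h_{X,p})$ is injective (descent of principality along the faithfully flat completion, cf.\ Fossum), so torsion-freeness passes back to the algebraic local ring. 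That is a one-line patch, not a gap in the core argument.
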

Thus a Calabi-Yau threefold with $\Q$-factorial terminal singularities is factorial (Remark \ref{cdvHyp}).
 We also consider properties of  the germ  $(\mathcal U, p)$:
\begin{definition}
 $(\mathcal U, p)$ is    algebraic (analytically) $\mathbb Q$-factorial if every algebraic (analytic) Weil divisor in a neighborhood of $p$ (Euclidean in the analytic case) is $\mathbb Q$-Cartier. 
 \end{definition}
 
 \begin{definition}\label{laqf} $X$ is locally (analytically) $\Q$-factorial if  $(\mathcal U, p)$ is    algebraic (analytically) $\mathbb Q$-factorial, for every $ p \in X$ and any open set $\mathcal U$.
 \end{definition}

In the analytic case global does not  imply local necessarily, since there can be Weil divisors in  $(\mathcal U, p)$ which do not extend to Weil divisors in $X$. 
In general,  algebraic $\Q$-factoriality does not imply analytic $\Q$-factoriality; it does so if $X$ is projective algebraic, by Chow's Theorem. 
  \begin{ex}[\cite{Kawamata1988}\label{exfacnotQfac}, page 104, and \cite{IshiiLibro}]  $(\mathcal U, 0)=(xy+ zw+ z^3+w^3=0, 0)$ is $\Q$-factorial, actually factorial, but not locally analytically $\Q$-factorial, since the local analytic equation in $(\mathcal U, 0)$ is $z_1z_2+ z_3z_4=0$. 
\end{ex}
 When the isolated singularity is {toric}, the singularity is $\Q$-factorial if and only if the maximal cone corresponding to the toric singular point is { simplicial}. More generally a variety with orbifold singularities is $\Q$-factorial.

Following Kawamata we make
\begin{definition}
\begin{enumerate}
\item $ \sigma(X)=\dim (\WDiv(X)_{\Q} / \Div(X)_{\Q})$.
\item $ \sigma((\mathcal U, p))=\dim (\WDiv((\mathcal U, p))_{\Q} / \Div((\mathcal U,  p))_{\Q})$.
\end{enumerate}
\end{definition}

\begin{remark}\label{sigmaandQf} $X$ (respectively, $(\mathcal U, p)$) is   $\Q$-factorial if and only  if   $\sigma (X)=0$ (respectively, $\sigma (\mathcal U, p)=0$), that is  if and only if $\WDiv(X)/ \Div(X)$ (respectively, $\WDiv(\cU, p))/ \Div(\cU, p)$) is torsion. 
 \end{remark}

In Example  \ref{exfacnotQfac} $ \sigma((\mathcal U^h, 0))=1$.

 \begin{proposition}[Kawamata \cite{Kawamata1988}, Lemma 1.2]  Let $X$ be an algebraic variety.
 If $X$ has rational singularities, then 
$ \sigma(X)$
 is finite.   \end{proposition}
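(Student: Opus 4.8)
The plan is to reinterpret $\sigma(X)$ as the rank of a cohomologically controlled abelian group and then to dominate it by a finitely generated group extracted from a resolution. Since every principal divisor is Cartier, passing to linear equivalence in both $\WDiv(X)$ and $\Div(X)$ identifies $\WDiv(X)/\Div(X)$ with $\operatorname{Cl}(X)/\Pic(X)$, where $\operatorname{Cl}(X)=\WDiv(X)/{\sim}$ is the class group and $\Pic(X)=\Div(X)/{\sim}$. Because tensoring with $\Q$ is exact, $\sigma(X)=\dim_\Q\big((\operatorname{Cl}(X)/\Pic(X))\otimes_\Z\Q\big)$ is exactly the rank of $\operatorname{Cl}(X)/\Pic(X)$. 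So it suffices to prove that $\operatorname{Cl}(X)/\Pic(X)$ is finitely generated.

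First I would fix a resolution $f\colon Y\to X$ with $Y$ smooth. Since $X$ is normal we have $f_*\cO_Y=\cO_X$, and taking strict transforms of prime divisors shows that the proper push-forward $f_*\colon\operatorname{Cl}(Y)=\Pic(Y)\to\operatorname{Cl}(X)$ is surjective. Moreover the composite $f_*\circ f^*$ is the natural map $\Pic(X)\to\operatorname{Cl}(X)$ (by $f_*\cO_Y=\cO_X$ and the projection formula), so the image of $\Pic(X)$ in $\operatorname{Cl}(X)$ equals $f_*\big(f^*\Pic(X)\big)$. Surjectivity of $f_*$ then produces an induced surjection $\Pic(Y)/f^*\Pic(X)\twoheadrightarrow\operatorname{Cl}(X)/\Pic(X)$, and it is enough to show that $\Pic(Y)/f^*\Pic(X)$ is finitely generated.

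Next I would bound $\Pic(Y)/f^*\Pic(X)$ cohomologically. The five-term exact sequence of the Leray spectral sequence for $\cO_Y^*$ along $f$, together with $f_*\cO_Y^*=\cO_X^*$, gives
\begin{equation*}
0\to\Pic(X)\xrightarrow{\,f^*\,}\Pic(Y)\to H^0(X,R^1f_*\cO_Y^*),
\end{equation*}
hence an injection $\Pic(Y)/f^*\Pic(X)\hookrightarrow H^0(X,R^1f_*\cO_Y^*)$. This is where the rational-singularities hypothesis enters decisively: by definition $R^1f_*\cO_Y=0$, so the exponential sequence $0\to\Z(1)\to\cO_Y\to\cO_Y^*\to 0$ (or the Kummer sequence étale-locally) yields an injection $R^1f_*\cO_Y^*\hookrightarrow R^2f_*\Z(1)$. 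The sheaf $R^2f_*\Z$ is constructible with finitely generated stalks and is supported on the image of the exceptional locus, a proper closed subset; therefore $H^0(X,R^2f_*\Z)$, and a fortiori $H^0(X,R^1f_*\cO_Y^*)$, is a finitely generated abelian group.

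Assembling the three steps, $\operatorname{Cl}(X)/\Pic(X)$ is a quotient of $\Pic(Y)/f^*\Pic(X)$, which is a subgroup of a finitely generated group and hence finitely generated; thus $\operatorname{Cl}(X)/\Pic(X)$ is finitely generated and $\sigma(X)$, being its rank, is finite. I expect the main obstacle to be the last step: one must verify carefully that the vanishing $R^1f_*\cO_Y=0$ genuinely collapses the relative Picard sheaf into the lattice sheaf $R^2f_*\Z$, and that constructibility with finitely generated stalks forces $H^0$ to be finitely generated. One must also keep track of analytic versus algebraic Picard groups (harmless when $X$ is projective by GAGA, and otherwise handled by working analytically or étale-locally); the identifications in the first two steps are the standard behaviour of class groups under proper birational morphisms of normal varieties.
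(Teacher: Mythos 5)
The paper does not actually prove this statement: it is quoted, without proof, as Lemma~1.2 of Kawamata \cite{Kawamata1988}, so there is no internal argument to compare yours against. Your proposal is in substance a correct reconstruction of the standard (Kawamata-style) proof, and every structural step is right: $\sigma(X)$ is the rank of $\cl(X)/\Pic(X)$; cycle pushforward along a resolution $f\colon Y\to X$ gives a surjection $\Pic(Y)=\cl(Y)\to\cl(X)$ with $f_*f^*=\mathrm{id}$, hence a surjection $\Pic(Y)/f^*\Pic(X)\twoheadrightarrow \cl(X)/\Pic(X)$; the Leray five-term sequence together with $f_*\cO_Y^*=\cO_X^*$ embeds $\Pic(Y)/f^*\Pic(X)$ into $H^0(X,R^1f_*\cO_Y^*)$; the vanishing $R^1f_*\cO_Y=0$ makes the exponential sequence embed $R^1f_*\cO_Y^*$ into $R^2f_*\Z$; and $H^0$ of a constructible sheaf with finitely generated stalks is finitely generated, since a section is determined by its values at one point in each connected component of each stratum.

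The only place where your write-up is genuinely incomplete is the point you flag but dismiss too quickly: the algebraic/analytic mismatch is not mere bookkeeping, because the exponential sequence and $R^2f_*\Z$ live in the analytic topology, while $\Pic$ and $\cl$ in the statement are algebraic and $X$ is not assumed projective, so GAGA is unavailable. Two standard facts close the gap. First, Grauert's comparison theorem for the proper map $f$ gives $R^1f^{an}_*\cO_{Y^{an}}=(R^1f_*\cO_Y)^{an}=0$ and $f^{an}_*\cO_{Y^{an}}=\cO_{X^{an}}$, so your Leray and exponential-sequence steps are valid analytically. Second, the kernel of the resulting map $\Pic(Y)\to H^0(X^{an},R^1f^{an}_*\cO^{*}_{Y^{an}})$ is exactly $f^*\Pic(X)$: if $L^{an}\cong (f^{an})^*M$ for an analytic line bundle $M$ on $X^{an}$, then $M\cong (f_*L)^{an}$ by the projection formula and Grauert comparison; since $\cO_{X,x}\to\cO^{an}_{X,x}$ is faithfully flat, $f_*L$ is then an algebraic line bundle, and the natural map $f^*f_*L\to L$, whose analytification is an isomorphism, is itself an isomorphism. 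With these two insertions your chain of inclusions closes up over the algebraic groups you started with, $\cl(X)/\Pic(X)$ is finitely generated, and $\sigma(X)$ is finite. A minor quibble: the fact that $R^2f_*\Z$ is supported on a proper closed subset is not what yields finite generation of $H^0$; constructibility with finitely many strata is what does the work.
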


 \begin{proposition}[Kawamata \cite{Kawamata1988}, Lemma 1.12]\label{ratFin}  Let  $(\mathcal U, p)$ analytic.  If  $(\mathcal U, p)$  has rational singularities, then $
  \sigma((\mathcal U,p))$
  is finite.   \end{proposition}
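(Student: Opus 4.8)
The plan is to compute $\sigma((\mathcal U,p))$ on a resolution and to let the rational singularity hypothesis force the relevant Picard group to be finitely generated. First I would fix a small relatively compact Stein representative of the germ, still denoted $(\mathcal U,p)$, and choose a resolution $\rho\colon Y\to\mathcal U$ with exceptional prime divisors $E_1,\dots,E_n$. Since $\overline{\mathcal U}$ is compact and $\rho$ is proper, there are only finitely many $E_i$; choosing $\rho$ to be a good resolution, $Y$ deformation retracts onto the compact exceptional set $E=\bigcup_i E_i$, so $H^2(Y,\Z)\cong H^2(E,\Z)$ is a finitely generated abelian group.

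Next I would identify the local class group with that of the resolution. Each $E_i$ maps to a subset of $\mathcal U$ of codimension at least two, and $\rho$ restricts to an isomorphism over the complement of $\rho(E)$, which has codimension at least two in $\mathcal U$. Normality of $\mathcal U$ then gives $\operatorname{Cl}(\mathcal U,p)\cong\operatorname{Cl}(\mathcal U\setminus\rho(E))\cong\operatorname{Cl}(Y\setminus E)$, and since $Y$ is smooth the localization sequence $\bigoplus_i\Z[E_i]\to\Pic(Y)\to\operatorname{Cl}(Y\setminus E)\to 0$ yields $\operatorname{Cl}(\mathcal U,p)\cong\Pic(Y)/\langle[E_1],\dots,[E_n]\rangle$. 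Because $\WDiv(\mathcal U,p)/\Div(\mathcal U,p)\cong\operatorname{Cl}(\mathcal U,p)/\Pic(\mathcal U,p)$, it suffices to prove that $\operatorname{Cl}(\mathcal U,p)$ has finite rank, and for this it is enough that $\Pic(Y)$ be finitely generated.

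The rational singularity hypothesis enters exactly at this last point. Since $\mathcal U$ is normal we have $\rho_*\cO_Y=\cO_{\mathcal U}$, and since $(\mathcal U,p)$ has rational singularities we have $R^1\rho_*\cO_Y=0$. Feeding these into the Leray spectral sequence of $\rho$, together with $H^1(\mathcal U,\cO_{\mathcal U})=0$ (Cartan's Theorem B on the Stein space $\mathcal U$), gives $H^1(Y,\cO_Y)=0$. The exponential sequence $H^1(Y,\cO_Y)\to\Pic(Y)\to H^2(Y,\Z)$ then shows that $\Pic(Y)$ injects into the finitely generated group $H^2(Y,\Z)$, hence is itself finitely generated. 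Consequently $\operatorname{Cl}(\mathcal U,p)$, being a quotient of $\Pic(Y)$, is finitely generated, so $\operatorname{Cl}(\mathcal U,p)/\Pic(\mathcal U,p)$ has finite rank and $\sigma((\mathcal U,p))=\dim_\Q\bigl(\WDiv(\mathcal U,p)_\Q/\Div(\mathcal U,p)_\Q\bigr)$ is finite.

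I expect the main obstacle to be analytic rather than conceptual: one must justify the finiteness of the number of exceptional components and the finite generation of $H^2(Y,\Z)$ for a germ whose singular locus need not be isolated, and one must check that the class group computed on a neighborhood genuinely represents the local group at $p$. The essential vanishing is $H^1(Y,\cO_Y)=0$, which is where rationality is indispensable: without it the continuous part of $\Pic(Y)$ (equivalently $H^1(Y,\cO_Y)$) can be positive-dimensional and $\sigma$ infinite, as already happens for cones over abelian varieties.
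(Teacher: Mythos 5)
Your overall strategy --- resolve, use rationality plus Steinness and the Leray sequence to get $H^1(Y,\cO_Y)=0$, embed $\Pic(Y)$ into topological $H^2$ via the exponential sequence, and identify the local class group with $\Pic(Y)$ modulo the exceptional divisor classes --- is exactly the mechanism behind the cited result (the paper itself gives no proof; it quotes Kawamata's Lemma 1.12), and for an \emph{isolated} singularity your sketch can be completed. But as a proof of the proposition as stated there are two genuine gaps. First, the statement carries no isolatedness hypothesis. If $\mathrm{Sing}(\mathcal U)$ is positive-dimensional, then some exceptional prime divisors necessarily dominate it (over the generic point of a singular curve the fibers of $\rho$ are positive-dimensional, by Zariski's main theorem and smoothness of $Y$), so $E$ is a closed \emph{non-compact} subset of $Y$; the assertion that ``$Y$ deformation retracts onto the compact exceptional set $E$'' is then false, and with it your only route to finite generation of $H^2(Y,\Z)$. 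Second, $\sigma((\mathcal U,p))$ is an invariant of the germ, i.e.\ of the direct limit $\varinjlim_{\mathcal U'\ni p}\operatorname{Cl}(\mathcal U')$ over shrinking representatives; the restriction maps in this system are neither injective nor surjective, so finiteness of $\operatorname{Cl}(\mathcal U)/\Pic(\mathcal U)$ for one fixed representative does not bound the limit. You flag both points as ``obstacles,'' but flagging them is not the same as closing them, and the second is where the statement actually lives.

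Both gaps are repaired simultaneously by replacing cohomology of a fixed representative with stalks of higher direct images, which is Kawamata's device. Rationality gives the vanishing of the sheaf $R^1\rho_*\cO_Y$ near $p$, so for small Stein $\mathcal U'$ the Leray sequence yields $H^1(\rho^{-1}(\mathcal U'),\cO)=0$ and hence $\Pic(\rho^{-1}(\mathcal U'))\hookrightarrow H^2(\rho^{-1}(\mathcal U'),\Z)$; and by proper base change $\varinjlim H^2(\rho^{-1}(\mathcal U'),\Z)\cong (R^2\rho_*\Z_Y)_p\cong H^2(\rho^{-1}(p),\Z)$, which is finitely generated because the \emph{fiber} $\rho^{-1}(p)$ is a compact analytic set --- this is the correct substitute for your compact exceptional set, and it exists whether or not the singularity is isolated. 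Passing to the limit, $\varinjlim\Pic(\rho^{-1}(\mathcal U'))$ injects into $H^2(\rho^{-1}(p),\Z)$, your identification of the class group with $\Pic$ modulo the (finitely many) exceptional classes survives verbatim in the limit, and therefore the germ class group is finitely generated modulo those classes, so $\sigma((\mathcal U,p))$ is finite. One smaller repair: the surjectivity of $\Pic(Y)\to\operatorname{Cl}(Y\setminus E)$ cannot be obtained from Remmert--Stein applied on $Y$, since $E$ has the same dimension as the divisors being extended; instead extend divisors on $\mathcal U$ across the codimension-two set $\rho(E)$ and take strict transforms, which are analytic as unions of irreducible components of $\rho^{-1}$ of the extended divisor.
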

 
 \smallskip
In particular we  use Property \ref{ratFin} in Theorem \ref{Klqf}.
In  Section \ref{FQFAF} we study $\Q$-factoriality and local analytic $\Q$-factoriality by analyzing  properties of the local class group and the local Picard group.

  \begin{definition}[The divisor class group, the  local divisor class group]  \hfill \\
  The Divisor Class group ${\mathcal Cl}(X)$ is the group of Weil divisors modulo the principal divisors. \\
The Picard group $\Pic(X)$ is the group of Cartier divisors modulo the principal divisors.  \\
The local (analytic) divisor class group,  ${\mathcal Cl}(\cO_{X,x})$ and  ${\mathcal Cl}(\cO^h_{X,x})$, is  geometrically the group of  local Weil divisors modulo the principal divisors.
  \end{definition}
  The following  results \ref{hartshorne1}, \ref{hartshorne2}  hold both in the algebraic and analytic settings \cite{Hartshorne}:
  
\begin{proposition}\label{hartshorne1}  ${\mathcal Cl}(\cO_{X,x})= \lim \limits_{\rightarrow} \Pic (\mathcal V)$, where the limit is taken over the set of $\mathcal V \subset \Spec (\cO_{X,x})$ such that ${\cO_z}$ is factorial $\forall \  z  \in  {\mathcal V}$.

 ${\mathcal Cl}(\cO^h_{X,x})= \lim \limits_{\rightarrow} \Pic (\mathcal V)$, where the limit is taken over the set of $\mathcal V \subset \Spec (\cO^h_{X,x})$ such that ${\cO_z}$ is factorial $\forall \  z  \in  {\mathcal V}$.
\end{proposition}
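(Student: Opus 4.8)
The plan is to reduce both equalities to two classical facts: that on a locally factorial scheme every Weil divisor is Cartier, so that $\Pic$ and $\mathcal{Cl}$ coincide, and that deleting a closed set of codimension $\geq 2$ does not change the divisor class group. Write $A=\cO_{X,x}$ (resp.\ $A=\cO^h_{X,x}$) and $S=\Spec A$, a normal Noetherian integral local scheme with closed point $x$. For each open $\cV\subseteq S$ occurring in the limit, i.e.\ with $\cO_z$ factorial for all $z\in\cV$, the scheme $\cV$ is Noetherian, integral, separated and locally factorial, so the natural map $\Pic(\cV)\to\mathcal{Cl}(\cV)$ is an isomorphism \cite{Hartshorne}. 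Thus the proposition is equivalent to showing that $\varinjlim_{\cV}\mathcal{Cl}(\cV)\cong\mathcal{Cl}(A)$, with the restriction maps as transition maps, and the argument is identical in the algebraic and analytic settings.

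First I would pin down the controlling index family. Since $A$ is normal, every local ring of codimension $\leq 1$ is either a field or a discrete valuation ring, hence factorial; equivalently, the non-factorial locus $Z\subset S$ has $\codim(Z,S)\geq 2$. In particular the regular locus $U_{\mathrm{reg}}=S\setminus\mathrm{Sing}\,S$ is itself a member of the family, being locally factorial by the Auslander--Buchsbaum theorem, and its complement has codimension $\geq 2$ by normality. These large opens --- the locally factorial $\cV$ whose complement has codimension $\geq 2$ --- are stable under finite intersection and contain every codimension-one point of $S$, so they form a cofinal directed subfamily.

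The heart of the argument is then the excision sequence for class groups: for $\cV$ with $\codim(S\setminus\cV,S)\ge 2$ the restriction $\mathcal{Cl}(S)\to\mathcal{Cl}(\cV)$ is an isomorphism, since its kernel is generated by the classes of the prime divisors contained in $S\setminus\cV$, of which there are none \cite{Hartshorne}. Hence on this cofinal subfamily the transition maps are isomorphisms and $\mathcal{Cl}(\cV)\cong\mathcal{Cl}(S)=\mathcal{Cl}(A)$ for each such $\cV$, so the direct limit stabilises to $\mathcal{Cl}(A)$; combined with $\Pic(\cV)\cong\mathcal{Cl}(\cV)$ this gives the first equality. For the analytic statement I would run the identical argument with $A=\cO^h_{X,x}$, which is again a normal Noetherian local domain: codimension-one analytic local rings are discrete valuation rings, and --- as recorded just before the statement --- both the identification $\Pic\cong\mathcal{Cl}$ on locally factorial spaces and the excision sequence hold verbatim in the analytic category \cite{Hartshorne}.

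The step I expect to require the most care is the bookkeeping of the direct limit itself: one must check that it is precisely the locally factorial opens with codimension-$\ge 2$ complement (for instance $U_{\mathrm{reg}}$, or the full factorial locus when it is open) that control the colimit, rather than the smaller locally factorial opens obtained by deleting a prime divisor, on which a given class can become trivial. Isolating this cofinal subfamily and verifying that the restriction maps between its members are genuine isomorphisms --- equivalently, that no codimension-one information is discarded --- is exactly what forces the limit to equal, rather than be a proper quotient of, $\mathcal{Cl}(A)$. Once the normality and excision inputs are granted, the transcription to the analytic germ is routine.
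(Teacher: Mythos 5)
The paper itself gives no argument for this proposition --- it is quoted from the reference cited just above it (``The following results \ref{hartshorne1}, \ref{hartshorne2} hold both in the algebraic and analytic settings'') --- so your proposal stands on its own merits. Its two ingredients are exactly the right ones: on a Noetherian, integral, separated, locally factorial scheme the natural map $\Pic \to \cl$ is an isomorphism, and deleting a closed set of codimension $\geq 2$ does not change the class group. Your analysis of the subfamily of locally factorial opens $\cV \subseteq \Spec(\cO_{X,x})$ with $\codim\bigl(\Spec(\cO_{X,x}) \setminus \cV\bigr) \geq 2$ is also correct: for each such $\cV$ one has $\Pic(\cV) \cong \cl(\cV) \cong \cl(\cO_{X,x})$, and all restriction maps among these opens are isomorphisms.

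The genuine gap is the cofinality claim, and it is not a delicate point that remains ``to be checked'' --- it is false. For the restriction maps to be the transition maps of a directed system, the index family must be ordered by \emph{reverse} inclusion; a subfamily is then cofinal only if every locally factorial open \emph{contains} a member of it. But an open such as $U_{\mathrm{reg}} \setminus D$, with $D$ a prime divisor and $U_{\mathrm{reg}}$ the regular locus, contains no open whose complement in $\Spec(\cO_{X,x})$ has codimension $\geq 2$: it is the small opens with divisorial complement that are cofinal, not your large ones. Consequently, over the family as literally indexed, the colimit is zero whenever $\cl(\cO_{X,x}) \neq 0$, because any class is represented by a Weil divisor $D$ on some $\cV$ and restricts to $0$ on $\cV$ minus the support of $D$, which again lies in the family. (Concretely, for the quadric cone $\cl(\cO_{X,x}) = \Z/2$, generated by a line $L$, and $[L]$ dies on $U_{\mathrm{reg}} \setminus L$.) The repair is not to verify cofinality but to read the index family as in the cited source: opens $\cV$ that contain every codimension-one point of $\Spec(\cO_{X,x})$ --- equivalently, whose complement has codimension $\geq 2$ --- and are locally factorial, the regular locus being the basic example. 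On that corrected family your excision argument is already a complete proof, in both the algebraic and analytic settings; as written, however, your proposal flags the decisive issue in its final paragraph without resolving it, and no argument can resolve it for the literal index family.
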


 \begin{corollary}\label{hartshorne2} Let  $\mathcal U$ be an  open set and  $Z \subset \mathcal U$ proper, closed, with $\codim Z \geq 2$. Then
 \bea {\cl} ({\mathcal U}) \simeq \cl ({\mathcal U} \setminus Z).
 \eea
 \end{corollary}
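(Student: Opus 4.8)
The plan is to prove the statement first at the level of Weil divisors and then descend to class groups, following the standard excision argument (as in Hartshorne). Write $U' = \mathcal U \setminus Z$ and recall that, $\mathcal U$ being an open subset of the normal variety $X$, it is regular in codimension one; hence $\cl$ is the group of codimension-one cycles modulo principal divisors, with the same description in the analytic setting.

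First I would set up a bijection between prime divisors. Since $\codim Z \geq 2$, no prime divisor $Y \subset \mathcal U$ can be contained in $Z$: otherwise $\codim_{\mathcal U} Y \geq \codim_{\mathcal U} Z \geq 2$, contradicting $\codim Y = 1$. Hence $Y \cap U'$ is a nonempty dense open subset of $Y$, irreducible and of codimension one in $U'$, and $\overline{Y \cap U'} = Y$ (closure taken in $\mathcal U$). Conversely, for a prime divisor $Y' \subset U'$ its closure $\overline{Y'}$ in $\mathcal U$ is irreducible of codimension one and satisfies $\overline{Y'} \cap U' = Y'$. These two assignments are mutually inverse, so restriction induces an isomorphism of the free abelian groups $\WDiv(\mathcal U) \xrightarrow{\sim} \WDiv(U')$.

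It remains to check that this isomorphism carries principal divisors to principal divisors, whence it descends to $\cl(\mathcal U) \xrightarrow{\sim} \cl(U')$. A rational (resp.\ meromorphic) function $f$ on $\mathcal U$ restricts to one on $U'$, and the order $v_Y(f)$ along a prime divisor $Y$ is computed at the codimension-one local ring $\cO_{\mathcal U, \eta_Y}$, which coincides with $\cO_{U', \eta_{Y \cap U'}}$; thus $\operatorname{div}(f)|_{U'} = \operatorname{div}(f|_{U'})$, and principal divisors match under the bijection. For surjectivity onto principal divisors one uses that every rational/meromorphic function on $U'$ extends uniquely to $\mathcal U$.

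The main point to handle with care is precisely this extension in the analytic category: a meromorphic function defined on $U' = \mathcal U \setminus Z$ with $\codim Z \geq 2$ extends across $Z$ because $\mathcal U$ is normal, hence satisfies Serre's condition $S_2$ and a Riemann/Hartogs-type extension theorem applies; in the algebraic category the function fields of $\mathcal U$ and $U'$ are literally equal since $U'$ is dense open. With this, principal divisors correspond in both directions and the induced map on class groups is an isomorphism, proving $\cl(\mathcal U) \simeq \cl(U')$. Alternatively, in the local setting one may pass to the colimit in Proposition \ref{hartshorne1}, but the divisor-theoretic argument above is the most transparent and applies verbatim to both $\cO_{X,x}$ and $\cO^h_{X,x}$.
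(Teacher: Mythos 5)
Your proof is correct in substance, and it is the standard excision argument. The paper itself offers no proof of this Corollary: it simply attributes it (together with Proposition \ref{hartshorne1}) to \cite{Hartshorne}, and your argument is essentially the one found in that reference. Since $\codim Z \geq 2$, no prime divisor of $\mathcal U$ lies inside $Z$, so restriction gives an isomorphism $\WDiv(\mathcal U) \to \WDiv(\mathcal U \setminus Z)$; principal divisors correspond because the rational (meromorphic) functions on the two spaces agree and orders of vanishing are computed at codimension-one points, all of which lie away from $Z$.

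The one step you should tighten is the converse direction of your prime-divisor bijection in the \emph{analytic} category. In the algebraic setting the Zariski closure of an irreducible codimension-one subvariety of $\mathcal U \setminus Z$ is automatically an irreducible codimension-one subvariety of $\mathcal U$, but in the analytic setting the topological closure of an analytic hypersurface of $\mathcal U \setminus Z$ is not analytic in $\mathcal U$ for free: this requires the Remmert--Stein extension theorem, which applies here precisely because $\dim Z \leq \dim \mathcal U - 2 < \dim \mathcal U - 1$. You were careful about the analogous subtlety for functions (Riemann/Hartogs-type extension on a normal space) but silently assumed closure-is-analytic for divisors. With Remmert--Stein cited, both directions of the bijection hold and your argument goes through verbatim in both categories, which matters for the paper since it applies the Corollary to analytic germs, e.g.\ to identify $\Pic(\mathcal U \setminus p)$ with $\cl(\cO_{\mathcal U,p})$ for isolated singularities.
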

If $Z$ is the singular locus of $X$, then $Pic(\mathcal U \setminus Z) \simeq  {\cl} ({\mathcal U}) $.
If $\dim \mathcal U \geq 3$ and $ p \in \mathcal U$ an isolated singularity,  $\Pic (\mathcal U \setminus p)  \simeq {\cl} (\cO_{\mathcal U,p})$.
\begin{definition}\label{localPic} $\Pic (\mathcal U \setminus p)$ is usually called the local Picard group,  but the notation in the literature is not uniform; we refer to \cite{KollarLocPic}. 
In the following Section \ref{FQFAF} we review equivalent definitions and prove some properties.
\end{definition}

\section{ Almost factoriality and other properties; $\Q$-factorializations }\label{FQFAF}
\subsection{The algebraic definition}
 Let $A$ be a local noetherian normal domain.
\begin{definition}[The local divisor class group]  ${\mathcal Cl}(A)$ is the quotient of the divisorial ideals $\WDiv(A)$ modulo the principal ideals in $A$; ${\mathcal \Pic }(A)$ is the quotient of the invertible ideals $\Div(A)$ modulo the principal ideals in $A$. \end{definition}

The divisorial ideals are the  Weil divisors in $\mathcal U= {\rm Spec} (A)$.  We will apply the  definitions and results stated below  to the local ring $\cO_{X,x}=A$ and ${\rm Spec}(\cO_{X,x})=\mathcal U$.
Recall that $A$ is factorial if and only if it is a unique factorization domain.
In the algebra literature, see for example \cite{Fossum}, $\Q$-factorial is referred to as almost factorial:

  \begin{definition} $A$ is almost factorial (respectively factorial) if  ${\mathcal Cl }( A)$ is torsion (respectively zero).
  \end{definition}
  
\begin{proposition}\label{BerltramettiOdettiFossumStorch} The local class group   ${\mathcal Cl}(A)$ is torsion (zero)  if and only if \\
$ \WDiv(A) / \Div(A)$  is torsion (respectively $\WDiv(A) / \Div(A)=0$).
\end{proposition}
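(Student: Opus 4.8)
The plan is to exhibit ${\mathcal Cl}(A)$ and $\WDiv(A)/\Div(A)$ as terms of a single short exact sequence and then to collapse that sequence. Writing $\mathrm{Prin}(A)$ for the subgroup of principal ideals, the two defining descriptions read ${\mathcal Cl}(A)=\WDiv(A)/\mathrm{Prin}(A)$ and ${\Pic}(A)=\Div(A)/\mathrm{Prin}(A)$, and the inclusions $\mathrm{Prin}(A)\subseteq\Div(A)\subseteq\WDiv(A)$ hold because every principal ideal is invertible and every invertible ideal is reflexive, hence divisorial. First I would record the resulting exact sequence
\[
0 \longrightarrow {\Pic}(A) \longrightarrow {\mathcal Cl}(A) \longrightarrow \WDiv(A)/\Div(A) \longrightarrow 0,
\]
which follows from the third isomorphism theorem applied to the nested subgroups above.

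The key step, and the only substantive one, is to show that ${\Pic}(A)=0$, i.e. that over the local ring $A$ every invertible ideal is principal. I would argue directly: if $I$ is invertible then $I\cdot I^{-1}=A$, so $1=\sum_i a_i x_i$ with $a_i\in I$ and $x_i\in I^{-1}$. Since $A$ is local with maximal ideal $\mathfrak m$ and $1\notin\mathfrak m$, at least one product $a_j x_j$ must be a unit $u$. For an arbitrary $a\in I$ one has $a x_j\in I\cdot I^{-1}=A$, whence $a=(a x_j u^{-1})a_j\in (a_j)$, so that $I=(a_j)$ is principal. Equivalently, I could invoke the standard fact that a finitely generated invertible ideal is a rank-one projective module, hence free over the local ring $A$, hence principal.

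With ${\Pic}(A)=0$ the exact sequence degenerates to a canonical isomorphism ${\mathcal Cl}(A)\cong\WDiv(A)/\Div(A)$, and the proposition is then immediate, since isomorphic abelian groups are simultaneously torsion, respectively simultaneously zero. I expect the whole difficulty to be concentrated in the vanishing of ${\Pic}(A)$: once local-ness is used to eliminate the discrepancy between Cartier and principal divisors, both the torsion statement and the factoriality statement are formal consequences of the isomorphism.
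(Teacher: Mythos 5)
Your proof is correct, and it is genuinely more self-contained than what the paper does: the paper proves this proposition purely by citation to Beltrametti--Odetti, Fossum, and Storch, outsourcing exactly the content you supply. Your decomposition via the chain $\mathrm{Prin}(A)\subseteq\Div(A)\subseteq\WDiv(A)$ and the resulting exact sequence $0 \to \Pic(A) \to {\mathcal Cl}(A) \to \WDiv(A)/\Div(A) \to 0$ is the standard mechanism behind those references (it appears, in the more general setting of Krull domains, in Fossum's book), and your identification of the single substantive input --- the vanishing of $\Pic(A)$ for $A$ local, i.e.\ that invertible fractional ideals over a local ring are principal --- is exactly right; your direct argument with $1=\sum_i a_i x_i$ and the unit $a_jx_j$ is sound, since $a_ix_i\in I\cdot I^{-1}=A$ for each $i$ and $A$ local forces some summand out of $\mathfrak m$. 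Two small points worth making explicit in a careful write-up: first, that the divisorial (fractional) ideals of a noetherian normal domain form a group under the reflexive product $I*J=(IJ)^{\vee\vee}$, and that this product restricts to the ordinary one on invertible ideals (because a product of invertibles is already reflexive), so that $\Div(A)$ really is a subgroup of $\WDiv(A)$ and the third isomorphism theorem applies; second, that invertible ideals are reflexive, hence divisorial, which justifies the middle inclusion. What your route buys is a two-line reduction of both the torsion and the zero statements to a canonical isomorphism ${\mathcal Cl}(A)\cong\WDiv(A)/\Div(A)$; what the paper's citation buys is brevity and the greater generality of the Krull-domain framework in which these facts are usually established.
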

\begin{proof}  It follows from   \cite{BeltramettiOdetti, Fossum, Storch}.
\end{proof}

Equivalently, $A$ is almost factorial (respectively factorial) if and only if $ \WDiv(A)_{\Q} / \Div(A)_{\Q}$  is torsion  (respectively $\WDiv(A) / \Div(A)=0$).

\subsection{Geometry}
In the case of  isolated singularities one can  directly prove
 \begin{proposition}\label{localanclassgroup} Let  $(\mathcal U, p)$ be an analytic contractible germ 
  open set and $ p \in \mathcal U$ an isolated singularity. Then
 \bea {\cl} (\cO^h_{\mathcal U, p}) \simeq \Pic (\mathcal U \setminus p) \simeq \WDiv(\mathcal (\mathcal U, p)) / \Div(\mathcal (\mathcal U, p)) \,.
 \eea
 \end{proposition}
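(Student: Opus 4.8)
The plan is to route both isomorphisms through the local analytic class group $\cl(\cO^h_{\mathcal U,p})$, after fixing a convenient representative of the germ. Since $p$ is an isolated singularity, shrinking $\mathcal U$ if necessary I may assume it is a Stein, contractible representative (e.g.\ a Milnor-type neighborhood, which exists because a small ball meets an analytic set in a Stein, contractible set), so that Cartan's Theorem~B is available. For the first isomorphism $\cl(\cO^h_{\mathcal U,p})\simeq\Pic(\mathcal U\setminus p)$ I would invoke the analytic form of the identity recorded immediately after Corollary~\ref{hartshorne2}: because $p$ is isolated, $\mathcal U\setminus p$ is a complex manifold, hence $\cl(\mathcal U\setminus p)=\Pic(\mathcal U\setminus p)$; because $\{p\}$ is closed of codimension $\ge 2$ (here $\dim\mathcal U\ge 3$), Corollary~\ref{hartshorne2} gives $\cl(\mathcal U)\simeq\cl(\mathcal U\setminus p)$; and Proposition~\ref{hartshorne1}, whose direct system is cofinally realized by the punctured germ $\mathcal U\setminus p$ once $p$ is the only non-factorial point, identifies $\cl(\mathcal U)$ with $\cl(\cO^h_{\mathcal U,p})$. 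Concatenating these gives the first isomorphism.

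For the second isomorphism I would work entirely on the germ. By definition $\cl(\cO^h_{\mathcal U,p})=\WDiv((\mathcal U,p))/(\text{principal})$ and $\Pic((\mathcal U,p))=\Div((\mathcal U,p))/(\text{principal})$; since principal divisors are Cartier and $\Div((\mathcal U,p))\subset\WDiv((\mathcal U,p))$, there is a short exact sequence
\begin{equation}
0 \longrightarrow \Pic((\mathcal U,p)) \longrightarrow \cl(\cO^h_{\mathcal U,p}) \longrightarrow \WDiv((\mathcal U,p))/\Div((\mathcal U,p)) \longrightarrow 0 .
\end{equation}
It therefore suffices to prove $\Pic((\mathcal U,p))=0$. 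This is exactly where contractibility is used: on a Stein space every line bundle admits a global meromorphic section, so $\Pic((\mathcal U,p))\simeq H^1(\mathcal U,\cO^*)$, and the exponential sequence combined with Cartan's Theorem~B ($H^i(\mathcal U,\cO)=0$ for $i>0$) identifies this with $H^2(\mathcal U,\Z)$, which vanishes since $\mathcal U$ is contractible. Hence the third arrow above is an isomorphism, and together with the first isomorphism this yields the chain $\cl(\cO^h_{\mathcal U,p})\simeq\Pic(\mathcal U\setminus p)\simeq\WDiv((\mathcal U,p))/\Div((\mathcal U,p))$.

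The main obstacle I expect lies not in any single step but in making the germ-level identifications rigorous and compatible: one must verify that a single representative $\mathcal U$ can be chosen Stein, contractible, and small enough that $p$ is its only non-factorial point, and that the object $\Pic((\mathcal U,p))$ in the exact sequence genuinely coincides with the analytic Picard group $H^1(\mathcal U,\cO^*)$ of line bundles rather than a purely formal germ quotient. The decisive analytic input is the vanishing of the local analytic Picard group, which needs \emph{both} Steinness (for Theorem~B) and contractibility (for $H^2(\mathcal U,\Z)=0$); without contractibility a nontrivial $H^2(\mathcal U,\Z)$ would survive and the second isomorphism would fail in general.
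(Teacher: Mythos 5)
Your proof is correct, and it is essentially the argument the paper leaves implicit: the paper gives no written proof of Proposition \ref{localanclassgroup} (it only says ``one can directly prove''), but your route — the identification $\cl(\cO^h_{\mathcal U,p})\simeq\cl(\mathcal U)\simeq\cl(\mathcal U\setminus p)=\Pic(\mathcal U\setminus p)$ via Proposition \ref{hartshorne1} and Corollary \ref{hartshorne2}, plus the vanishing of $\Pic$ of a contractible Stein representative (exponential sequence and Theorem B) to collapse $\WDiv/\mathrm{prin}$ onto $\WDiv/\Div$ — is exactly what the hypotheses and the surrounding remarks of Sections \ref{basics}--\ref{FQFAF} are set up to deliver. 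The caveat you flag (germ versus a fixed small Stein contractible representative, and punctured spectrum versus punctured neighborhood) is genuine, but it is the same identification the paper itself assumes in the remark following Corollary \ref{hartshorne2}, so your write-up matches the paper's level of rigor as well as its intended method.
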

  Therefore, by Remark \ref{sigmaandQf} $(\mathcal U, p)$ is  analytically $\Q$-factorial if and only  if   $\sigma ({\mathcal U},p)=0$, that is  the local  analytic divisor class group $ {\cl} (\cO^h_{{\mathcal U, p}})$ is torsion.

  \subsection{$\Q$-factorializations}
  
Recall that the  nodal quintic threefold  $ X \subset \P^4$ of equation $  x_0g_0 + x_1 g_1=0$, with $g_0, \ g_1$ general quartic polynomials in the variables $[x_0, \cdots, x_4]$, is not $\Q$-factorial, as the  Weil divisor  $D$  defined by $x_0=g_1=0$  is not Cartier. Recall that the  birational morphism obtained by blowing up $\P^4$ along $D$ provides a small projective  resolution $X_1 \to X$; in particular $X_1$ is a  $\Q$-factorial variety.    When the isolated singularity is {toric}, $X$ is not $\Q$-factorial if and only if the maximal cone corresponding to the toric singular point is not { simplicial}; a $\Q$-factorial birational model $X_1$ together with a small morphism $X_1 \to X$  is achieved by  a simplicial subdivision of the cone. More generally, we have

 \begin{thm}[Corollary 1.4.3, \cite{BCHMcK}
 ]\label{Aqf}   Let $X$ be an algebraic threefold with klt singularities.
  If $X$ is not $\Q$-factorial, there exists a small projective birational morphism $\phi: X_1 \to X$, where $X_1$ is $\Q$-factorial with klt singularities.
 \end{thm}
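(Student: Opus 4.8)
The plan is to deduce the statement from the main theorems of \cite{BCHMcK} by running a relative Minimal Model Program over $X$, after choosing a boundary whose log canonical divisor is relatively equivalent to an effective exceptional divisor; the negativity lemma will then force every exceptional divisor to be contracted, producing a small morphism.

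First I would take a log resolution $f \colon W \to X$, which exists since $X$ is normal, with $W$ smooth and hence $\Q$-factorial. Let $E_1, \dots, E_n$ be the $f$-exceptional prime divisors and write $K_W = f^* K_X + \sum_i a_i E_i$; here $K_X$ is $\Q$-Cartier because klt singularities are $\Q$-Gorenstein, and $a_i > -1$ for all $i$ by the klt hypothesis. Since there are finitely many $E_i$ and each $-a_i < 1$, I can pick rational numbers $c_i$ with $\max(0, -a_i) < c_i < 1$. Then $\Delta := \sum_i c_i E_i$ has simple normal crossing support with coefficients in $(0,1)$, so $(W, \Delta)$ is a $\Q$-factorial klt pair, and
$$K_W + \Delta \sim_{\Q, X} \sum_i (a_i + c_i)\, E_i =: F,$$
an effective $f$-exceptional $\Q$-divisor all of whose coefficients are strictly positive.

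Next I would run the relative $(K_W+\Delta)$-MMP over $X$. Because $K_W + \Delta \sim_{\Q,X} F \ge 0$ is pseudo-effective over $X$, this program should terminate with a relative minimal model $g \colon X_1 \to X$ rather than a Mori fibre space; each step being projective, $g$ is projective birational, and the output $(X_1, \Delta_1)$ is $\Q$-factorial and klt, where $\Delta_1 = g_* \Delta$. The delicate point here is termination: the natural boundary $\Delta$ is $f$-exceptional, hence not big over $X$, so \cite{BCHMcK} does not apply verbatim. I expect this to be the main obstacle, and I would resolve it by the standard MMP-with-scaling reduction, running the program for $(W, \Delta + tA)$ with $A$ general and relatively ample over $X$ and letting $t \to 0$, so that the big-boundary case of \cite{BCHMcK} applies.

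Finally I would check that $g$ is small. On $X_1$ the divisor $K_{X_1} + \Delta_1$ is $g$-nef by construction, and it is $\Q$-linearly equivalent over $X$ to the pushforward $F_1$ of $F$; moreover every surviving component of $F_1$ is the strict transform of some $f$-exceptional $E_i$ and is therefore $g$-exceptional. Thus $F_1$ is simultaneously $g$-nef and effective and $g$-exceptional, so the negativity lemma gives $F_1 \le 0$, whence $F_1 = 0$. As every coefficient $a_i + c_i$ of $F$ is strictly positive, this forces each $E_i$ to have been contracted along the MMP; hence $g$ is an isomorphism in codimension one, i.e.\ small, and $\Delta_1 = 0$. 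Therefore $X_1$ is $\Q$-factorial with klt singularities and $\phi := g$ is the required small projective birational morphism.
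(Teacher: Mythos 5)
Your argument is correct, but note that the paper itself offers no proof of this statement: Theorem \ref{Aqf} is imported verbatim as Corollary 1.4.3 of \cite{BCHMcK}, with only a remark that it follows from finite generation and that Kawamata \cite{Kawamata1988} had earlier proved the terminal threefold germ case by a quite different route (classification of terminal singularities and reduction to index one). What you have written is essentially a reconstruction of BCHM's own proof of their Corollary 1.4.3 --- log resolution, a boundary $\Delta=\sum_i c_i E_i$ chosen so that $K_W+\Delta\sim_{\Q,X}F$ with $F$ effective, $f$-exceptional and supported on \emph{all} the $E_i$, a relative MMP over $X$, and the negativity lemma to force $F_1=0$ and hence smallness and $\Delta_1=0$ --- and each of these steps is justified; unlike Kawamata's argument, and like BCHM's, it is not special to dimension three. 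One correction: the ``main obstacle'' you identify does not exist. Relative bigness over a birational base is automatic (the generic fibre of $f$ is a point, so any divisor, in particular the $f$-exceptional $\Delta$, is big over $X$), hence the existence and termination results of \cite{BCHMcK} for klt pairs with boundary big over the base apply verbatim to $(W,\Delta)\to X$; the MMP-with-scaling detour you propose is therefore unnecessary, though it is harmless --- indeed it is exactly the mechanism by which those results are proved.
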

 
The above algebraic $\Q$-factorialization Theorem \ref{Aqf} was first proved by Kawamata in Corollary 4.5 \cite{Kawamata1988},  for a threefold analytic germ with at most terminal singularities. In the proof Kawamata uses the classification of threefold terminal singularities and he reduces to singularities of index one.  
Birkar, Cascini, Hacon and M\textsuperscript{c}Kernan prove Corollary 1.4.3 in  \cite{BCHMcK}  as a consequence of their celebrated Theorem of the finite generation of the canonical ring.
   
  Kawamata in Corollary 4.5' \cite{Kawamata1988}  also proves
 the  analytic $\Q$-factorialization for  a threefold analytic germ with terminal singularities.  We need  an analytic $\Q$-factorialization  result for klt singularities.

 \begin{thm}\label{Klqf}   Let $(\mathcal U^h, p)$ be a threefold analytic germ with at most an isolated klt singularity at $p$.
  If $(\mathcal U^h, p)$ is not  analytically $\Q$-factorial, there exists a small  bimeromorphic morphism $\phi:\mathcal V^h \to \mathcal U^h$, where $\mathcal V^h$ is analytically $\Q$-factorial with klt singularities.
 \end{thm}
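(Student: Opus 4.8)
The plan is to reduce the klt case to Kawamata's analytic $\Q$-factorialization for \emph{terminal} threefold germs (Corollary 4.5', \cite{Kawamata1988}) by passing to the index-one cover, and then to descend the result through the resulting finite quotient. First I would record the starting point: since klt singularities are rational (Theorem \ref{kltRat}), Proposition \ref{ratFin} guarantees that $\sigma((\mathcal U^h,p))$ is finite, and by Remark \ref{sigmaandQf} together with Proposition \ref{localanclassgroup} the hypothesis that $(\mathcal U^h,p)$ is not analytically $\Q$-factorial says precisely that $\sigma((\mathcal U^h,p))=\dim_{\Q}\big(\cl(\cO^h_{\mathcal U,p})\otimes_{\Z}\Q\big)>0$; that is, there is a Weil divisor germ that is not $\Q$-Cartier, and the aim is to realize all such classes by a small modification.

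Next I would pass to the canonical (index-one) cover. By the cyclic-quotient description of klt singularities [Corollary 5.21, \cite{KollarMori}] there is a finite cyclic cover $\pi\colon(\tilde{\mathcal U}^h,\tilde p)\to(\mathcal U^h,p)$ with group $G=\Z/r$ acting freely in codimension two — so $\pi$ is unramified in codimension one — and with $\tilde{\mathcal U}^h$ Gorenstein (index one) canonical. Because the singularity of $\mathcal U^h$ is isolated, $\mathcal U^h\setminus\{p\}$ is smooth, and by purity of the branch locus a cover unramified in codimension one over a regular base is étale there; hence $\tilde{\mathcal U}^h$ is smooth away from $\tilde p$ and carries an isolated Gorenstein canonical threefold point. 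By Reid's classification such a point is compound Du Val, hence \emph{terminal} (cf. Remark \ref{cdvHyp}). Thus $(\tilde{\mathcal U}^h,\tilde p)$ is an isolated Gorenstein terminal germ, to which Kawamata's theorem applies and yields a small bimeromorphic $\Q$-factorialization.

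Finally I would make this construction $G$-equivariant and descend. Choosing the auxiliary data in Kawamata's argument (a resolution, boundary and relative ample class) to be $G$-invariant makes every extremal contraction and flip of the relative minimal model program $G$-equivariant, producing a $G$-equivariant small bimeromorphic $\tilde\phi\colon\tilde{\mathcal V}^h\to\tilde{\mathcal U}^h$ with $\tilde{\mathcal V}^h$ analytically $\Q$-factorial and terminal. Setting $\mathcal V^h=\tilde{\mathcal V}^h/G$ and letting $\phi\colon\mathcal V^h\to\mathcal U^h$ be the induced map, equivariance and smallness of $\tilde\phi$ together with the quasi-étaleness of $\pi$ show that $\phi$ is proper, bimeromorphic and an isomorphism in codimension one, i.e.\ small. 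A finite quotient of a klt space is klt, so $\mathcal V^h$ is klt; and a finite quotient of an analytically $\Q$-factorial space is again analytically $\Q$-factorial — given a Weil divisor on $\mathcal V^h$, pull it back along $\tilde{\mathcal V}^h\to\mathcal V^h$, take a Cartier multiple using $\Q$-factoriality upstairs, and average over $G$ — so $\mathcal V^h$ is analytically $\Q$-factorial, as required. I expect the genuine obstacle to be the $G$-equivariance in this last step: guaranteeing that Kawamata's terminal $\Q$-factorialization (equivalently, a choice among the finitely many small $\Q$-factorial models related by flops) can be made $G$-invariant, and checking that smallness, the klt property and analytic $\Q$-factoriality all survive the finite quotient.
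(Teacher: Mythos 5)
There is a genuine gap, and it sits at the pivot of your whole reduction: the claim that the index-one cover $(\tilde{\mathcal U}^h,\tilde p)$ is terminal. By \cite[Corollary 5.21]{KollarMori} the cover is an index-one \emph{canonical} singularity, and an isolated Gorenstein canonical threefold point need \emph{not} be compound Du Val. Reid's theorem runs in the opposite direction: Gorenstein terminal threefold singularities are exactly the isolated cDV points (cf.\ Remark \ref{cdvHyp}); it does not say that isolated Gorenstein canonical points are cDV. A concrete counterexample is the ordinary triple point $\{x^3+y^3+z^3+w^3=0\}\subset\C^4$: it is isolated, Gorenstein, canonical, hence klt, and is its own index-one cover, yet the blow-up of the origin has an exceptional divisor of discrepancy $0$ (its general hyperplane section is a simple elliptic, not Du Val, singularity), so it is not terminal. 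Worse, the theorem has real content precisely in this regime: the cone over $\P^1\times\P^1$ in its anticanonical embedding is an isolated Gorenstein canonical point whose local class group is $\Z^2/\Z(2,2)$, which has non-torsion elements, so it is not analytically $\Q$-factorial and genuinely needs a $\Q$-factorialization --- but your argument cannot reach it, since Kawamata's Corollary 4.5$'$ \cite{Kawamata1988} really does require terminal singularities. The natural repair, first passing to a crepant terminalization of the canonical cover and then applying Kawamata, destroys the conclusion: terminalization extracts divisors of discrepancy zero, so the composite map to $\mathcal U^h$ is no longer small, whereas smallness is exactly what Theorem \ref{Klqf} asserts.

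Beyond this, the equivariance step you flag as the main obstacle is indeed unresolved (Kawamata's analytic argument reduces to classification of terminal points and is not formulated equivariantly), although your descent claims themselves (finite quotients of klt, $\Q$-factorial, small data remain klt, $\Q$-factorial, small) are standard and fine. For comparison, the paper takes a route that never touches covers or terminality: it uses Artin's algebraic approximation (Theorem \ref{AlgApprx}) to algebraize the germ together with a generator of $\cl(\cO^h_{\mathcal U^h,p})$, applies the BCHM algebraic $\Q$-factorialization (Theorem \ref{Aqf}), which is proved for klt singularities directly, transports the resulting small morphism back through the biholomorphism, and then iterates, using that $\sigma(\mathcal U^h)$ is finite by rationality of klt singularities (Theorems \ref{kltRat} and \ref{ratFin}) and drops strictly at each step. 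If you want to salvage your strategy, you would need a $\Q$-factorialization statement valid for isolated Gorenstein \emph{canonical} analytic germs as input --- which is essentially the theorem being proved, so the reduction is circular as it stands.
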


  We first need\footnote{We thank J. Koll\'ar for pointing out Artin's \cite{ArtinApprox}.}
 \begin{thm}[Algebraic Approximation, \cite{ArtinApprox}, Th. 3.8]\label{AlgApprx} Let  $(\mathcal U^h, p)$ be a threefold analytic germ with an isolated singularity at $p$ and $D^h$ a Weil divisor. There is a normal quasi-projective variety $ X$, an open neighborhood  $\mathcal U  \subset  X $, $p \in \mathcal U$,  and a Weil divisor $D$ such that, possibly after restricting $\mathcal U^h$, there exists a biholomorphic map $m: \mathcal U^h \to \mathcal U$ with $m(D^h)=D$.
  \end{thm}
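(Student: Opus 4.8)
The plan is to realize this as an instance of M.\ Artin's approximation theorem for analytic solutions of algebraic (polynomial) equations, with the passage from ``agreement to high order'' to ``genuine biholomorphism'' supplied by the finite determinacy of isolated singularities. First I would choose an embedding $(\cU^h,p)\hookrightarrow(\C^n,0)$ and write the reduced ideal of the germ as $(f_1,\dots,f_r)$ with $f_i\in\C\{x_1,\dots,x_n\}$. Rather than treating the Weil divisor $D^h$ as a (possibly badly singular) subvariety, I would record it through its divisorial ideal $J^h=\cO(-D^h)\subset\cO^h_{\cU,p}$, a rank-one reflexive module given by a finite presentation with convergent entries. The properties ``the $f_i$ cut out a normal threefold with an isolated singularity at $0$'' ($R_1+S_2$, together with the Jacobian criterion away from $0$) and ``$J^h$ is a rank-one reflexive ideal'' then become a system of polynomial equations in the coefficients of auxiliary unknown series (cofactors, syzygies, and the reflexivity isomorphism $(J^h)^{\vee\vee}\cong J^h$), for which the given convergent data is a solution.

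Next I would apply Artin approximation in its analytic-to-algebraic form: for every $c$ there is an algebraic solution --- series $\bar f_i$ and a presentation of an ideal $J$ with entries in the henselization $\C\langle x\rangle$, all algebraic over $\C[x]$ --- agreeing with the analytic data modulo $\mathfrak m^{c}$ and satisfying the same equations. This produces an algebraic germ $(\cU,0)$, again a normal isolated threefold singularity, carrying an algebraic rank-one reflexive ideal $J$, i.e.\ an algebraic Weil divisor $D$.

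The decisive step is to convert the high-order match into an honest analytic isomorphism carrying $D$ to $D^h$. Here I would use that the singularity is isolated: the deformation space $T^1$ of the germ and the groups $\mathrm{Ext}^1(J^h,J^h)$ governing deformations of the reflexive ideal are finite-dimensional, because $\cU^h$ is smooth and $J^h$ is locally free away from $p$. Finite-dimensionality yields finite determinacy, so for $c$ large the perturbation of $(f_i;J^h)$ by the order-$\ge c$ difference is a trivial deformation: there is a biholomorphism $m:\cU^h\to\cU$ (after restricting to small representatives) with $m^{*}J=J^h$, hence $m(D^h)=D$. Finally I would spread out: the finitely many algebraic power series involved are defined over a finitely generated $\C$-subalgebra and realized on an \'etale neighborhood of a closed point of some affine variety of finite type; passing to a projective closure and normalizing gives a normal quasi-projective $X$ with an analytic open $\cU\ni p$ and an honest algebraic Weil divisor $D$, and normality, codimension and the class of $D$ are preserved by $m$.

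I expect the main obstacle to be precisely the upgrade above: ensuring that high-order agreement of the defining data produces a \emph{convergent} (not merely formal) isomorphism of germs, and that this isomorphism respects the divisor. Treating $D^h$ through its reflexive module $J^h$ rather than as a subvariety is what makes this tractable, since it is exactly isolatedness of the singularity --- guaranteeing finiteness of the relevant $T^1$ and $\mathrm{Ext}^1$ --- that forces finite determinacy for the pair $(\cU^h,D^h)$; without it $D^h$ could be singular in codimension one and the naive deformation space of the subvariety would be infinite-dimensional.
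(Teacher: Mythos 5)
You should first be aware that the paper does not prove this statement: it is imported verbatim as Theorem~3.8 of Artin \cite{ArtinApprox}, the authors' only addition being the remark that Artin's result assumes the singularity is isolated. So your proposal is really measured against Artin's own argument, and in outline it reconstructs that argument faithfully: encode the germ and the divisor as a solution of a polynomial system, use analytic-to-algebraic approximation to produce an algebraic solution agreeing with the analytic one modulo $\mathfrak{m}^c$, upgrade the high-order agreement to a biholomorphism using isolatedness, and spread out to a normal quasi-projective $X$. Your idea of tracking $D^h$ through its rank-one reflexive ideal $J^h$, which is invertible off $p$ (so the sheaves $\operatorname{Ext}^i(J^h,J^h)$, $i\geq 1$, are supported at $p$ and the relevant deformation spaces are finite-dimensional), is a genuinely good way to sidestep the infinite-dimensional deformation theory of a divisor that may be singular in codimension one — a worry you correctly identify at the end.

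There are, however, two gaps. The first affects the stated conclusion: if you approximate $J^h$ only as an abstract reflexive module and produce $m$ with $m^{*}J \cong J^h$ as $\mathcal{O}$-modules, you recover $D$ only up to a principal divisor on the germ, i.e.\ you prove $m(D^h)\sim D$ rather than $m(D^h)=D$; an abstract module isomorphism does not remember the embedding $J\hookrightarrow \mathcal{O}$, and on a germ with nontrivial local class group this linear-equivalence ambiguity is real. For the paper's application (Theorem~\ref{Klqf}, where $D$ need only generate the image of the class group) the weaker statement suffices, but to get the theorem as stated you must put the embedding — generators of the ideal of $D^h$ as an actual subvariety — among the approximated unknowns; the finiteness you need survives because $J^h$ is invertible off $p$. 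The second gap is the determinacy step, which you assert (``finite-dimensionality yields finite determinacy'') but which is the real content of Artin's theorem: for a general isolated singularity, not a hypersurface or complete intersection, the approximant must first be arranged to be a flat perturbation (this is exactly why the syzygies must be carried along as unknowns — the naive interpolating family $f+t(\bar f-f)$ need not be flat), and then one solves the trivialization equations $\bar f_i=\sum_j u_{ij}\,(f_j\circ\varphi)$ formally order by order, using that the obstructions to triviality live in a finite-dimensional $T^1$ and hence die beyond a bounded order, and finally applies the approximation theorem a \emph{second} time to make the formal trivialization convergent. Without this second application (or an equivalent determinacy theorem), high-order agreement of defining data does not by itself yield a convergent isomorphism of germs. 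A minor related point: normality and the Serre conditions are open conditions, not polynomial equations on coefficients, so they cannot literally be ``part of the system''; this is harmless, since once $m$ exists the algebraic germ inherits normality from $\mathcal{U}^h$, but the proof should be reordered accordingly.
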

In Artin's result the singularities are isolated.
 \begin{proof}[Proof of Theorem \ref{Klqf}] Let $D$ be a generator of $\mathcal Cl ({\cO^h_{\mathcal U^h, p}})$. After possibly further restricting to smaller open neighborhoods of $p$, we have that $D$ is a generator of $\mathcal Cl ({\cO_{\mathcal U, p}})$ and $\WDiv(\mathcal U) / \Div(\mathcal U)$ (Theorem \ref{AlgApprx}). By Theorem \ref{Aqf} there exists a small projective birational morphism $\phi: {\mathcal U}_1 \to  {\mathcal U}$, where $ {\mathcal U_1}$ is $\Q$-factorial with klt singularities. Then $m^{-1} \cdot \phi: {\mathcal U}^h_1  \to \mathcal U^h$ is a small bimeromorphic morphism, and $\sigma(\mathcal U^h_1)  < \sigma (\mathcal U^h)$. In addition, $\mathcal U^h_1$ has klt singularities. Since $\sigma (\mathcal U^h)$ is finite (Theorems \ref{kltRat} and \ref{ratFin}) by repeating the process if necessary we obtain the Theorem.
 \end{proof}

Also Koll\'ar kindly provided us another  proof:  Theorem 3.10  in \cite{ArtinApprox} and  \cite{HironakaFormal} implies that for any finitely generated subgroup   $G\subset \mathcal Cl(\cO^h_{\mathcal U^h, p})$ there is an algebraic approximation $\mathcal U$ such that $G$ is contained in the image of $\mathcal Cl(\cO_{\mathcal U, p})$.

In general the  relation between the algebraic and analytic class groups is quite delicate.
See  for example  \cite{MohanKumar} for surface  rational double points   as well as  \cite{BrevikNolletCl}.
There are examples of isolated analytic singularities such that the local (analytic) class group cannot be reconstructed from the local (algebraic) class group of any algebraic approximation \cite{KollarLocPic} (in case of isolated singularities the class group and the local Picard group are identified (Definition \ref{localPic}).

\subsection{$\Q$-factoriality and the ``Calabi-Yau condition"}\label{CYCond}

  \begin{thm}[Koll\'ar, see also Proposition 3.5,  \cite{FujinoLogTer}]
 Let $X$ be a normal $\Q$-factorial scheme, $\tilde X$ a normal scheme and  $\phi: \tilde X \to X$ a birational morphism. Then any irreducible component of the exceptional locus has codimension one.   \end{thm}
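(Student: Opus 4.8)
The plan is to argue by contradiction: suppose $\phi \colon \tilde X \to X$ is birational with $X$ normal and $\Q$-factorial, but some irreducible component $E$ of the exceptional locus has codimension $\geq 2$ in $\tilde X$. Write $Z = \phi(E) \subset X$ for its image, so that $\phi$ is an isomorphism in a neighborhood of the generic points of all codimension-one subvarieties of $X$ that meet a suitable open set. The key object to exploit is that $E$, being exceptional of codimension $\geq 2$, is contracted to something of \emph{smaller} dimension, yet $\Q$-factoriality on the base $X$ forces every Weil divisor to be $\Q$-Cartier; the tension between these two facts is what I would turn into a contradiction.

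First I would reduce to a local statement and produce a Weil divisor on $\tilde X$ that is exceptional over $X$. Concretely, choose a prime divisor $D \subset \tilde X$ (of codimension one) passing through the generic point of $E$ but not dominating $X$ via $\phi$; one can arrange this because $E$ has codimension $\geq 2$ so there is room for a codimension-one subvariety of $\tilde X$ sitting over the lower-dimensional image. The plan is then to consider the pushforward $\phi_* D$, which is a Weil divisor on $X$ (or is zero if $D$ is itself $\phi$-exceptional). By $\Q$-factoriality of $X$, the divisor $\phi_* D$ is $\Q$-Cartier, and I would pull it back and compare $\phi^*(\phi_* D)$ with $D$ near the generic point of $E$.

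The heart of the argument is the following discrepancy/negativity mechanism. Since $\phi$ is birational and $X$ is normal, we have $\phi^*(\phi_* D) = D + \Delta$ where $\Delta$ is supported on the exceptional locus; the left-hand side is $\Q$-Cartier because $X$ is $\Q$-factorial. If the exceptional locus had \emph{no} codimension-one component over the relevant part of $X$, then $\Delta$ would vanish there and $D$ itself would be $\Q$-Cartier and $\phi$-exceptional. But a nonzero $\phi$-exceptional $\Q$-Cartier divisor that is contracted to codimension $\geq 2$ contradicts the negativity lemma (equivalently, the rigidity of fibers of a birational morphism): a divisor contracted to small dimension cannot be the pullback of anything from the base, so its restriction to a general contracted curve would have to be both zero (being a pullback) and negative (by the projection/negativity estimate), which is impossible. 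This forces the existence of a genuine codimension-one exceptional component, contradicting our assumption that all components of the exceptional locus have codimension $\geq 2$.

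I expect the main obstacle to be making the construction of the auxiliary divisor $D$ clean and genuinely codimension-one in $\tilde X$ while controlling its image, and in invoking the negativity lemma in the correct generality for a merely normal (not necessarily $\Q$-factorial or klt) target $\tilde X$. A cleaner route that sidesteps some of this is purely dimension-theoretic: if every exceptional component had codimension $\geq 2$, then $\phi$ would be an isomorphism in codimension one, i.e.\ small; but then for any Weil divisor $W$ on $\tilde X$, its image $\phi_* W$ is $\Q$-Cartier by $\Q$-factoriality of $X$, and pulling back shows $W$ is $\Q$-Cartier, making $\tilde X$ itself $\Q$-factorial and $\phi$ a small morphism between $\Q$-factorial varieties with Picard rank jumping—which, combined with properness and the rigidity of the contracted loci, is the standard obstruction. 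I would present the negativity-lemma version as the primary argument since it is self-contained given only the normality and $\Q$-factoriality hypotheses, and flag the small-morphism reformulation as the conceptual summary of why the statement holds.
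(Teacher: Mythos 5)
The paper does not actually prove this theorem; it is quoted from Koll\'ar (and Fujino, Prop.\ 3.5) with only the remark that the proof works analytically, so your attempt has to stand on its own --- and it has a genuine gap at its core. After writing $\phi^*(\phi_*D)=D+\Delta$ you assert that, in the absence of codimension-one exceptional components near $E$, ``$D$ itself would be $\Q$-Cartier \emph{and $\phi$-exceptional},'' and you then feed $D$ into the negativity lemma. But $D$ is not $\phi$-exceptional: you chose $D$ to contain $E$, and it maps onto the divisor $\phi_*D\neq 0$ (note also that ``not dominating $X$'' is vacuous --- no divisor dominates $X$ --- while if you mean $\phi$-exceptional, no such $D$ containing $E$ can exist, since a $\phi$-exceptional prime divisor containing $E$ would be a codimension-one component of $\mathrm{Ex}(\phi)$ containing the component $E$, forcing $E$ itself to be divisorial). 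So the negativity lemma has nothing to act on. Worse, the facts you can legitimately extract --- $D$ is $\Q$-Cartier at the generic point of $E$, and $D\cdot C=0$ for contracted curves $C$ avoiding $\mathrm{Supp}\,\Delta$ --- are simply not contradictory: for any divisor $H\subset X$ through $\phi(E)$, the pullback $\phi^*H$ is an effective divisor containing $E$ with zero intersection against every contracted curve. No contradiction can ever be generated from a divisor \emph{containing} $E$; in addition, $\Delta$ need not be $\Q$-Cartier (your $\tilde X$ is only normal), and intersection numbers against a complete contracted curve are global, so arguments ``near the generic point of $E$'' cannot control them.

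The standard proof inverts your choice of auxiliary divisor, and this is the missing idea. Using properness and normality one first checks that $\dim\phi(E)<\dim E$ and that no codimension-one component of $\mathrm{Ex}(\phi)$ contains $E$; hence there is a contracted curve $C\subset E$ through a point $y\in E$ lying on no other component of $\mathrm{Ex}(\phi)$ and on no divisorial one. Now pick an effective divisor $G$ on $\tilde X$ with $y\in\mathrm{Supp}(G)$ but $C\not\subset\mathrm{Supp}(G)$ (locally the divisor of a function vanishing at $y$ but not on $C$). The component of $G$ through $y$ is not exceptional, so $\phi(y)\in\mathrm{Supp}(\phi_*G)$; by $\Q$-factoriality $\phi_*G$ is $\Q$-Cartier, and the key property of pullbacks of effective $\Q$-Cartier divisors gives $\mathrm{Supp}\bigl(\phi^*\phi_*G\bigr)=\phi^{-1}\bigl(\mathrm{Supp}(\phi_*G)\bigr)\supset\phi^{-1}(\phi(y))\supset C$. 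On the other hand $\phi^*\phi_*G-G$ is supported on codimension-one components of $\mathrm{Ex}(\phi)$, none of which meet a neighborhood of $y$, so along a dense open subset of $C$ the supports of $G$ and $\phi^*\phi_*G$ agree; taking closures forces $C\subset\mathrm{Supp}(G)$, a contradiction. (Equivalently: $G\cdot C>0$ while $G$ agrees near $C$ with a pullback, whence $G\cdot C=0$.) Your fallback ``small morphism'' route is this same mechanism, but it only treats the case where \emph{every} component of $\mathrm{Ex}(\phi)$ has codimension at least two --- weaker than the theorem, which excludes any single such component --- and its final step (``Picard rank jumping \ldots\ rigidity'') is precisely the $G\cdot C$ argument above, which you never actually carry out.
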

 Note that the proof also works in the analytic setting.
 Combining Koll\'ar's Theorem and the previous $\Q$-factorialization results we have: 
 \begin{corollary}\label{QFal} Let $X$ be an algebraic  threefold  with klt singularities. There exists  a small  birational bimeromorphic morphism $\phi: \tilde X \to X$  
  if and only if $X$ is not 
 $\Q$-factorial. \end{corollary}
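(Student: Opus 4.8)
The statement is a biconditional, so the plan is to prove the two implications separately, and I expect the forward direction (existence of a small morphism $\Rightarrow$ non-$\Q$-factoriality) to follow immediately from Koll\'ar's Theorem stated just above, while the reverse direction (non-$\Q$-factoriality $\Rightarrow$ existence of a small morphism) will rely on the $\Q$-factorialization results.

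For the implication ``small morphism exists $\Rightarrow$ not $\Q$-factorial,'' I would argue by contrapositive. Suppose $X$ is $\Q$-factorial. By Koll\'ar's Theorem (the one immediately preceding this corollary), for any birational morphism $\phi:\tilde X \to X$ with $\tilde X$ normal, every irreducible component of the exceptional locus has codimension one. Hence there is no \emph{small} birational morphism onto $X$ (other than an isomorphism, which is excluded since small morphisms are by definition isomorphisms in codimension one but genuinely birational and non-trivial here). The remark that ``the proof also works in the analytic setting'' lets me draw the same conclusion for bimeromorphic morphisms, so a small birational bimeromorphic $\phi$ cannot exist when $X$ is $\Q$-factorial.

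For the converse, ``not $\Q$-factorial $\Rightarrow$ small morphism exists,'' I would invoke Theorem \ref{Aqf} directly. Since $X$ is an algebraic threefold with klt singularities that is not $\Q$-factorial, Theorem \ref{Aqf} produces a small projective birational morphism $\phi:X_1 \to X$ with $X_1$ being $\Q$-factorial and klt. This $\phi$ is the desired small birational morphism, and since it is projective birational between varieties it is in particular bimeromorphic on the associated analytic spaces. That supplies the existence half of the biconditional.

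The main (though modest) obstacle is bookkeeping about what ``small'' excludes: I need to be careful that in the $\Q$-factorial case the conclusion of Koll\'ar's theorem genuinely rules out a small \emph{non-isomorphism}, rather than merely describing the exceptional locus of an arbitrary resolution. The cleanest formulation is that a small birational morphism has exceptional locus of codimension $\ge 2$ by definition, so Koll\'ar's codimension-one conclusion forces the exceptional locus to be empty, i.e.\ $\phi$ is an isomorphism; thus a \emph{nontrivial} small morphism cannot exist over a $\Q$-factorial $X$. I would state this explicitly to make the contrapositive airtight, and then the analytic transfer is immediate from the noted analytic validity of Koll\'ar's argument together with the analytic $\Q$-factorialization in Theorem \ref{Klqf} if an analytic version of the corollary is also intended.
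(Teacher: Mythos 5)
Your proposal is correct and follows essentially the same route as the paper, which obtains the corollary precisely by ``combining Koll\'ar's Theorem and the previous $\Q$-factorialization results'': Koll\'ar's codimension-one statement rules out nontrivial small morphisms over a $\Q$-factorial $X$, while Theorem \ref{Aqf} supplies the small $\Q$-factorialization morphism when $X$ is not $\Q$-factorial. Your explicit bookkeeping that ``small'' forces the exceptional locus to be empty (hence $\phi$ an isomorphism) in the $\Q$-factorial case is exactly the implicit content of the paper's argument.
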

 
  \begin{corollary}\label{QFan} Let   $(\mathcal U^h, p)$ be a threefold analytic germ with at most an isolated klt singularity at $p$. There exists  a small  bimeromorphic morphism $\phi: \tilde U^h \to U^h$
   if and only if $(\mathcal U^h, p)$ is not analytically
 $\Q$-factorial. \end{corollary}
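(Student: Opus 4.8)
The plan is to transport the proof of the algebraic statement, Corollary \ref{QFal}, to the analytic germ setting, relying on two facts already available: that Kollár's theorem on exceptional loci holds analytically (as noted immediately after its statement), and that the analytic $\Q$-factorialization of Theorem \ref{Klqf} applies to isolated klt germs. As in the algebraic case, the argument splits into the two implications, one of which is essentially a restatement of Theorem \ref{Klqf} and the other a contrapositive application of Kollár's theorem.

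For the implication that non-$\Q$-factoriality produces a small morphism, suppose $(\mathcal U^h, p)$ is not analytically $\Q$-factorial. Theorem \ref{Klqf} then directly furnishes a small bimeromorphic morphism $\phi: \mathcal V^h \to \mathcal U^h$ with $\mathcal V^h$ analytically $\Q$-factorial (and klt), which is the desired morphism. It is moreover nontrivial: by Remark \ref{sigmaandQf} the source has $\sigma(\mathcal V^h)=0$ while $\sigma(\mathcal U^h)>0$, so an isomorphism is impossible.

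For the converse I would argue by contradiction. Assume $(\mathcal U^h, p)$ is analytically $\Q$-factorial but that a nontrivial small bimeromorphic morphism $\phi: \tilde U^h \to \mathcal U^h$ exists, with $\tilde U^h$ normal. Since $\phi$ is small it is an isomorphism in codimension one, so every irreducible component of its exceptional locus has codimension $\geq 2$. On the other hand, Kollár's theorem, applied analytically to the normal $\Q$-factorial germ $\mathcal U^h$, asserts that every irreducible component of the exceptional locus of a bimeromorphic morphism to $\mathcal U^h$ has codimension one. These two conclusions force the exceptional locus to be empty, whence $\phi$ is a bimeromorphic morphism with empty exceptional locus between normal varieties, i.e. an isomorphism, contradicting nontriviality. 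Hence $(\mathcal U^h, p)$ cannot be $\Q$-factorial.

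The main point to pin down is the precise convention for ``small'': a genuinely small modification is taken to be nontrivial (not an isomorphism), exactly as in Corollary \ref{QFal}; with this convention the equivalence is clean, and indeed the identity is vacuously small. The only technical inputs beyond bookkeeping are the analytic validity of Kollár's theorem, which the text has already flagged, and the normality of the source $\tilde U^h$, implicit in the notion of bimeromorphic morphism of germs used here. Notably, no index-one reduction or classification of terminal singularities is needed, since all of that heavy lifting has been absorbed into Theorem \ref{Klqf}.
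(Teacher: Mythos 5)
Your proof is correct and follows exactly the route the paper intends: the forward direction is Theorem \ref{Klqf} verbatim, and the converse is the contrapositive application of Koll\'ar's theorem (valid analytically, as the paper notes), with smallness forcing the exceptional locus to be empty. The paper gives no more detail than this -- it simply states that the corollary follows by ``combining Koll\'ar's Theorem and the previous $\Q$-factorialization results'' -- so your write-up, including the explicit convention that a small modification is nontrivial, is a faithful and slightly more careful rendering of the same argument.
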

 If $X$ is a threefold with canonical singularities and $K_X \simeq \mathcal O_X$, then  a smooth resolution $\tilde X \to X$ with $K_{\tilde X} \simeq \mathcal O_{\tilde X}$ can exist only if either $X$ is canonical but not terminal or $X$ is not $\Q$-factorial.  It is often of interest in the physics literature to determine the existence of such resolutions, which are said to preserve 
 the ``Calabi-Yau condition", see for example \cite{ArrasGrassiWeigand}.

\section{Cohomologies, local and global (analytic) $\Q$-factoriality, Rational Poincar\'e}\label{ClGQFRP}


\begin{thm} [\cite{CGMP}, \cite{GM88}, \cite{MacPhersonICM}] Let $X$ be a  complex compact analytic threefold. The intersection cohomology  of $X$ has Poincar\'{e}  duality over $\Q$: 
$ {IH^i (X, \Q)}^{\vee} \simeq IH^{6-i} (X, \Q)$. 
\end{thm}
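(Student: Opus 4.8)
The plan is to realize the statement as the special case, for complex analytic varieties, of the general Poincar\'e duality theorem for intersection (co)homology on compact oriented pseudomanifolds due to Goresky and MacPherson, and then to check that the hypotheses of that theorem are automatic here. First I would equip $X$ with a Whitney stratification by complex analytic subvarieties. Since $X$ is complex analytic of complex dimension $3$, every stratum is a complex manifold, hence of even real dimension; thus $X$ is a topological pseudomanifold of real dimension $6$ whose singular strata all have even real codimension $\geq 2$, so in particular there are no strata of odd codimension. Moreover the complex structure canonically orients the regular part, so $X$ is an \emph{oriented} pseudomanifold and, being compact, carries a fundamental class with $\Q$-coefficients.

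Next I would invoke the generalized Poincar\'e duality of \cite{GM88}: for a compact oriented $n$-dimensional pseudomanifold and complementary perversities $\bar p+\bar q=\bar t$ there is a nondegenerate pairing $IH^{\bar p}_i(X,\Q)\otimes IH^{\bar q}_{n-i}(X,\Q)\to \Q$. The point specific to the complex analytic setting is that, because all strata have even codimension, the lower and upper middle perversities coincide and the resulting middle perversity $\bar m$ is self-complementary, $\bar m+\bar m=\bar t$. Taking $\bar p=\bar q=\bar m$ and $n=6$ therefore produces a perfect pairing $IH_i(X,\Q)\otimes IH_{6-i}(X,\Q)\to\Q$; passing to duals and using that over the field $\Q$ intersection cohomology is the dual of intersection homology yields ${IH^i (X, \Q)}^{\vee} \simeq IH^{6-i} (X, \Q)$, as claimed.

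I do not expect a genuine obstacle: once the hypotheses are in place the result is a direct application of the duality theorem, and the only points requiring attention---orientability and the even-codimensionality of the strata---hold automatically for complex analytic varieties. The cleanest route is in fact the sheaf-theoretic formulation of \cite{CGMP, MacPhersonICM}, namely the Verdier self-duality of the middle-perversity intersection complex: one verifies that the Verdier dual of the intersection complex on $X$ is again that same complex, up to the shift fixed by the even-codimension condition, and then reads off the asserted duality directly on hypercohomology. Either way, the even-codimensionality of the complex analytic stratification is the single structural input that upgrades the perversity duality into honest self-duality.
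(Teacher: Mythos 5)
The paper does not actually prove this statement---it is quoted as a known background result directly from \cite{CGMP}, \cite{GM88}, \cite{MacPhersonICM}---and your argument is precisely the standard proof in those references: complex analytic strata have even real codimension, so the middle perversity is self-complementary, and Goresky--MacPherson generalized duality for compact oriented pseudomanifolds (equivalently, Verdier self-duality of the middle-perversity intersection complex) specializes to the asserted duality over $\Q$. Your proposal is correct and takes the same route as the sources the paper cites.
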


\begin{thm} [\cite{CGMP}, \cite{deCataldoMigliorini05},  \cite{PetersSteenbrink}; conjectured in \cite{MacPhersonICM}] Let $X$ be a  complex projective algebraic variety. The intersection coholomogy $ {IH^{k}(X, \Q)}$  has a pure Hodge structure of weight $k$,  in particular there is a  Hodge decomposition with  the property $ {IH^{i ,j}(X, \C)}=\overline{{IH^{j,i}(X, \C)}}$, $k=i+j$.  
\end{thm}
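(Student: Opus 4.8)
The plan is to reduce the statement to the classical Hodge theory of a smooth projective variety, via resolution of singularities together with the Decomposition Theorem upgraded to carry Hodge structures. By resolution of singularities choose a proper morphism $\pi\colon \tilde X \to X$ with $\tilde X$ smooth projective and $\pi$ an isomorphism over the smooth locus; set $n=\dim X$. Normalizing the intersection complex so that $IC_X(\Q)[n]$ is a perverse sheaf, the Decomposition Theorem of Beilinson--Bernstein--Deligne--Gabber exhibits $IC_X(\Q)[n]$ as a direct summand of $R\pi_*\Q_{\tilde X}[n]$ (the ``top'' summand, since $\pi$ is birational). Taking hypercohomology in the appropriate degree then exhibits $IH^k(X,\Q)$ as a direct summand of $H^k(\tilde X,\Q)$. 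Since $H^k(\tilde X,\Q)$ carries a pure Hodge structure of weight $k$ by classical Hodge theory, purity of $IH^k(X,\Q)$ will follow as soon as this splitting is realized by morphisms of Hodge structures.

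The substance of the argument is therefore to promote the topological Decomposition Theorem to a statement in a category of Hodge-theoretic objects, and I would follow one of the two standard routes. The first, following Saito, is to invoke mixed Hodge modules: the constant module $\Q_{\tilde X}^H[n]$ is pure of weight $n$, its direct image $\pi_*\Q_{\tilde X}^H[n]$ under the projective morphism $\pi$ is again pure (stability of purity under projective pushforward) and decomposes into shifted pure Hodge modules whose perverse cohomology includes the intersection Hodge module $IC_X^H$ of weight $n$. Pushing forward further along the projective structure morphism $a\colon X\to \mathrm{pt}$ and using that, for a pure module $M$ of weight $w$, the cohomology $H^j(a_*M)$ is pure of weight $w+j$, one obtains that $IH^k(X,\Q)$ is pure of weight $k$. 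The second route, following de Cataldo--Migliorini \cite{deCataldoMigliorini05} and the account in \cite{PetersSteenbrink}, proves the Decomposition Theorem by purely classical Hodge theory on $\tilde X$: one establishes the relative Hard Lefschetz theorem and the Hodge--Riemann bilinear relations along the perverse Leray filtration of $\pi$, which forces the filtration to split and identifies its graded pieces as intersection cohomologies of subvarieties, realized as sub-Hodge-structures of the pure Hodge structure $H^*(\tilde X,\Q)$. Either way the Hodge-compatible splitting is secured.

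Granting the Hodge-compatible decomposition, purity is immediate: $IH^k(X,\Q)$ is a sub-Hodge-structure of a pure weight-$k$ Hodge structure, hence pure of weight $k$, and the Hodge decomposition $IH^k(X,\C)=\bigoplus_{i+j=k} IH^{i,j}(X,\C)$ is inherited. The symmetry $IH^{i,j}(X,\C)=\overline{IH^{j,i}(X,\C)}$ is then a formal consequence of the rational (hence real) structure $IH^k(X,\Q)$: complex conjugation on $IH^k(X,\C)=IH^k(X,\Q)\otimes_{\Q}\C$ exchanges the $(i,j)$ and $(j,i)$ summands of any pure Hodge structure defined over $\R$.

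The main obstacle is precisely the passage from the topological to the Hodge-theoretic Decomposition Theorem; this is the deep input and is not formal. Carrying it out requires either the full apparatus of Saito's mixed Hodge modules --- self-duality of the intersection module, stability of purity under projective pushforward, and the weight bookkeeping for the cohomology of a pushforward --- or, in the classical approach, the verification of relative Hard Lefschetz and the Hodge--Riemann relations along the perverse filtration, which is the technical heart of \cite{deCataldoMigliorini05}. Once that machinery is available, extracting the weight-$k$ purity and the conjugation symmetry is routine.
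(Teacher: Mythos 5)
The paper does not prove this statement at all: it is quoted as a known theorem, with the proof residing entirely in the cited references \cite{CGMP}, \cite{deCataldoMigliorini05}, \cite{PetersSteenbrink}. Your sketch is a correct account of the standard argument from precisely those sources (intersection cohomology as a Hodge-theoretic direct summand of the cohomology of a resolution, via Saito's mixed Hodge modules or via de Cataldo--Migliorini's classical Hodge-theoretic proof of the Decomposition Theorem), and, like the paper itself, it defers the one genuinely deep input --- the Hodge-compatible splitting --- to that literature, which is appropriate here.
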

 If $X$ is smooth then the intersection cohomology equals the regular cohomology. This is also true for other types of  singular varieties, in particular for rational homology manifolds:

 \subsection{Rational homology manifolds}
 Recall that all varieties are assumed to be normal and connected.
 \begin{definition} A  complex threefold $X$  is  a rational homology manifold if and only if  for every point $p \in X$,  $H_6( X,  X \setminus p ;  \ \Q) = \Q$ and $ H _i( X,  X \setminus p ;  \ \Q) =0, \ \   i  \leq  5 $.\end{definition}

Orbifolds are examples of rational homology manifolds. We are interested in rational homology manifolds which are not orbifolds. In fact, an application and motivation of this paper is the study of  singular Calabi Yau threefolds, while three-dimensional Gorenstein orbifold singularities are smooth.

\begin{thm}[\cite{GoreskyMacPherson2, GM88}]\label{ratHomP} Let $X$ be a complex analytic variety   which is a rational homology manifold.  The intersection cohomology and  the ordinary  (simplicial) cohomology 
  coincide. In particular if $X$ is compact, Poincar\'e duality holds and  $\chi_{top}(X)$ can be computed with any of these theories.  \end{thm}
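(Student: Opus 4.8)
The plan is to reduce the statement to an isomorphism of sheaf complexes and then transport the Poincar\'e duality already established for intersection cohomology. Write both theories as hypercohomology: the ordinary cohomology is $H^k(X;\Q)=\mathbb{H}^k(X;\Q_X)$, where $\Q_X$ is the constant sheaf, while the intersection cohomology is $IH^k(X;\Q)=\mathbb{H}^k(X;\mathbf{IC}_X^\bullet)$, where $\mathbf{IC}_X^\bullet$ is the (middle perversity) intersection complex, normalized so that $\mathbf{IC}_X^\bullet|_{X_{\mathrm{sm}}}\cong\Q_{X_{\mathrm{sm}}}$ on the smooth locus. Thus the whole theorem follows once I show
\[
\mathbf{IC}_X^\bullet\;\cong\;\Q_X \quad\text{in } D^b_c(X;\Q).
\]

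First I would invoke the Goresky--MacPherson--Deligne axiomatic characterization of $\mathbf{IC}_X^\bullet$: among complexes in $D^b_c(X;\Q)$ that restrict to $\Q_{X_{\mathrm{sm}}}$ on the smooth stratum, $\mathbf{IC}_X^\bullet$ is the unique one satisfying the support condition and the dual cosupport (attaching) condition with respect to a Whitney stratification $X\supset\Sigma$. The constant sheaf $\Q_X$ trivially satisfies the normalization and the support condition, since its stalk cohomology is concentrated in a single degree. Hence it suffices to check that $\Q_X$ satisfies the cosupport condition along each stratum.

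The key step is to identify the cosupport condition with the hypothesis that $X$ is a rational homology manifold. For a point $p$ lying in a stratum of positive codimension, the costalk $i_p^!\Q_X$ computes local cohomology with supports, and by local (Verdier) duality over $\Q$ its cohomology is dual to the local homology groups $H_*(X,X\setminus p;\Q)$. The defining conditions $H_6(X,X\setminus p;\Q)=\Q$ and $H_i(X,X\setminus p;\Q)=0$ for $i\le 5$ say precisely that these costalks are concentrated in the top degree with value $\Q$, exactly as for a point in a $6$-manifold; equivalently, the dualizing complex satisfies $\omega_X\cong\Q_X[6]$, the orientation being canonical for a complex variety. This forces the cosupport bounds required of $\mathbf{IC}_X^\bullet$ to hold for $\Q_X$, and the axiomatic uniqueness then yields $\mathbf{IC}_X^\bullet\cong\Q_X$. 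Taking hypercohomology gives $IH^k(X;\Q)\cong H^k(X;\Q)$ for all $k$.

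Finally, for compact $X$ the intersection cohomology satisfies Poincar\'e duality over $\Q$ by the first Theorem of this section; transporting this isomorphism across $\mathbf{IC}_X^\bullet\cong\Q_X$ shows that ordinary rational cohomology satisfies $H^i(X;\Q)^\vee\cong H^{6-i}(X;\Q)$ as well. Since both theories then have the same Betti numbers, the alternating sum $\chi_{top}(X)=\sum_i(-1)^i\dim_\Q H^i(X;\Q)$ agrees with $\sum_i(-1)^i\dim_\Q IH^i(X;\Q)$, so $\chi_{top}$ may be computed with either theory. I expect the main obstacle to be the third step: correctly matching the homological definition of a rational homology manifold to the sheaf-theoretic cosupport condition via local duality, while keeping track of the shift and orientation conventions so that the constant sheaf genuinely meets the intersection-complex axioms.
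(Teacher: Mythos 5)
Your proposal is correct and takes essentially the same approach as the source the paper relies on: the paper does not reprove this statement but cites Goresky--MacPherson, whose argument in \cite{GoreskyMacPherson2, GM88} is precisely the identification $\mathbf{IC}_X^\bullet \cong \Q_X$ via the axiomatic (support/cosupport) characterization, with the rational homology manifold condition supplying the costalk axiom through the duality $H^k(i_p^!\Q_X) \cong H_k(X, X\setminus p;\Q)^\vee$. Transporting intersection-cohomology Poincar\'e duality and the Euler characteristic across this isomorphism, as you do, is exactly the intended deduction for the compact case.
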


\begin{definition}  $L$ is  a rational homology $5$-sphere  if and only if   $H_0( L, \Q)=H_5( L, \Q)= \Q$ and  $ H _i( L , \Q) =0, \ \ 0 < i < 5 $. \end{definition}

Let $ p \in \mathcal U \subset \C^n$, $\dim_{\C} \mathcal U =3$, $D$ a suitable small ball around $p$, $\mathcal V_p = \mathcal U \cap D$, $S=\partial D$  and $L_p= \mathcal U \cap S$. $L_p$ is the link of $ p \in \mathcal U$. (If $p$ is an isolated singular point,  then $\mathcal V_p \cap D$ is a cone over $L_p$.)

Then, see for example \cite{LMaximNotes}:
\begin{proposition}\label{RHSrhm} $X$ is a rational homology manifold if and only if, $\forall  \ p \in X$, the link $L_p$ is a rational homology sphere.
\end{proposition}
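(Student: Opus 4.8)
The plan is to reduce the local homology of $X$ at $p$ to the homology of the link $L_p$ by exploiting the local conical structure of complex analytic varieties, and then to read off the two conditions as the same statement under a shift of degree. First I would invoke excision to replace the local homology groups by those of a small neighborhood: for the ball $D$ and $\mathcal V_p = \mathcal U \cap D$ as in the setup one has
\[
H_i(X, X \setminus p;\Q) \;\simeq\; H_i(\mathcal V_p, \mathcal V_p \setminus p;\Q)
\]
for all $i$, since the closed complement of $\mathcal V_p$ can be excised.

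Next, by the local conical structure of analytic varieties (which for an isolated singularity is exactly the cone description $\mathcal V_p = C(L_p)$ recalled just before the statement), the neighborhood $\mathcal V_p$ is contractible with cone point $p$, while the punctured neighborhood $\mathcal V_p \setminus p$ deformation retracts onto the link $L_p$. Feeding this into the long exact sequence of the pair $(\mathcal V_p, \mathcal V_p \setminus p)$ in reduced rational homology, and using $\widetilde H_*(\mathcal V_p;\Q)=0$, yields the key isomorphism
\[
H_i(X, X\setminus p;\Q)\;\simeq\;\widetilde H_{i-1}(L_p;\Q)
\qquad (i \ge 0),
\]
with the convention $\widetilde H_{-1}=0$.

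With this isomorphism in hand, the equivalence is a matter of matching indices. Unwinding the definitions, $X$ being a rational homology manifold at $p$ means $H_6(X,X\setminus p;\Q)=\Q$ and $H_i(X,X\setminus p;\Q)=0$ for $i\le 5$, which by the displayed isomorphism translates verbatim into $\widetilde H_5(L_p;\Q)=\Q$ and $\widetilde H_j(L_p;\Q)=0$ for $0\le j\le 4$. This is precisely the condition that $L_p$ be a rational homology $5$-sphere; in particular $\widetilde H_0(L_p;\Q)=0$, the connectedness of $L_p$, corresponds exactly to $H_1(X,X\setminus p;\Q)=0$. Since the quantifier is over every $p\in X$, the two conditions are equivalent and the proposition follows. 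Smooth points need no separate treatment: there $L_p\simeq S^5$ is genuinely a rational homology $5$-sphere.

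The step I expect to carry the real content is the local conical structure: the whole argument rests on $\mathcal V_p$ being contractible and $\mathcal V_p\setminus p$ retracting onto $L_p$. For isolated singularities this is supplied by the cone description recalled in the excerpt, but to cover arbitrary $p\in X$ (in particular non-isolated singularities, where $L_p$ is still defined as $\mathcal U\cap S$) one must appeal to Milnor's general local conical structure theorem for analytic sets. The only other point needing care is the connectedness of $L_p$, which holds because $X$ is assumed normal, hence locally irreducible, of dimension $\ge 2$; this is what pins down $\widetilde H_0(L_p;\Q)=0$ and, equivalently, the vanishing of $H_1$ of the pair.
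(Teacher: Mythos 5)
Your proof is correct and is exactly the standard argument behind this proposition, which the paper itself does not prove but delegates to the literature (``see for example \cite{LMaximNotes}''): excision plus the local conical structure theorem give $H_i(X, X\setminus p;\Q)\simeq \widetilde H_{i-1}(L_p;\Q)$, and the two conditions match degree by degree. You also correctly identify the two genuine points of care --- invoking the general conical structure theorem of Milnor/Burghelea--Verona at non-isolated singular points (the paper's preamble states the cone description only for isolated ones) and using normality to get connectedness of $L_p$, i.e.\ $\widetilde H_0(L_p;\Q)=0$.
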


\subsection{ Rational homology manifolds and  $\Q$-factoriality}

\begin{thm}[with J. Shaneson]\label{kltRHMT} Let $\mathcal U\subset \C^n$, $\dim_{\C} \mathcal U =3$ an algebraic (analytic) variety with rational singularities. Assume that $\mathcal U \setminus \Gamma$ is smooth and that for every $ p \in \mathcal U$,  $\pi_1(L_p)$ is finite. Then $\mathcal U$ is a rational homology manifold if and only if $\mathcal U$ is locally analytically $\Q$-factorial.
\end{thm}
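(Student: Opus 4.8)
The plan is to reduce the assertion, point by point, to a comparison between the rational cohomology of the link and the local class group, the comparison being controlled by the rationality of the singularities. First I would invoke Proposition \ref{RHSrhm} to replace ``rational homology manifold'' by the condition that every link $L_p$ be a rational homology $5$-sphere. Each $L_p$ is a closed oriented $5$-manifold, so rational Poincar\'e duality gives $b_i(L_p)=b_{5-i}(L_p)$, and $L_p$ is a rational homology sphere precisely when $H_1(L_p,\Q)=0$ and $H_2(L_p,\Q)=0$, the remaining vanishings following by duality. The hypothesis that $\pi_1(L_p)$ is finite forces $H_1(L_p,\Z)$ to be finite, hence $H_1(L_p,\Q)=0$ for free; thus being a rational homology manifold is equivalent to $H^2(L_p,\Q)=0$ for every $p\in\mathcal U$.

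Next I would translate local analytic $\Q$-factoriality into a statement about the same group. By Definition \ref{laqf}, Remark \ref{sigmaandQf} and Proposition \ref{localanclassgroup}, $\mathcal U$ is locally analytically $\Q$-factorial at $p$ if and only if the local class group $\cl(\cO^h_{\mathcal U,p})\simeq\Pic(\mathcal V_p\setminus p)$ is torsion. Choosing a resolution $\rho\colon Y\to\mathcal V_p$ with exceptional divisor $E=\bigcup_i E_i$, rationality ($R^i\rho_*\cO_Y=0$, Theorem \ref{kltRat}) together with the contractibility of $\mathcal V_p$ yields $H^i(Y,\cO_Y)=0$ for $i>0$, so the exponential sequence gives an isomorphism $c_1\colon\Pic(Y)\xrightarrow{\sim}H^2(Y,\Z)$. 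Dividing out the classes of the $E_i$ via the localization sequence and Corollary \ref{hartshorne2} identifies $\Pic(\mathcal V_p\setminus p)\otimes\Q$ with the image of the restriction $r\colon H^2(Y,\Q)\to H^2(L_p,\Q)$, that is, with the lowest weight piece $W_2H^2(L_p,\Q)$. Consequently $\mathcal U$ is $\Q$-factorial at $p$ if and only if $\operatorname{im}(r)=0$.

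It remains to compare $\operatorname{im}(r)$ with all of $H^2(L_p,\Q)$, and this is the heart of the matter. One implication is formal: $H^2(L_p,\Q)=0$ gives $\operatorname{im}(r)=0$, so the rational homology manifold condition implies $\Q$-factoriality. For the converse I would prove that, for a rational singularity, the mixed Hodge structure on $H^2(\mathcal V_p\setminus p,\Q)=H^2(L_p,\Q)$ is pure of weight $2$, whence $H^2(L_p,\Q)=W_2H^2(L_p,\Q)=\operatorname{im}(r)$ and $\operatorname{im}(r)=0$ forces $H^2(L_p,\Q)=0$. Purity is exactly where rationality is used: in the weight spectral sequence of $Y\setminus E$ the graded piece $\operatorname{Gr}^W_3H^2$ is built from $\bigoplus_i H^1(E_i)$ and $\operatorname{Gr}^W_4H^2$ from the combinatorics of the intersections $E_i\cap E_j$; the vanishing $R^1\rho_*\cO_Y=0$ forces $H^1(\cO_{E_i})=0$, hence $H^1(E_i,\Q)=0$ and the weight-$3$ part dies, while the $\Q$-acyclicity of the dual complex of $E$ for a rational singularity kills the weight-$4$ part. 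Establishing this purity is the main obstacle; the topological input alone is insufficient, since Poincar\'e--Lefschetz duality for the pair $(Y,L_p)$ only delivers the identity $\dim\operatorname{im}(r)+\dim\operatorname{im}\bigl(H^3(Y)\to H^3(L_p)\bigr)=b_2(L_p)$, and it is precisely rationality that forces the second summand to vanish.

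Finally I would address points lying on a positive-dimensional component of the singular locus $\Gamma$, where $L_p$ is no longer the link of an isolated singularity. Here the finiteness of $\pi_1(L_p)$ together with rationality constrains the transverse singularity type: a transverse rational surface singularity is a rational double point, which is automatically $\Q$-factorial and has a rational homology sphere as link, so the pointwise equivalence persists. The isolated case treated above is thus the essential content, and assembling the pointwise equivalences over all $p\in\mathcal U$ yields the theorem.
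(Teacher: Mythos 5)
Your reduction via Proposition \ref{RHSrhm} and your treatment of isolated singular points are essentially sound, and for isolated points your route is genuinely different from the paper's: where the paper simply cites Flenner's result to identify $H^2(L_p,\Z)$ with $\cl(\cO^h_{\mathcal U,p})$, you reprove its rational-coefficient content via the exponential sequence on a resolution and a weight/purity argument on $H^2(L_p,\Q)$. The genuine gap lies exactly where the paper's proof does most of its work: at points $p$ on a positive-dimensional component of the singular locus. Your opening step asserts that every $L_p$ is a closed oriented $5$-manifold and applies Poincar\'e duality to it. This is false at non-isolated singular points: there $L_p$ is a stratified space, singular along the circles $Z\cap L_p$, and duality on $L_p$ is unavailable precisely until one knows that $L_p$ is a rational homology manifold. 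This is why the paper's proof splits into Case 2 ($p$ in the smooth locus $Z^{(1)}$ of the singular curve) and Case 3 ($p\in Z^{(2)}$), replacing duality on $L_p$ by Mayer--Vietoris computations on the decompositions $L_p=(\partial D^2\times c\mathcal K)\cup(D^2\times\mathcal K)$ and $L_p=(L_p\setminus Z\cap L_p)\cup_\ell N_\ell$, applying duality only to the honest $3$-manifold $\mathcal K$, and invoking Flenner's theorem in the form $H^2(L_p\setminus Z,\Z)\simeq\cl(\cO^h_{\mathcal U,p})$.

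Your final paragraph, which is meant to dispose of these points, does not work. The claim that a transverse rational surface singularity is a rational double point is false (all quotient surface singularities are rational), and, more seriously, properties of the transverse surface germ do not control either side of the equivalence at $p$: every normal surface germ is $\Q$-factorial, so transverse $\Q$-factoriality carries no information, and what must be compared is the class group of the \emph{threefold} germ at $p$ with the topology of the five-dimensional link. A concrete counterexample to your reasoning is the germ at the origin of $\{xy=z^2w\}\subset\C^4$: it is cDV (its general hyperplane section through $0$ is Du Val), hence rational and klt with $\pi_1(L_p)$ finite; its singular locus is the smooth curve $\{x=y=z=0\}$ with transverse type $A_1$; and yet its analytic class group is isomorphic to $\Z$, generated by the Weil divisor $\{x=z=0\}$ (blowing up the ideal $(x,w)$ gives a small modification), so it is not analytically $\Q$-factorial and $L_p$ is not a rational homology sphere. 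Thus at such points both sides of the equivalence can fail simultaneously, your assertion that they automatically hold is untenable, and the non-isolated strata require the genuine arguments of the paper's Cases 2 and 3 rather than a reduction to the isolated case.
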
   
\begin{proof}  We will prove  that $L_p$ is a rational homology sphere, for all $p \in \mathcal U$  if and only if $(\mathcal U,p)$ is locally analytically $\Q$-factorial.  The statement then follows from Proposition \ref{RHSrhm}. By assumption, 
 ${H_1(L_p,\Q)=0}$. 
 
 $\mathcal U$ is complex and normal, thus $\codim \Gamma \geq 2$. Let $Z$ be the support of $\Gamma$ as a topological variety, $Z^{(2)}$ the singular locus of $Z$ and $Z^{(1)} \stackrel{def}= Z \ \setminus Z^{(2)}$.

\noindent \underline{Case 1} :  $p \in Z$   an isolated singularity. By restricting $\mathcal U$  let us  assume that $(\mathcal U, p)$ is a small neighborhood with an isolated singularity at $p$.
We can then follow the argument  of  \cite[Lemma 4.2]{KollarFlops} for terminal singularities. Since ${H_1(L_p,\Q)=0}$  by Poincar\'e duality we have ${H_4(L_p,\Q)=0}$. Because the singularities are rational, Flenner's result \cite[Satz 6.1]{Flenner}  implies  that $H^2(L_p, \Z) \simeq \Pic ^h(\mathcal U\setminus p)=\cl (\cO^h_{\mathcal U,p})$. Then $H_3(L_p, \Q)= H_2(L_p, \Q)=0$ if and only if the local class group $\cl (\cO^h_{\mathcal U,p})$ is torsion, that is, $(\mathcal U,p)$ is analytically $\Q$-factorial (Proposition \ref{localanclassgroup}).

\smallskip

\noindent \underline{Case 2} \footnote{We are grateful to M. Bies for providing us with  the TeX code for the figure.}:  $p \in Z^{(1)} $.  By suitably restricting $\mathcal U$ we can assume that $\mathcal U$ is a small open homotopically equivalent neighborhood of $\mathcal W \stackrel{def}=D^2 \times c \mathcal K$, where $c\mathcal K$ is the cone over a real 3-manifold $\mathcal K$, with cone point $C$ and $D^2 \subset \C^3$ a ball \cite{GM88}. Let $L_p$ be the link of $p$.

\vskip 0.17in

\begin{center}
\begin{tikzpicture}[scale=0.8]

    \def\l{4} 
    \def\h{1} 
    \def\s{-0.4} 

    \draw[dashed,blue] (0,0,0)--(0,\h,0);
    \draw[blue] (0,\h,0)--(\h,\h,0);
    \draw[blue] (\h,\h,0)--(\h,0,0);
    \draw[dashed,blue] (\h,0,0)--(0,0,0);
    \draw[dashed,blue] (0,0,0)--(\s,0,\l);
    \draw[blue] (\h,0,0)--(\h+\s,0,\l);
    \draw[blue] (\h,\h,0)--(\h+\s,\h,\l);
    \draw[blue] (0,\h,0)--(\s,\h,\l);
        
    \draw[orange] (\s+0.5*\h,0.5*\h,\l) -- (0.5*\h,0.5*\h,0);
    \draw[red,fill] (\s+0.5*\h,0.5*\h,\l) circle (0.04);
    \draw[black,fill] (0.5*\s+0.5*\h,0.5*\h,0.5*\l) circle (0.04) node[below right] {$P$};
    
    \draw[red] (\s,0,\l)--(\s,\h,\l)--(\h+\s,\h,\l)--(\h+\s,0,\l)--(\s,0,\l);
    
    \draw[gray] (\s+0.5*\h,0,\l)--(\s,0.5*\h,\l)--(\s+0.5*\h,\h,\l)--(\s+\h,0.5*\h,\l)--(\s+0.5*\h,0,\l);

    \node[red] at (1.5*\h,0,\l) [right] {\small{$K^3 \colon$ 3 manifold}};
    \node[gray] at (1.5*\h,0,0.666*\l) [right] {\small{cone over $k \colon \mathcal{A}$}};
    \node[blue] at (1.5*\h,0,0.333*\l) [right] {\small{$L_p \colon$ a link of $P$}};
    \node[orange] at (1.5*\h,0,0*\l) [right] {\small{$Z^{(1)} \cong D^2$}};

    \draw[red] (1.5*\h,0,\l) edge[out=150,in=-60,->] (0.8*\h+\s,-0.05*\h,\l);
    \draw[gray] (1.5*\h,0,0.666*\l) edge[out=160,in=-20,->] (\s+0.8*\h,0.25*\h,\l);
    \draw[blue] (1.5*\h,0,0.333*\l) edge[out=170,in=-20,->] (\h+0.3*\s,0,0.4*\l);
    \draw[orange] (1.5*\h,0,0*\l) edge[out=140,in=-30,->] (0.5*\h+0.1*\s,0.5*\h,0.2*\l);
    
\end{tikzpicture}
\end{center}

Then $\mathcal U \supset \mathcal W$ and
\bea\label{Lp}
L_p = \partial W =  ( \partial D^2 \times c\mathcal K) \cup (D^2 \times \mathcal K),
\eea
${ L_p    \setminus   Z \cap L_p} =  [\partial D^2 \times {(c\mathcal K \setminus C)}]\cup (D^2 \times \mathcal K)$.
The Mayer-Vietoris sequence for cohomology implies that $H^2(\mathcal W \setminus Z, \Z) \simeq H^2(L_p \setminus Z, \Z) = H^2(\mathcal K, \Z)$.
 Because the singularities are rational, Flenner's result \cite[Satz 6.1]{Flenner}  implies  that $H^2(L_p  \setminus Z, \Z) \simeq \cl (\cO^h_{{\mathcal U},p})$.
 Then $\cl (\cO^h_{{\mathcal U},p})$ is finite if and only $H^2(\mathcal K, \Q)=0$.  Since $\mathcal K$ is a manifold, Poincar\'e duality implies that ${H^2(\mathcal K, \Q)=0}$ if and only if $\mathcal K$ is a homology 3 sphere.  
We already remarked that $H_1(L_p, \Z)=0$; we now  use the  Mayer-Vietoris sequence for homology  and the decomposition in  (\ref{Lp})  to compute  $H_i(L_p,\Z),  \ i =2,3,4.$ 
 Then $H_i(L_p,\Z)=0,  \ i =2,3,4$ if and only if $H^2(\mathcal K, \Q)=0$, that is if and only if $\cl (\cO^h_{{\mathcal U},p})$ is finite. The statement  from $\mathcal W$ follows from 
 Proposition \ref{BerltramettiOdettiFossumStorch}.
 
\smallskip

\noindent \underline{Case 3} :  $p \in Z^{(2)} $.  Again, by suitably restricting $\mathcal U$, we can assume that $Z^{(2)} \cap \mathcal U =\{p\},$   and  \\${L_p= (L_p \setminus Z \cap L_p) \cup _\ell S^1_\ell}$, $ \ S^1_\ell \subset Z ^{(1)} \cap L_p$.
Write: 
\bea\label{Case3} L_p= (L_p \setminus Z \cap L_p) \cup _\ell N_\ell, 
\eea
where $N_\ell$ is open, it has the same homology of $S^1$ and $N_\ell \to S^1$, with fiber $c\mathcal K$, a cone on  3 manifold $\mathcal K$. Take  $\ q \in S^1_\ell $, $ \mathcal W_q \stackrel{def}=D^2 \times c \mathcal K $  to be a small neighborhood of $q$ in $\mathcal U$ \cite{GM88}. 

\noindent {\it Claim 1: $L_p$ is a rational homology manifold if an only  if $\mathcal W_q$ is locally analytically $\Q$-factorial, $ \forall q  \in Z \cap L_p$.}\\
{\it Proof of Claim 1.} Recall that  $q \in Z^{(1)}$, as in Case 2.  We will prove that $\mathcal W_q \cap L_p$ is a rational homology sphere if and only if   $\mathcal W_q$ is locally analytically $\Q$-factorial, for all $ q$. In fact  the previous description implies that  $ \mathcal W_q \cap L_p= I \times c \mathcal K $ is a neighborhood of $q$ in $L_p$, hence $ \mathcal W_q \cap L_p$ is a rational homology sphere if and only if  $\mathcal K$ is a rational homology sphere \cite{GM88}.  From Case 2 we know that $ \mathcal K$ is a rational homology sphere if and only if $\mathcal W_q$ is analytically $\Q$-factorial. 


\noindent {\it Claim 2: If $L_p$ is a rational homology manifold $H_4(L_p, \Q)=0$.}\\
{\it Proof of Claim 2.}  Poincar\'e duality.

\noindent {\it Claim 3: $H_2(L_p, \Q)=0$,  if an only if $\mathcal U$ is locally analytically $\Q$-factorial. }\\
{\it Proof of Claim 3.}  To compute $H_2(L_p,\Q)$ we  use again  the Mayer-Vietoris sequence for homology  and  the decomposition in (\ref{Case3}). 
Note that  for each $q$, $(L_p \setminus Z \cap L_p) \cap  N_q$ is homologically a bundle over $ S^1$, with fiber $\mathcal K$. Then $H_2(\cup N_q, \Q)=0$; recall that $H_2(L_p \setminus Z \cap L_p, \Q)=0$ if and only if $\cl (\cO^h _{\mathcal U, p})$ is finite \cite{Flenner}.  We conclude again by
 Proposition \ref{BerltramettiOdettiFossumStorch}.

\noindent {\it Claim 4:  If $L_p$ is a rational homology manifold and $H_2(L_p, \Q)=0$ then $H_3(L_p, \Q)=0$. }\\
{\it Proof of Claim 4.} By Poincar\'e duality.
\end{proof}

 \begin{thm}\label{klt} Let $\mathcal U\subset \C^n$, $\dim_{\C} \mathcal U =3$ an algebraic  threefold with klt singularities. Then $\mathcal U$ is a rational homology manifold if and only if $\mathcal U$ is locally analytically $\Q$-factorial.
\end{thm}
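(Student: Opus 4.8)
The plan is to deduce Theorem \ref{klt} from the more technical Theorem \ref{kltRHMT} by verifying that klt singularities automatically satisfy the hypotheses of that earlier result. Theorem \ref{kltRHMT} states the same biconditional---rational homology manifold if and only if locally analytically $\Q$-factorial---but under three standing assumptions: (i) the singularities are rational, (ii) $\mathcal U \setminus \Gamma$ is smooth for the stratified singular locus $\Gamma$, and (iii) $\pi_1(L_p)$ is finite for every $p \in \mathcal U$. So the entire content of the reduction is to check these three conditions hold for a threefold with klt singularities.

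First I would dispatch (i): by Theorem \ref{kltRat}, klt singularities (hence canonical and terminal) are rational, so the rationality hypothesis is immediate. Condition (ii) is essentially definitional once we take $\Gamma$ to be the singular locus: $\mathcal U \setminus \Gamma$ is the smooth locus by construction, so this is automatic. The substantive point is (iii), the finiteness of $\pi_1(L_p)$ for the link of each point. For a smooth point the link is a sphere and the statement is trivial, so the issue is only at singular points.

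The main obstacle will therefore be establishing that the link of a three-dimensional klt singularity has finite fundamental group. The natural route is to invoke the structure theory of klt singularities: by the characterization recalled in the excerpt (\cite[Corollary 5.21]{KollarMori}), a klt singularity is a cyclic quotient of an index-one canonical singularity by an action free in codimension two. An index-one canonical Gorenstein threefold singularity is a compound Du Val / canonical Gorenstein singularity, and I would appeal to the known fact that the links of such singularities---and more generally the property of having finite local fundamental group---are controlled in this setting; taking a finite cyclic quotient preserves finiteness of $\pi_1$ of the link up to a finite-index relation. Concretely, one uses that a finite quotient map on links induces a finite-index subgroup relationship between the respective fundamental groups, so finiteness of $\pi_1(L_p)$ for the index-one cover passes to the quotient. (Alternatively, one can cite the general result that klt singularities have finite local fundamental group, due to work building on the Minimal Model Program, e.g. results of Xu and collaborators.)

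Once all three hypotheses of Theorem \ref{kltRHMT} are verified, the conclusion follows verbatim. I would structure the proof as: \emph{By Theorem \ref{kltRat} klt singularities are rational; taking $\Gamma$ to be the singular locus, $\mathcal U \setminus \Gamma$ is smooth; and by the structure theory of klt threefold singularities (\cite[Corollary 5.21]{KollarMori}) together with the finiteness of the local fundamental group, $\pi_1(L_p)$ is finite for every $p$. The statement now follows directly from Theorem \ref{kltRHMT}.} The delicate part, worth spelling out carefully rather than glossing, is precisely the passage from the index-one cover to the quotient for the $\pi_1(L_p)$ finiteness, since that is the one place where a genuine appeal to klt-specific structure (beyond rationality) is needed.
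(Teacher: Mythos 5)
Your reduction is exactly the paper's proof: the paper's entire argument for Theorem \ref{klt} is the observation that $\pi_1(L_p)$ is finite for threefold klt singularities, citing \cite[Cor~1.5]{TianXuFund} (Tian--Xu), after which Definition \ref{laqf}, Proposition \ref{RHSrhm} and Theorem \ref{kltRHMT} give the statement. So your ``alternative'' route --- citing the finiteness of local fundamental groups of klt singularities due to Xu and collaborators --- is precisely what the paper does, and with that citation your proof is complete and identical in structure.

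The problem is with the route you declare to be the main content and propose to spell out: the index-one cover argument has a genuine gap. Its key input --- that links of index-one canonical (Gorenstein) threefold singularities have finite fundamental group --- is not a classical ``known fact.'' It is elementary (via Milnor's connectivity theorem for isolated hypersurface singularities) only when the index-one cover is an isolated cDV point, i.e.\ terminal; but the index-one cover of a klt germ is in general only canonical and need not be isolated or a hypersurface singularity. Strictly canonical index-one singularities are themselves klt, so for them the finiteness of $\pi_1(L_p)$ is a statement of essentially the same depth as the theorem you are proving --- the argument is circular unless you again invoke the MMP-based results of Xu/Tian--Xu. Nor can finiteness be extracted softly from rationality: by work of Kapovich and Koll\'ar, links of rational threefold singularities can have infinite (indeed quite general) fundamental groups, which is exactly why Theorem \ref{kltRHMT} carries the $\pi_1(L_p)$ hypothesis separately from the rationality hypothesis. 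A secondary, fixable inaccuracy: the map on links induced by the index-one cover is a priori only a \emph{branched} covering (the cyclic action may have fixed points), so $\pi_1$ of the cover's link need not inject as a finite-index subgroup; one should either note that the fixed locus misses the link (making the cover honest) or use an Armstrong/orbifold-type argument, under which finiteness still descends to the quotient. In short: keep the reduction, but replace the ``delicate part'' by the citation --- that citation is not an optional shortcut, it is the theorem.
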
   

\begin{proof}   In fact $\pi_1(L_p)$ is finite \cite[Cor~1.5]{TianXuFund}. The statement then follows from Definition \ref{laqf}, Proposition \ref{RHSrhm} and Theorem \ref{kltRHMT}.
\end{proof}

Banagl shows that a threefold $X$  with canonical singularities, trivial canonical divisor and $h^1(X,\cO_X)> 0$ is a rational homology manifold \cite[Remark 6.4, page 29]{Banagl2017}. The threefolds  in  the examples \ref{general} and \ref{terminalconifolds}, a rational homology manifold and one which is not,  below can  be taken to be Calabi-Yau (trivial canonical divisor and $h^1(X,\cO_X)= 0$).

  \begin{ex} \label{A_a-1sing}(Rational homology manifolds, non rational homology manifolds, local) \phantom{XXX} 
  
    Let  $(\mathcal U, p)$ be an
 $A_{a-1}$  Kleinian threefold singularity, that is $(\mathcal U, p)$ is  the zero-locus of\\
$ \quad
f(z,x_1,x_2,x_3) = z^a + x_1^2 + x_2^2 + x_3^2 \quad {\rm with} \quad a \geq 2 \in \mathbb N \,.$ 
These are terminal (and non-canonical) singularities \cite[Th.~1.1]{Reid}. A local, small resolution is possible if and only if $a$ is even, \cite[Cor.~1.6]{Reid}, \cite{Atiyah, Brieskorn}. Then
Corollaries \ref{QFal} and \ref{QFan}   imply that $(\mathcal U, p)$ is $\Q$-factorial (locally analytic $\Q$-factorial) if and only if $a$ is odd.
On the other hand one can also find directly that $(\mathcal U,p)$ is a rational homology manifold if and only if $a$ is odd     \cite{BrieskornBeiS}, \cite[Theorem 4.10]{DimcaBook}.
Flenner  \cite{Flenner}  proves the statement directly using the local divisor class group  
(see Section \ref{FQFAF}).

  \end{ex}
   Both these types of examples occur  in examples of  (elliptically fibered)   threefolds with $\Q$-factorial terminal singularities:
   
\begin{ex}\label{general} (A threefold with $\Q$-factorial terminal singularities and a rational homology manifold.) Let $ \pi: X \to B$ be an elliptic fibration in Weierstrass form with  general singular Kodaira fibers  of type $II$ (cusps) and $I_1$  (nodes) over the smooth points of the discriminant of the fibration. Then the singularities of $X$ are $\Q$-factorial, terminal, but not smooth, with local equation $z_0^a + \sum_{i=1,3}z_i^2=0$, with a odd. These  singularities are analytically $\Q$-factorial  and $X$ is a rational homology manifold. This is  Case 2 in the proof of Theorem \ref{Anomalycondition1}, see also \cite{ArrasGrassiWeigand}.
\end{ex}

\begin{ex}\label{terminalconifolds} (A threefold with $\Q$-factorial terminal singularities but not a rational homology manifold.) Let $ \pi: X \to B$ be a general elliptic fibration in Weierstrass form with singular Kodaira fibers  of type $I_1$ over the smooth points of the discriminant of the fibration such that the singular points of the discriminant are cusps and simple normal crossing divisors.
Then the singularities $ p \in X$ occur over the simple normal crossing points of the discriminant; these singularities are ordinary double points (``conifold") terminal $\Q$-factorial, but not smooth, with local equation $z_0^2+ \sum_{i=1,3}z_i^2=0$. These are not analytically $\Q$-factorial, and $X$ is not a rational homology manifold.  The Jacobian elliptic fibration of a general  genus one fibration has exactly this type of singularities  see for example \cite{BraunMorrison, Morrison:2016lix}.
This is  in fact Case 1 in the proof of Theorem \ref{Anomalycondition1}, see also \cite{ArrasGrassiWeigand}.

\end{ex}

In the next Section we show that nevertheless threefolds with isolated klt  singularities satisfy some Poincar\'e duality over the rationals.

\subsection{Rational Poincar\'e Duality}

 \begin{remark}\label{R3}  $\bar R_3= E^3 / <-\omega I_3> \ $  is an example of a  ``Calabi-Yau" threefold with isolated klt but not canonical singularities  \cite{OguisoCYkltRationality}. Here $E  = \C /  ({\Z + \omega \Z})$ and  $\omega= e^{\frac{2}{3}\pi i }$. The canonical divisor is numerically trivial, $h^1(\bar R_3,\cO_X)=h^2(\bar R_3,\cO_X)=0$,
it is elliptically fibered  and it is a rational homology manifold.  $\bar R_3$ is  also rational \cite{OguisoTruong}. Klt varieties with numerically trivial canonical divisors have Bochner's type properties \cite{GrebGuenanciaKebekus}.
\end{remark}

More generally, we can prove

 \begin{thm}\label{rPD1}Let $X$ be a projective algebraic threefold with isolated klt singularities. 
 \begin{enumerate}
\item The cup product  with the fundamental class gives an isomorphism \\
${H^5(X, \Q ) \xrightarrow{\sim} H_1(X, \Q)}$, that is $H_5(X, \Q)$ and $H_1(X, \Q)$ are Poincar\'e duals over the rationals.
\item If in addition  $b_2(X)=b_4(X)$, $X$ satisfies rational Poincar\'{e} duality, i.e.  the cup product  with the fundamental class gives Poincar\'{e} duality  with rational coefficients. \end{enumerate}
 \end{thm}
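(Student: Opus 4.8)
The plan is to analyze the cap product with the fundamental class, $\cap[X]\colon H^k(X,\Q)\to H_{6-k}(X,\Q)$, by comparing $X$ with its smooth locus $X^\circ = X\setminus\{p_1,\dots,p_m\}$, where $p_1,\dots,p_m$ are the finitely many (isolated) singular points, so that $X^\circ$ is an oriented open $6$-manifold. Since klt singularities are rational (Theorem \ref{kltRat}) and $\pi_1(L_{p_j})$ is finite \cite{TianXuFund}, each link $L_{p_j}$ is a closed oriented $5$-manifold with $H_1(L_{p_j},\Q)=H_4(L_{p_j},\Q)=0$ and, by its own Poincar\'e duality, $H_2(L_{p_j},\Q)\cong H_3(L_{p_j},\Q)$; set $r=\sum_j\dim_\Q H_2(L_{p_j},\Q)$. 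Using the local cone structure of $X$ near each $p_j$ together with excision, I would first record that $H^\bullet(X,X^\circ;\Q)\cong\bigoplus_j\tilde H^{\bullet-1}(L_{p_j},\Q)$ (and similarly for homology) is nonzero only in degrees $3,4$ (each of dimension $r$) and $6$. All the non-formal content of the theorem is thereby concentrated in these ``link corrections''.

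For part (1) I would run the chain of natural maps $H^5(X,\Q)\cong H^5(X,\{p_j\};\Q)\cong H^5_c(X^\circ,\Q)\xrightarrow[\ \sim\ ]{\mathrm{P.L.}}H_1(X^\circ,\Q)\cong H_1(X,\Q)$, where the first two isomorphisms are formal (the singular set is $0$-dimensional), the middle one is Poincar\'e--Lefschetz duality on $X^\circ$, and the last holds because $H_1(X,X^\circ)$ and $H_2(X,X^\circ)$, being $\bigoplus_j\tilde H_0(L_{p_j})$ and $\bigoplus_j\tilde H_1(L_{p_j})$, vanish. By naturality of cap products and compatibility of fundamental classes this composite is exactly $\cap[X]$, giving the claimed duality $H^5(X,\Q)\xrightarrow{\sim}H_1(X,\Q)$.

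For part (2) the key device is the commuting square (and its dual) expressing $\cap[X]$ through the restriction $j^*\colon H^k(X)\to H^k(X^\circ)$ and Poincar\'e--Lefschetz duality on $X^\circ$. This shows at once that $\cap[X]$ is an isomorphism for $k\in\{0,1,5,6\}$; that in degree $2$ it is injective (because $H^2(X,X^\circ)=\bigoplus_j\tilde H^1(L_{p_j})=0$) and in degree $4$ surjective (because $H^5(X,X^\circ)=\bigoplus_j\tilde H^4(L_{p_j})=0$), so the hypothesis $b_2(X)=b_4(X)$ upgrades both to isomorphisms; and that the middle degree $k=3$ reduces to the injectivity of $j^*\colon H^3(X)\to H^3(X^\circ)$, since $\dim H^3(X)=\dim H^3(X^\circ)=b_3(X)$.

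The main obstacle is precisely this injectivity in degree $3$. Its kernel equals $\mathrm{Im}\big(H^3(X,X^\circ)\to H^3(X)\big)$, a space that a priori could be as large as $\Q^r$; its vanishing is equivalent to the global identity $b_4(X)-b_2(X)=r$. I would establish this using rationality of the singularities (Theorem \ref{kltRat}): via excision and the weight filtration one identifies $\ker(j^*|_{3})$ with the weight $\le 2$ part $W_2H^3(X,\Q)$, which vanishes because $H^3$ of a proper variety with rational singularities is pure of weight $3$ (equivalently, the pullback to a resolution is injective); here both properness/projectivity and the klt hypothesis are essential. Granting this, $b_2(X)=b_4(X)$ forces $r=0$, so every link is a rational homology $5$-sphere; by Proposition \ref{RHSrhm} and Theorem \ref{klt} this means $X$ is a rational homology manifold, and rational Poincar\'e duality then also follows directly from Theorem \ref{ratHomP}, in agreement with the degree-by-degree analysis above.
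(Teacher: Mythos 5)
Your proof of part (1) is correct, and it takes a genuinely different route from the paper's. The paper replaces each neighborhood that fails to be locally analytically $\Q$-factorial by its analytic $\Q$-factorialization (Theorem \ref{Klqf}), patches these into a compact analytic threefold $X'$, shows $X'$ is a rational homology manifold (Theorems \ref{kltRHMT}, \ref{klt}), and transfers duality back to $X$ through Mayer--Vietoris comparison diagrams. You instead work directly on the smooth locus $X^\circ$ via Poincar\'e--Lefschetz duality, using only $H_1(L_p,\Q)=H_4(L_p,\Q)=0$ from finiteness of $\pi_1(L_p)$ \cite{TianXuFund}; this avoids the $\Q$-factorialization machinery entirely and is more self-contained. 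The same is true of your analysis in part (2) of the degrees $k\neq 3$: injectivity of $\cap[X]$ on $H^2$ (from $H^2(X,X^\circ;\Q)=0$), surjectivity on $H^4$ (from $H^5(X,X^\circ;\Q)=0$), and the hypothesis $b_2=b_4$ give those isomorphisms, paralleling the paper's surjectivity-plus-equal-dimensions argument for $H^4(X)\to H_2(X)$.

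However, your treatment of the middle degree $k=3$ rests on a false claim, and this gap cannot be repaired. It is not true that $H^3$ of a projective threefold with rational singularities is pure of weight $3$ (equivalently, that $H^3(X,\Q)\to H^3(X^\circ,\Q)$, or the pullback to a resolution, is injective). Your own, correctly derived, identity
\begin{equation*}
\dim\ker\bigl(H^3(X,\Q)\to H^3(X^\circ,\Q)\bigr)\;=\;r-\bigl(b_4(X)-b_2(X)\bigr)
\end{equation*}
already refutes it when combined with the paper's own Example \ref{terminalconifolds}: an elliptic Calabi--Yau threefold whose singular points are $\Q$-factorial terminal ordinary double points has $b_2=b_4$ by Proposition \ref{NS}, while each link is $S^2\times S^3$, so $r$ equals the number of nodes and is positive; hence the kernel above is nonzero and $W_2H^3(X,\Q)\neq 0$. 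The simplest instance is a quintic hypersurface in $\P^4$ with one node: it is factorial with $b_2=b_4=1$, yet $b_3(X)=b_3(\tilde X)+1$ for the blow-up resolution $\tilde X$, so purity fails. (Purity under $R^1\pi_*\cO_Y=0$ does hold for $H^2$ --- this is Srinivas's Theorem \ref{srinivasth}, used in the paper --- but it has no analogue for $H^3$; this is likely the source of the slip.)

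Note, moreover, that your argument would prove too much: by your reduction, $b_2=b_4$ together with degree-$3$ duality forces $r=0$, i.e.\ $X$ a rational homology manifold (Proposition \ref{RHSrhm}, Theorem \ref{klt}), which Example \ref{terminalconifolds} shows is not implied by the hypotheses of part (2). For such $X$ the self-pairing on $H^3(X,\Q)$ is genuinely degenerate, with radical of dimension $r-(b_4-b_2)$. The paper's proof, by contrast, never passes through the middle degree: what it establishes by transfer from $X'$ are exactly the isomorphisms $H^5(X,\Q)\xrightarrow{\sim}H_1(X,\Q)$ and, under $b_2=b_4$, $H^4(X,\Q)\xrightarrow{\sim}H_2(X,\Q)$ --- precisely the portion of your argument that is sound. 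So your proposal should be read as a valid and more elementary proof of those assertions; the final step, upgrading them to duality in all degrees via purity of $H^3$, is the genuine gap.
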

\begin{proof} If  $X$ is a rational homology manifold, both  statements follow from Theorem \ref{ratHomP} (no assumption of $b_2(X)=b_4(X)$ is needed).  
If $X$ is not a rational homology manifold, then it is not locally analytically  $\Q$-factorial  by Theorem \ref{klt}. Let $\{({\mathcal U},P)\}$  be  a collection of  contractible neighborhoods of the isolated singular points $\{P\}$ which are  not  analyticaly $\Q$-factorial; without loss of generality we can assume that any two neighborhoods $(\mathcal U,P)$  do not intersect. 
Let $ \phi_P:  ({\mathcal U'}_P, \Gamma_P) \to  ({\mathcal U_P},P)$ be the $\Q$-factorialization, that is a small  bimeromorphic  morphism, an isomorphism in codimension 1, with  $({\mathcal U'}_P, \Gamma_P)$ analytically $\Q$-factorial   and klt (Theorem \ref{Klqf}). ${\mathcal U'}_P$ is simply connected \cite{TakayamaSimpleConn}\footnote{Shepperd-Barron proved in an unpublished note in 1989 that   ${\mathcal U'}_P$ is simply connected when $P$ is a canonical singularity.} and  ${\mathcal U'}_P$  is contratctible to  the exceptional locus $\Gamma_P$. 
  Hence $H_1 ({\mathcal U'}_P)=H_3 ({\mathcal U'}_P)=H_4 ({\mathcal U'}_P)=H_5 ({\mathcal U'}_P)=0$.  
    Let $X'$ be the complex analytic threefold obtained by patching in $X \setminus \cup \{ {\mathcal U}_P\}$ the collection of the singular neighbhoroods   $\{ {\mathcal U'}_P\}$.   Let
  $f: X' \to X$ be the induced morphism. 
  The  commutative diagrams obtained by combining the Mayer-Vietoris sequences in homology and cohomology for  $X= (X \setminus \cup _P  {P} ) \coprod _P {\mathcal U}_P$ and  $X'= (X \setminus \cup _P{\Gamma_P }) \coprod _P {\mathcal U'}_P$ 
 imply that:\\
  $H_1(X) \simeq H_1(X') $, $\ H_5(X) \simeq H_5(X')$, $ \ H^4(X) \simeq H^4(X') \ $, and $H^5(X) \simeq H^5(X')$.   
  
Theorems \ref{kltRHMT} and  \ref{klt} show that $X'$ is a rational homology manifold, then  Poincar\'{e} duality on $X'$  holds and the top arrows in the diagrams below are isomorphisms:

\begin{equation*}
 \xymatrix{
   H^5(X') \ar[r]^{\cap [X']} &   H_1(X')  \ar@<-2pt>[d]_{\simeq} \\
  H^5(X) \ar@<-2pt>[u]_{\simeq} \ar[r]^{\cap {[X]}} &   H_1(X) 
}
 \ \quad \   \quad \ \   \quad   \quad 
 \xymatrix{
   H^4(X') \ar[r]^{\cap [X']} &   H_2(X')  \ar@<-2pt>[d]_{f_*} \\
  H^4(X )\ar@<-2pt>[u]_{\simeq} \ar[r]^{\cap {[X]}} &   H_2(X)  
}
\end{equation*}
The first statement follows immediately. 
The  Mayer-Vietoris sequence implies also that ${f_*:H_2(X') \to H_2(X)}$ is surjective, and thus, if $b_2(X)=b_4(X)$,   ${H^4(X) \stackrel{\cap {[X]} }\rightarrow H_2(X)} $ is an isomorphism, as a surjective morphism between spaces of the same dimension.
\end{proof}

 \begin{proposition}\label{NS} Let $X$ be a projective
  $\Q$-factorial threefold with  isolated rational hypersurface singularities and $h^2(X,\cO_X)=0$, then $b_2(X)=b_4(X)$.
 \end{proposition}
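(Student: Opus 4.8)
The plan is to compare the cohomology of $X$ with that of its smooth locus $X^{\mathrm{sm}} := X \setminus \operatorname{Sing}(X)$ and of a resolution $\rho\colon \widetilde X \to X$, and to show that the only possible discrepancy between $b_2$ and $b_4$ is measured by the restriction maps to the links of the singular points, which I will force to vanish. Write $\Sigma = \operatorname{Sing}(X)$ (finite, as the singularities are isolated) and let $L_p$ be the link of $p\in\Sigma$, a compact oriented real $5$-manifold; set $E=\rho^{-1}(\Sigma)$. First I would record three consequences of the hypotheses to be used as black boxes: (i) since the $p$ are \emph{hypersurface} singularities the link of an isolated $3$-fold hypersurface singularity is $1$-connected, so $H^1(L_p,\Q)=0$ and, by Poincar\'e duality on $L_p$, $H^4(L_p,\Q)=0$ and $\dim H^2(L_p,\Q)=\dim H^3(L_p,\Q)$; (ii) since the singularities are \emph{rational}, Flenner's result \cite{Flenner} identifies $H^2(L_p,\Q)\cong \cl(\cO^h_{X,p})_\Q$, a pure Hodge structure of weight $2$ and type $(1,1)$; (iii) since rational singularities satisfy $R^i\rho_*\cO_{\widetilde X}=0$ for $i>0$, one has $h^{2,0}(\widetilde X)=h^2(\widetilde X,\cO_{\widetilde X})=h^2(X,\cO_X)=0$.

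Next I would set up two long exact sequences. Excision together with the contractibility of small neighbourhoods gives $H^{m}(X,X^{\mathrm{sm}})\cong \bigoplus_p H^{m-1}(L_p)$ for $m\geq 2$; by (i) this yields $H^2(X,X^{\mathrm{sm}})=0$, so the restriction $H^2(X,\Q)\hookrightarrow H^2(X^{\mathrm{sm}},\Q)$ is injective with image the kernel of the connecting map $\delta\colon H^2(X^{\mathrm{sm}})\to \bigoplus_p H^2(L_p)$, which is the sum of the restrictions to punctured neighbourhoods. Hence $b_2(X)=\dim H^2(X^{\mathrm{sm}})-\operatorname{rk}\delta$. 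On the other hand, as $\Sigma$ is finite one has $H^m_c(X^{\mathrm{sm}})\cong H^m(X)$ for $m\geq 2$, so Poincar\'e duality on the oriented open $6$-manifold $X^{\mathrm{sm}}$ gives $H^2(X^{\mathrm{sm}},\Q)\cong H^4_c(X^{\mathrm{sm}},\Q)^\vee\cong H^4(X,\Q)^\vee$, i.e.\ $\dim H^2(X^{\mathrm{sm}})=b_4(X)$. Thus the proposition reduces to the single statement that $\delta=0$, that is, that each restriction $\operatorname{res}_p\colon H^2(X^{\mathrm{sm}},\Q)\to H^2(L_p,\Q)$ vanishes.

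To prove $\operatorname{res}_p=0$ I would argue in two steps. By (ii) the target is pure of weight $2$; since $X^{\mathrm{sm}}$ is smooth, Deligne's theory gives $W_2H^2(X^{\mathrm{sm}})=\operatorname{im}(H^2(\widetilde X)\to H^2(X^{\mathrm{sm}}))$ (restriction from the smooth compactification $\widetilde X$), and as $\operatorname{res}_p$ is a morphism of mixed Hodge structures it must factor through $H^2(\widetilde X)$; so it suffices to show that the composite $H^2(\widetilde X,\Q)\to H^2(L_p,\Q)$ is zero. For this I use (iii): $h^{2,0}(\widetilde X)=0$ forces $H^2(\widetilde X,\Q)=\ns(\widetilde X)_\Q$ to be spanned by divisor classes, so it is enough to treat the class $[\widetilde D]$ of a divisor $\widetilde D$ on $\widetilde X$. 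Restricting to $L_p\simeq \cV_p\setminus p$, where $\rho$ is an isomorphism and $\cV_p$ is a small contractible Stein neighbourhood of $p$, the class $[\widetilde D]|_{L_p}$ equals the image of the global Weil divisor $D':=\rho_*\widetilde D$ under $\cl(X)_\Q\to \cl(\cO^h_{X,p})_\Q\cong H^2(L_p,\Q)$. But $X$ is $\Q$-factorial, so $rD'$ is Cartier for some $r$, hence $\cO_X(rD')$ is trivial on the contractible Stein $\cV_p$ (where $\Pic(\cV_p)=0$), so the local class of $D'$ is torsion and vanishes over $\Q$. This yields $\operatorname{res}_p=0$, whence $\delta=0$ and $b_2(X)=\dim H^2(X^{\mathrm{sm}})=b_4(X)$.

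The main obstacle is the weight-theoretic input of the third paragraph: one must know that $H^2(L_p,\Q)$ is \emph{pure of weight $2$} and that $\operatorname{res}_p$ respects the mixed Hodge structures. Via the local cohomology sequence of the pair $(\rho^{-1}(\cV_p),L_p)$, purity is equivalent to the vanishing $H^3(\rho^{-1}(p),\Q)=0$, which is exactly where rationality of the singularities enters beyond Flenner's identification (for an isolated rational singularity the exceptional fibre carries no degree-$3$ rational cohomology, as one checks e.g.\ for cones, where rationality forces the irregularity of the exceptional surface to vanish). The remaining delicate point is the compatibility of the analytically defined link cohomology with the restriction from the algebraic smooth locus; I would handle this by modelling $L_p$ as $\rho^{-1}(\cV_p)\setminus \rho^{-1}(p)$, the complement of the exceptional divisor in a neighbourhood (an SNC divisor after a log resolution), so that every map in sight is a restriction along an open inclusion of smooth varieties and hence automatically a strict morphism of mixed Hodge structures.
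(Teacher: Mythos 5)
Your overall strategy is sound, and it is genuinely different from the paper's proof, which consists of a single citation to \cite[Theorem 3.2]{NamikawaSteenbrink}. Your two exact sequences plus Poincar\'e--Lefschetz duality on $X^{\mathrm{sm}}$ correctly reduce the statement to the vanishing of $\delta$ (indeed they show $b_4(X)-b_2(X)=\operatorname{rk}\,\delta$), and the way you then deploy each hypothesis is exactly right: rationality together with $h^2(X,\cO_X)=0$ makes $H^2(\widetilde X,\Q)$ algebraic; $\Q$-factoriality makes some multiple of $\rho_*\widetilde D$ Cartier; and a Cartier divisor has trivial line bundle on a contractible Stein neighbourhood, hence torsion class on the link. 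In effect you are reconstructing the Hodge-theoretic argument \emph{behind} the cited Namikawa--Steenbrink theorem rather than quoting it, which is legitimate and instructive.

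The genuine gap is in your last paragraph, i.e.\ the justification of the weight claim for $H^2(L_p,\Q)$. First, the asserted equivalence ``purity $\Leftrightarrow H^3(\rho^{-1}(p),\Q)=0$'' is not correct: the local cohomology sequence $H^2(E)\to H^2(L_p)\to H^3_E(\rho^{-1}(\cV_p))\to H^3(E)$, with $E=\rho^{-1}(p)$, shows that ``weights $\le 2$ on $H^2(L_p)$'' is equivalent to \emph{injectivity} of $H^3_E(\rho^{-1}(\cV_p))\to H^3(E)$; vanishing of $H^3(E,\Q)$ is sufficient but far from necessary. Second, and more seriously, the claim that rationality forces $H^3(E,\Q)=0$ is false as stated, because it is not even independent of the chosen resolution: take the ordinary double point (terminal, hence rational), blow up the origin so that $E\cong \P^1\times\P^1$, then blow up a smooth curve $C\subset E$ of bidegree $(2,2)$ (genus $1$); the new exceptional component is a $\P^1$-bundle over $C$, and Mayer--Vietoris shows that $H^3$ of the new exceptional fibre surjects onto $H^1(C)\otimes H^2(\P^1)\cong \Q^2\neq 0$. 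Rationality only gives $H^i(E,\cO_E)=0$ for $i>0$, which kills the $(0,q)$ and $(q,0)$ Hodge pieces of $H^3(E)$ but not its possible $(1,2)+(2,1)$ or $(1,1)$ pieces; your check for cones does not transfer to arbitrary resolutions of arbitrary rational singularities. Fortunately, the fact you actually need is weaker and is a known theorem requiring no rationality hypothesis at all: the semipurity of links of isolated singularities ($H^k(L_p,\Q)$ has weights $\le k$ for $k\le \dim_{\C}X-1$; Durfee, Steenbrink, or via the decomposition theorem, which exhibits $H^2(L_p,\Q)$ as a direct summand, as a mixed Hodge structure, of the compact variety cohomology $H^2(E,\Q)$, hence of weights $\le 2$). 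Substituting this for your purity argument --- and dropping the unsupported assertion that Flenner's isomorphism \cite{Flenner} endows $H^2(L_p,\Q)$ with a pure $(1,1)$ Hodge structure, which is an isomorphism of groups only and is in any case never used in your proof --- closes the gap, and the rest of your argument goes through.
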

 
\begin{proof}  It follows from \cite[Theorem 3.2]{NamikawaSteenbrink}.
\end{proof}

\begin{corollary}\label{kltI}  Let $X$ be a   projective $\Q$-factorial threefold with klt isolated hypersurface singularities  and $h^2(X,\cO_X)=0$. Then $X$ satisfies rational Poincar\'{e} duality.
\end{corollary}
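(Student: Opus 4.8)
The plan is to deduce the statement directly from Theorem \ref{rPD1}(2) together with Proposition \ref{NS}; the entire content is the reconciliation of their hypotheses. First I would observe that, by Theorem \ref{kltRat}, klt singularities are rational. Hence the klt isolated hypersurface singularities of $X$ are in particular isolated \emph{rational} hypersurface singularities. This is the bridge that lets me feed $X$ into Proposition \ref{NS}, whose statement is phrased in terms of rational (rather than klt) singularities.

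Next, since $X$ is projective and $\Q$-factorial with isolated rational hypersurface singularities and $h^2(X,\cO_X)=0$, Proposition \ref{NS} yields the numerical equality $b_2(X)=b_4(X)$. Finally, $X$ is a projective threefold with isolated klt singularities, so the standing hypotheses of Theorem \ref{rPD1} are met, and the additional assumption required by its part (2), namely $b_2(X)=b_4(X)$, has just been established. Therefore Theorem \ref{rPD1}(2) applies and $X$ satisfies rational Poincar\'e duality, as claimed.

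As for the main obstacle: there is essentially none beyond this bookkeeping, since the genuinely substantive work already lives in the two cited results. Theorem \ref{rPD1} carries the Mayer--Vietoris and $\Q$-factorialization argument producing the isomorphism $H^4(X)\to H_2(X)$ via the surjectivity of $f_*\colon H_2(X')\to H_2(X)$, while Proposition \ref{NS} rests on the Namikawa--Steenbrink input. The one point that must be checked with care is that the single list of hypotheses (projective, $\Q$-factorial, klt isolated hypersurface singularities, $h^2(X,\cO_X)=0$) legitimately activates both earlier results simultaneously; the key realization making this work is exactly the implication klt $\Rightarrow$ rational, so that no separate rationality assumption has to be added to the statement.
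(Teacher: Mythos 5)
Your proposal is correct and follows essentially the same route as the paper, whose proof of Corollary \ref{kltI} is precisely to combine Proposition \ref{NS} (giving $b_2(X)=b_4(X)$) with Theorem \ref{rPD1}(2). Your explicit remark that the bridge is the implication klt $\Rightarrow$ rational (Theorem \ref{kltRat}) is the only step the paper leaves implicit, and it is exactly the right justification for applying Proposition \ref{NS}.
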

\begin{proof} It follows from Theorem \ref{rPD1} and Proposition \ref{NS}.
\end{proof}

\begin{thm}\label{rPD2} Let $X$ be a projective   Gorenstein $\Q$-factorial threefold with terminal singularities and $h^2(X,\cO_X)=0$. Then $X$ satisfies rational Poincar\'{e} duality.\end{thm}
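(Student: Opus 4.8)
The plan is to reduce the statement to the already-established Corollary \ref{kltI} by showing that the hypotheses ``Gorenstein'' and ``terminal'' force the singularities of $X$ to be precisely of the type handled there, namely isolated klt hypersurface singularities. In other words, the work of the proof is entirely structural: once the local analytic type of the singularities is identified, rational Poincar\'e duality is inherited from the prior results, and no new analytic input is required.

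First I would invoke Remark \ref{cdvHyp}: threefold terminal singularities are isolated, and since $X$ is Gorenstein they are analytic hypersurface singularities. Next, terminal singularities are in particular klt (the discrepancy condition $a_i>0$ trivially implies $a_i>-1$), and by Theorem \ref{kltRat} klt singularities are rational. Thus $X$ is a projective $\Q$-factorial threefold with \emph{klt isolated hypersurface} singularities satisfying $h^2(X,\cO_X)=0$, which is exactly the hypothesis of Corollary \ref{kltI}; the conclusion follows immediately. Equivalently, one can run the argument through Theorem \ref{rPD1}(2): since the singularities are isolated, rational (Theorem \ref{kltRat}) and hypersurface, Proposition \ref{NS} yields $b_2(X)=b_4(X)$, and then part (2) of Theorem \ref{rPD1} gives rational Poincar\'e duality directly.

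I do not expect a genuine obstacle here, because all the analytic content has been discharged upstream: the local-to-global Poincar\'e duality comparison of Theorem \ref{rPD1} (built on the analytic $\Q$-factorialization of Theorem \ref{Klqf} and the characterization of rational homology manifolds in Theorem \ref{klt}), together with the Hodge-theoretic equality $b_2=b_4$ of Proposition \ref{NS}, already does the real work. The only point requiring care is to confirm that the hypotheses transfer correctly---specifically that Gorenstein terminal threefold singularities are simultaneously isolated, hypersurface, klt and rational, so that either Corollary \ref{kltI} or the pair (Proposition \ref{NS}, Theorem \ref{rPD1}) applies verbatim---and this is guaranteed by Remark \ref{cdvHyp} together with Theorem \ref{kltRat}.
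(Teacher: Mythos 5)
Your proposal is correct and follows essentially the same route as the paper: the paper's proof likewise observes that threefold Gorenstein terminal singularities are isolated rational hypersurface singularities (citing Theorem \ref{kltRat} and Remark \ref{cdvHyp}) and then applies Corollary \ref{kltI}. Your alternative phrasing via Proposition \ref{NS} and Theorem \ref{rPD1}(2) is not genuinely different, since that is exactly how Corollary \ref{kltI} is itself proved.
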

\begin{proof} In fact, threefold Gorenstein  terminal singularities are isolated rational hypersurface singularities (see Theorem \ref{kltRat}   and Remark \ref{cdvHyp}). The statement follows from the previous Corollary \ref{kltI}. \end{proof}


\begin{corollary} Let $X$ be a projective  minimal 
 threefold
  of Kodaira dimension 0.  Then $X$ satisfies rational Poincar\'{e} duality. 
\end{corollary}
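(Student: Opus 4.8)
The plan is to reduce to the rational Poincar\'e duality results already established and to exploit the rigid structure of minimal threefolds of Kodaira dimension $0$. Since $X$ is a minimal model it is $\Q$-factorial with at worst terminal singularities, which are isolated (Remark \ref{cdvHyp}) and in particular isolated klt; by the abundance theorem for threefolds $K_X$ is numerically trivial and torsion in the class group. I would then split the argument according to whether $h^2(X,\cO_X)$ vanishes, handling the two regimes by the two mechanisms available in the paper (Banagl's rational-homology-manifold criterion, and the $b_2=b_4$ route through Theorem \ref{rPD1}).

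In the case $h^2(X,\cO_X)>0$ I would pass to the global index-one (canonical) cover $\pi\colon Y\to X$ associated to the torsion divisor $K_X$, so that $\omega_Y\simeq\cO_Y$. Then $Y$ is projective (finite over $X$), Gorenstein with canonical singularities, and $X=Y/G$ for a cyclic group $G$ acting freely in codimension one, whence $H^{\ast}(X,\Q)=H^{\ast}(Y,\Q)^{G}$. As $G$ acts holomorphically it acts trivially on $H^{6}(Y,\Q)\simeq\Q$ and preserves the fundamental class, so the cup-product pairing on $Y$ is $G$-equivariant; by semisimplicity of $\Q[G]$-modules its restriction to $G$-invariants is again perfect, and rational Poincar\'e duality descends from $Y$ to $X$. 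It therefore suffices to prove duality for $Y$. Since $\omega_Y\simeq\cO_Y$ and $Y$ is Cohen--Macaulay (Theorem \ref{kltRat}), Serre duality gives $h^{1}(Y,\cO_Y)=h^{2}(Y,\cO_Y)\ge h^{2}(X,\cO_X)>0$, so Banagl's criterion \cite{Banagl2017} shows $Y$ is a rational homology manifold, whence duality by Theorem \ref{ratHomP}.

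In the complementary case $h^2(X,\cO_X)=0$ I would work directly on $X$, which is $\Q$-factorial. By Theorem \ref{rPD1}(1) the map $H^5(X,\Q)\to H_1(X,\Q)$ is already an isomorphism, so by Theorem \ref{rPD1}(2) it remains to show $b_2(X)=b_4(X)$, and I would prove this by comparing ordinary and intersection cohomology. Because the singularities are isolated one has $H^k(X,\Q)\simeq IH^k(X,\Q)$ for $k\ge 4$, and since $\pi_1(L_p)$ is finite \cite{TianXuFund} one has $H^1(L_p,\Q)=0$, forcing $H^2(X,\Q)\hookrightarrow IH^2(X,\Q)$; with Poincar\'e duality for intersection cohomology this yields $b_2(X)\le\dim IH^2(X)=\dim IH^4(X)=b_4(X)$. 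For the reverse inequality I would use $h^2(X,\cO_X)=0$: klt singularities are rational (Theorem \ref{kltRat}), so a resolution $\rho\colon\tilde X\to X$ has $h^{2,0}(\tilde X)=h^2(\tilde X,\cO_{\tilde X})=h^2(X,\cO_X)=0$, hence $H^2(\tilde X,\Q)$ is of pure type $(1,1)$ and spanned by divisor classes. By the decomposition theorem $IH^2(X,\Q)$ is a Hodge-theoretic direct summand of $H^2(\tilde X,\Q)$, so it too is spanned by classes of Weil divisors on $X$; $\Q$-factoriality makes each such divisor $\Q$-Cartier, and its first Chern class lifts the given class to $H^2(X,\Q)$. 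This gives surjectivity of $H^2(X,\Q)\to IH^2(X,\Q)$, hence $b_2(X)=b_4(X)$ and rational Poincar\'e duality by Theorem \ref{rPD1}(2); this step is precisely the extension of Proposition \ref{NS} from isolated hypersurface to isolated klt singularities.

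I expect the main obstacle to be this final surjectivity $H^2(X,\Q)\twoheadrightarrow IH^2(X,\Q)$, i.e.\ knowing that under $h^2(X,\cO_X)=0$ every degree-two intersection-cohomology class is represented by a genuine Weil divisor, so that $\Q$-factoriality can be invoked; this is exactly the ingredient beyond Proposition \ref{NS}. A secondary subtlety is the descent of Poincar\'e duality through the finite quotient $Y\to X$, where one must verify that passage to $G$-invariants preserves a perfect pairing --- which is where working over $\Q$, and hence semisimplicity, is essential.
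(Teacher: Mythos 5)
Your route is genuinely different from the paper's, and the two differ at every step. The paper's proof splits on $h^1(X,\cO_X)$: if $h^1\neq 0$ it applies Banagl's criterion directly to $X$; if $h^1=0$ it invokes the structure theory of minimal threefolds of Kodaira dimension zero ($X$ is a Calabi--Yau or a finite quotient of one) and then quotes Theorem \ref{rPD2}, whose substance is Namikawa--Steenbrink's equality $b_2=b_4$ for isolated rational \emph{hypersurface} singularities (Proposition \ref{NS}). You split on $h^2(X,\cO_X)$ instead. Your first case --- canonical cover $Y$, Serre duality to get $h^1(Y,\cO_Y)>0$, Banagl's criterion applied to $Y$, then descent of the perfect pairing to $G$-invariants --- is correct and complete, and is in some ways more careful than the paper's corresponding step: Banagl's criterion as quoted requires a trivial (not merely torsion) canonical divisor, which the cover provides, and the descent of duality through the finite quotient is made explicit rather than left implicit. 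Moreover, your architecture never needs $\Q$-factoriality of the cover (Banagl's criterion does not ask for it, and in your second case you work directly on $X$, which is $\Q$-factorial by hypothesis), a point the paper's reduction to the Calabi--Yau cover glosses over.

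However, your second case has a genuine gap exactly where you suspect it. The decomposition theorem gives that $IH^2(X,\Q)$ is a Hodge substructure of $H^2(\tilde X,\Q)$, hence pure of type $(1,1)$ when $h^2(X,\cO_X)=0$; but ``direct summand of a space spanned by divisor classes of $\tilde X$'' does not by itself yield that $IH^2(X,\Q)$ is spanned by classes of Weil divisors \emph{on $X$}. The splitting in the decomposition theorem is not canonical, and to convert divisors on $\tilde X$ into divisors on $X$ you must identify the projection $H^2(\tilde X,\Q)\to IH^2(X,\Q)$ with the geometric pushforward $\pi_*\colon H_4(\tilde X,\Q)\to H_4(X,\Q)$ under the isomorphism $IH^2(X,\Q)\simeq H_4(X,\Q)$ valid for isolated singularities. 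This identification is true but requires an argument, for instance: for isolated singularities $IH^2(X,\Q)\simeq H^2(X\setminus\Sigma,\Q)$ compatibly with mixed Hodge structures, so purity of $IH^2$ forces $H^2(X\setminus\Sigma,\Q)$ to be pure of weight $2$; since the image of the restriction $H^2(\tilde X,\Q)\to H^2(\tilde X\setminus E,\Q)=H^2(X\setminus\Sigma,\Q)$ is exactly the weight-$2$ part, restriction is surjective, and composing with Poincar\'e duality on the smooth locus identifies it with $\pi_*$, which carries divisor classes to Weil divisor classes. With that supplement your argument closes (the injectivity via $H^1(L_p,\Q)=0$ and the lifting via $\Q$-factoriality are fine), and it would indeed prove the strengthening of Proposition \ref{NS} from hypersurface to arbitrary isolated klt singularities that you announce. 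As written, though, this step is asserted rather than proved --- and it is precisely the content that the paper avoids by reducing, via the Kodaira-dimension-zero structure theory, to the Gorenstein case where Namikawa--Steenbrink applies.
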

\begin{proof} If  $h^1(X,\cO_X) \neq 0$  then $X$ is a rational homology manifold \cite[Remark 6.4, page 29]{Banagl2017}.
If $h^1(X,\cO_X) = 0$  then $X$ is either a Calabi-Yau or a quotient of a Calabi-Yau by a finite group and the statement follows from Theorem  \ref{rPD2}.
 \end{proof}
In particular:
\begin{corollary}\label{chiklt}   Let $X$ be  a projective  $\Q$-factorial  Calabi-Yau  or  Gorenstein  Fano threefold with terminal singularities. Then $$\chi_{top}(X)=2 \{ 1
+ b_2(X)\} -b_3\,.$$
\end{corollary}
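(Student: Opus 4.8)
The plan is to derive the formula purely from rational Poincar\'e duality together with the two inputs $b_0(X)=1$ and $b_1(X)=0$. First I would verify that both cases fall under Theorem~\ref{rPD2}. Terminal singularities are canonical, so a Calabi--Yau $X$ is Gorenstein by Remark~\ref{cdvHyp} and has $h^2(X,\cO_X)=0$ by definition; a Gorenstein Fano $X$ is Gorenstein by hypothesis, and Kawamata--Viehweg vanishing applied with the ample divisor $-K_X$ (so that $K_X+(-K_X)=0$) gives $h^i(X,\cO_X)=0$ for all $i>0$, in particular $h^2(X,\cO_X)=0$. In either case Theorem~\ref{rPD2} applies, so $X$ satisfies rational Poincar\'e duality and $b_i(X)=b_{6-i}(X)$ for all $i$.

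Granting this, I would expand the Euler characteristic and fold it with duality:
\[
\chi_{top}(X)=\sum_{i=0}^{6}(-1)^i b_i = 2b_0 - 2b_1 + 2b_2 - b_3,
\]
using $b_6=b_0$, $b_5=b_1$, $b_4=b_2$. Since every variety here is assumed connected, $b_0=1$, and so the desired identity $\chi_{top}(X)=2\{1+b_2(X)\}-b_3$ reduces to the single remaining claim $b_1(X)=0$.

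The heart of the proof is therefore establishing $b_1(X)=0$, and this is where I expect the only genuine subtlety. Let $\rho\colon Y\to X$ be a resolution. Since terminal singularities are rational (Theorem~\ref{kltRat}), the Leray spectral sequence gives $h^1(Y,\cO_Y)=h^1(X,\cO_X)$, which vanishes in both cases (by definition for Calabi--Yau, by the vanishing above for Fano). As $Y$ is smooth projective, Hodge symmetry yields $b_1(Y)=2h^1(Y,\cO_Y)=0$. Finally, because threefold terminal singularities are isolated, $X$ is the homotopy pushout of $Y\leftarrow E\to\Sigma$, where $\Sigma=\mathrm{Sing}(X)$ is finite and $E=\rho^{-1}(\Sigma)$. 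In the associated Mayer--Vietoris sequence the degree-zero map $H^0(Y)\oplus H^0(\Sigma)\to H^0(E)$ is surjective (each component of $E$ collapses to a point), so the connecting map into $H^1(X)$ vanishes and $\rho^*\colon H^1(X,\Q)\hookrightarrow H^1(Y,\Q)$ is injective. Hence $b_1(X)\le b_1(Y)=0$, giving $b_1(X)=0$ and the formula.

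The main obstacle is precisely this last step: $X$ is singular, so one cannot read $b_1$ off Hodge numbers directly, and the passage from the coherent vanishing $h^1(X,\cO_X)=0$ to the topological vanishing $b_1(X)=0$ must be routed through the resolution. The injectivity of $\rho^*$ on $H^1$ --- equivalently, the statement that the isolated singular points contribute nothing to $H^1$ --- is the one point that requires care; everything else is formal once Theorem~\ref{rPD2} supplies duality.
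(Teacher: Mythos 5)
Your proposal is correct and runs along the same route as the paper: both rest on Theorem~\ref{rPD2} (after checking that a Calabi--Yau with terminal singularities is Gorenstein via Remark~\ref{cdvHyp}, and that a Gorenstein Fano has $h^2(X,\cO_X)=0$ by Kawamata--Viehweg), and then fold $\chi_{top}$ using $b_i=b_{6-i}$. The difference is one of completeness: the paper's entire proof is the sentence ``In fact $X$ satisfies rational Poincar\'e duality,'' which leaves implicit exactly the point you isolate, namely that the stated formula also needs $b_1(X)=0$ --- duality alone only gives $\chi_{top}(X)=2\{1-b_1(X)+b_2(X)\}-b_3$, which is why the more general Corollary~\ref{chiGor} retains the $b_1$ term. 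Your passage from $h^1(X,\cO_X)=0$ to $b_1(X)=0$ (rational singularities, Leray, Hodge symmetry on the resolution, then injectivity of $\rho^*$ on $H^1$) is a genuine and correct supplement; in the paper this step is most naturally extracted from Srinivas's Theorem~\ref{srinivasth} in the appendix, whose proof shows $H^1(X,\Z)\xrightarrow{\sim} H^1(Y,\Z)$ whenever $R^1\rho_*\cO_Y=0$. One small precision in your Mayer--Vietoris step: surjectivity of $H^0(Y)\oplus H^0(\Sigma)\to H^0(E)$ requires each fiber $\rho^{-1}(p)$ to be \emph{connected}, not merely that each component of $E$ maps to a point (two components over the same $p$ would break surjectivity); connectedness holds by Zariski's main theorem since $X$ is normal, so the argument stands once this is invoked.
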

\begin{proof} In fact $X$ satisfies rational Poincar\'{e} duality. 

\end{proof}

\section{The third Betti number  (and complex deformations of Calabi-Yau threefolds)}\label{cxDefMT}
 
  \subsection{Milnor and Tyurina numbers}

 Let
    $(\mathcal U, 0)\subset \C^{n+1}$ be a neighborhood of an isolated hypersurface singularity $P=0$,  defined by ${f=0}$.
\begin{definition}The Milnor number of $P$  can be defined as
\begin{align*}
m(P) =\dim_{\mathbb C}( \mathbb C\{ x_1, \ldots, x_{n+1} \}/{ \small{<  \frac{\partial f}{\partial x_1}, \ldots,   \frac{\partial f}{\partial x_{n+1}}  > }}) \,.
\end{align*}
\end{definition}
\begin{definition}
The {Tyurina number}  $\tau(P)$  is the dimension of the space of versal deformations of the hypersurface singularity at $P$ in $\mathcal U$ and it is computed algebraically as
\begin{align*}
\tau(P) = \dim_{\mathbb C} (\mathbb C\{ x_1, \ldots, x_{n+1} \}/< f, \frac{\partial f}{\partial x_1}, \ldots,   \frac{\partial f}{\partial x_{n+1}}  >).
\end{align*}
\end{definition}
\begin{remark}\label{Saito}$m(P) \geq \tau(P)$ and 
Saito  proved that $\tau(P) = m(P)$ if and only if  $P$ is a weighted hypersurface singularity  \cite{SaitoK1971, Looijenga2013}.
 Saito's  Theorem has been generalized to complete intersections by Greuel \cite{Greuel1980}.
\end{remark}
 $m(P)$ and $\tau(P)$ are also  computable by SINGULAR \cite{GreuelLossenShustin2007}  and Maple \cite{RossiTerracini}.

\subsection{Complex deformations and $b_3(X)$} 

  \begin{proposition}\label{b3}   Let $X$ be a $\Q$-factorial Calabi-Yau threefold with  terminal singularities  and  let $\cD(X)$ denote the dimension of the Kuranishi space  of $X$, the space of complex deformations of $X$. Then
\begin{equation*}
\cD(X)=\cD(X_t) =   \frac{1}{2} b_3(X_t) -1 =    \frac{1}{2} \{b_3(X)+ \sum_{ P} m(P) \}-1,
\end{equation*}
  where $X_t$ is the smoothing of $X$ and $m(P)$  the Milnor number of  the singular point $P$.
   \end{proposition}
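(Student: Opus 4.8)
The plan is to prove the four quantities equal by a chain: first the deformation-theoretic equality $\cD(X)=\cD(X_t)$, then the smooth Calabi--Yau identity $\cD(X_t)=\frac12 b_3(X_t)-1$, and finally the topological comparison $b_3(X_t)=b_3(X)+\sum_P m(P)$. Throughout I use that, by Remark \ref{cdvHyp} and Theorem \ref{kltRat}, the singularities of $X$ are isolated Gorenstein, hence hypersurface (cDV) and rational, and that a global smoothing $X_t$ exists and is a smooth Calabi--Yau threefold: existence follows from the smoothability of the local cDV singularities together with a global smoothing result (Namikawa--Steenbrink), while $X_t$ is Calabi--Yau because the relative canonical bundle is trivial and $h^1(\cO_{X_t}),h^2(\cO_{X_t})$ vanish by upper semicontinuity from $h^1(\cO_X)=h^2(\cO_X)=0$ together with Serre duality.

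For $\cD(X)=\cD(X_t)$ I would invoke the unobstructedness of the deformations of $X$ (the Bogomolov--Tian--Todorov theorem in the singular Calabi--Yau setting, due to Ran, Tian, Kawamata and Namikawa). This makes the Kuranishi space $\mathrm{Def}(X)$ smooth, hence an irreducible germ of pure dimension $\cD(X)$, whose generic member is the smoothing $X_t$. Since the Kuranishi family of $X$ is versal and versality is an open condition, its restriction to a neighbourhood of the point $[X_t]$ is versal for the smooth threefold $X_t$, whose deformations are themselves unobstructed of dimension $h^1(X_t,T_{X_t})=h^{2,1}(X_t)=\cD(X_t)$. Comparing the local dimension of the smooth, pure-dimensional germ $\mathrm{Def}(X)$ at the generic point $[X_t]$ with its dimension at $[X]$ gives $\cD(X)=\cD(X_t)$. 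The middle equality $\cD(X_t)=\frac12 b_3(X_t)-1$ is then immediate from \eqref{CY} applied to the smooth Calabi--Yau $X_t$.

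The heart of the argument is the last equality $b_3(X_t)=b_3(X)+\sum_P m(P)$. I would set it up with the standard smoothing picture: in a one-parameter smoothing $\mathcal X\to\Delta$, Ehresmann's theorem identifies $X^\circ:=X\setminus\{P\}$ with the complement $X_t\setminus\bigsqcup_P \bar F_P$ of the local Milnor fibres, so that $X=X^\circ\cup\bigsqcup_P C_P$ (cones on the links $L_P$) and $X_t=X^\circ\cup\bigsqcup_P \bar F_P$ are glued along the same links. For an isolated hypersurface singularity of a threefold the Milnor fibre is $2$-connected with $\bar F_P\simeq\bigvee^{m(P)}S^3$, so $\bar F_P$ and $C_P$ have the same rational homology in every degree except $3$. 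Comparing the two Mayer--Vietoris sequences, and using that the links of terminal singularities have finite fundamental group (cf. the proof of Theorem \ref{klt}), whence $H_1(L_P;\Q)=0$, I would show $b_i(X)=b_i(X_t)$ for all $i\notin\{3,4\}$; in particular $b_2(X)=b_2(X_t)$ drops out because $H_2(\bar F_P;\Q)=H_2(C_P;\Q)=0$.

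The remaining and crucial point is that $b_4$ likewise does not jump, and this is exactly where $\Q$-factoriality enters: by Proposition \ref{NS} (Namikawa--Steenbrink), a projective $\Q$-factorial threefold with isolated rational hypersurface singularities and $h^2(\cO_X)=0$ satisfies $b_2(X)=b_4(X)$. Combining $b_2(X)=b_4(X)$ with $b_2(X)=b_2(X_t)$ and with Poincar\'e duality on the smooth $X_t$ (giving $b_2(X_t)=b_4(X_t)$) yields $b_4(X)=b_4(X_t)$. Finally the Milnor-fibre computation $\chi(\bar F_P)=1-m(P)$ against $\chi(C_P)=1$ gives $\chi_{top}(X_t)-\chi_{top}(X)=-\sum_P m(P)$; since all Betti numbers other than possibly $b_3$ and $b_4$ agree and $b_4$ agrees, this forces $b_3(X_t)-b_3(X)=\sum_P m(P)$. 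I expect this last step to be the main obstacle: without $\Q$-factoriality the number $b_4$ may jump, absorbing part of the vanishing cohomology and lowering $b_3(X_t)-b_3(X)$ below $\sum_P m(P)$ (already visible for a single node, whose vanishing $3$-cycle may die in $X^\circ$), so controlling $b_4$ via Proposition \ref{NS} is the essential use of the hypothesis, with the global smoothing and the unobstructedness theorem as the deep external inputs.
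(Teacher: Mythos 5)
Your proposal is correct, but it takes a genuinely different route from the paper. The paper's proof of Proposition~\ref{b3} is essentially a two-line citation: Theorem~1.3 of Namikawa--Steenbrink gives the existence of the Calabi--Yau smoothing $X_t$ (and the unobstructedness behind $\cD(X)=\cD(X_t)$), and Theorem~3.2 of the same paper is quoted verbatim for the formula $b_3(X)=b_3(X_t)-\sum_P m(P)$. You instead reconstruct that Betti-number comparison by hand: the two Mayer--Vietoris decompositions of $X$ and $X_t$ along the links, Milnor's theorem that the local fibres are wedges of $m(P)$ three-spheres, the Euler-characteristic count $\chi_{top}(X_t)-\chi_{top}(X)=-\sum_P m(P)$, and Proposition~\ref{NS} to force $b_4(X)=b_4(X_t)$. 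This is a valid argument, and its virtue is that it makes explicit exactly where $\Q$-factoriality enters --- namely in killing the defect $b_4-b_2$, so that the entire vanishing cohomology lands in degree $3$ --- which in the paper is invisible inside the citation. Two caveats. First, your step $\cD(X)=\cD(X_t)$ uses openness of versality, but versality alone only gives $\cD(X)\geq \cD(X_t)$; to get equality you need the Kuranishi family of $X$ to be \emph{universal} (not merely versal) at nearby points, which holds here because $h^0(T_{X_t})=0$ for a smooth Calabi--Yau threefold, but this should be said. Second, your derivation is not actually independent of Namikawa--Steenbrink Theorem~3.2: Proposition~\ref{NS}, your key input for controlling $b_4$, is proved in the paper by citing that very theorem, so you are in effect re-deriving one assertion of their theorem from another; this is legitimate within the paper's logical order (Proposition~\ref{NS} precedes Proposition~\ref{b3}), but it tempers the claim of self-containedness.
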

    \begin{proposition}\label{b3T}   Let $X$ be a $\Q$-factorial Calabi-Yau threefold with  terminal singularities which are weighted hypersurface singularities and  let $\cD(X)$ denote the dimension of the Kuranishi space  of $X$, the space of complex deformations of $X$. Then
\begin{equation*}
\cD(X)= \frac{1}{2}\{b_3(X)+ \sum_{P} \tau(P) \}-1,
\end{equation*}
  where  $\tau(P)$  is the  Tyurina number of  the singular points $P$.
  
  \end{proposition}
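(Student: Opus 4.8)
The plan is to obtain Proposition \ref{b3T} directly from Proposition \ref{b3}, by replacing each Milnor number with the corresponding Tyurina number via Saito's criterion for weighted homogeneous singularities. The entire substance of the statement is already contained in these two earlier results, so the argument is essentially a substitution.

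First I would apply Proposition \ref{b3} verbatim: since $X$ is a $\Q$-factorial Calabi-Yau threefold with terminal singularities, that proposition yields
\begin{equation*}
\cD(X) = \frac{1}{2}\left\{ b_3(X) + \sum_P m(P) \right\} - 1,
\end{equation*}
where the sum runs over the finitely many isolated singular points $P$ of $X$ and $m(P)$ is the Milnor number of the hypersurface singularity at $P$. For this to make sense I would record that threefold terminal singularities are isolated and that, on a Calabi-Yau threefold, they are Gorenstein and hence analytic hypersurface singularities (Remark \ref{cdvHyp}); this guarantees each $m(P)$ is defined and places every $P$ in the setting of an isolated hypersurface singularity.

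Next I would invoke Saito's theorem as stated in Remark \ref{Saito}: for an isolated hypersurface singularity one has $\tau(P) = m(P)$ precisely when $P$ is a weighted (quasi-homogeneous) hypersurface singularity. By the hypothesis of Proposition \ref{b3T} every singularity of $X$ is of this type, so $m(P) = \tau(P)$ for each $P$. Substituting this equality into the formula above gives
\begin{equation*}
\cD(X) = \frac{1}{2}\left\{ b_3(X) + \sum_P \tau(P) \right\} - 1,
\end{equation*}
which is the desired identity.

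Because Proposition \ref{b3} and Saito's theorem do all the work, there is no genuine obstacle here. The only point meriting a moment's care is to confirm that the singularities appearing in Proposition \ref{b3} are exactly the isolated hypersurface singularities to which Saito's criterion applies, and this is ensured by the fact that a $\Q$-factorial Calabi-Yau threefold with terminal singularities is Gorenstein with isolated analytic hypersurface singularities (Remark \ref{cdvHyp}).
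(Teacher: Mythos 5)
Your proposal is correct and follows essentially the same route as the paper: the paper proves Propositions \ref{b3} and \ref{b3T} together, with the weighted-hypersurface case obtained exactly as you do, by combining the Milnor-number formula (via Namikawa--Steenbrink, which underlies Proposition \ref{b3}) with Saito's equality $m(P)=\tau(P)$ from Remark \ref{Saito}. Your added care in checking that the terminal singularities are isolated Gorenstein hypersurface singularities (Remark \ref{cdvHyp}) matches the paper's opening observation that the singularities are hypersurface singularities because they are terminal of index $1$.
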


  \begin{proof}[Proof of Propositions \ref{b3} and \ref{b3T}] The singularities are hypersurface singularities since they are terminal of index 1.
Proposition \ref{b3}  and Proposition \ref{b3T} follow from Theorems 1.3 and  3.2 in \cite{NamikawaSteenbrink} and from Remark \ref{Saito} above \cite{SaitoK1971}. Theorem 1.3 proves that a $\Q$-factorial Calabi-Yau threefold with  terminal singularities admits a smoothing to a Calabi-Yau $X_t$. Theorem 3.2  also proves that if  a  threefold $X$ with  isolated rational hypersurface singularities has a smoothing $X_t$ and ${h^2(X,\cO_X)=0}$, then
$b_3(X)= b_3(X_t) -  \sum_{\text{sing } P} m(P) $, where $m(P)$ are the Milnor numbers of the singularities.
\end{proof}

\begin{remark}Proposition \ref{b3}  implies that $\frac{1}{2} b_3(X) + \frac{1}{2}\sum_P m(P) \in \mathbb Z$, but in general $\frac{1}{2} b_3(X) \notin \mathbb Z$ because the Hodge decomposition and Hodge duality may not hold.
The Kleinian hypersurface singularities  of type $A_{a-1}$ of  Example \ref{A_a-1sing} are weighted hypersurface singularities,  and thus ${m(P) = \tau(P) = a-1} $.
They are rational homology manifolds  if and only if $a$ is odd, in which case both $b_3=2 h^{1,2}$ and $a-1$ are even.
\end{remark}

\begin{remark}  \label{remarklocalized}
If  $X$ is a Calabi-Yau variety with $\Q$-factorial terminal singularities, the dimension of the space of complex deformations splits into a "localized" and  ``non-localized" contribution, given by the dimension of the versal deformation space of the singularities and, respectively,  the remaining deformations:
\begin{align}\label{CdefY3a}
\cd(X) = \underbrace{-1+ \frac{1}{2}\{ b_3(X) + \sum_P ( {m(P)-2\tau(P)}) \}}_{non-localized}+  \underbrace{\sum_P {\tau(P)}}_{localized} \,.
\end{align}
 In Section  \ref{FtheoryInterpretation} we present a natural interpretation  of this decomposition for physics.
\end{remark}
 The  decomposition  (\ref{CdefY3a}) suggests the existence of a ``local-to-global principle" for deformations of Calabi-Yau threefolds with $\Q$-factorial terminal singularities:
 \begin{conjecture}\label{localtoglobaldef}
There exists a natural decomposition of the Kuranishi space of $X$ into the space of complex structure deformations of $X$ which deform the isolated singularities, whose dimension is the sum of the dimensions of the versal deformations (Tyurina numbers), and  the remaining space of deformations of $X$ which do not change the location or form of the isolated singularities. 
\end{conjecture}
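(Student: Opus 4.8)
The natural framework is the deformation theory of the singular space $X$ governed by the Lichtenbaum--Schlessinger cotangent functors $T^{i}_{X}$ and the local--global spectral sequence
\[
E_2^{p,q}=H^{p}(X,\mathcal{T}^{q}_{X})\ \Longrightarrow\ T^{p+q}_{X},
\]
where $\mathcal{T}^{q}_{X}$ is the sheafification of the local deformation functor. The first step is to use that the singularities are isolated hypersurface singularities of index one (Remark \ref{cdvHyp}): then $\mathcal{T}^{0}_{X}=\Theta_{X}$ is the tangent sheaf, while for $q\geq 1$ each $\mathcal{T}^{q}_{X}$ is a skyscraper supported on the singular points, with $\mathcal{T}^{1}_{X}=\bigoplus_{P}T^{1}_{X,P}$ and $\dim_{\C}T^{1}_{X,P}=\tau(P)$ (the Tyurina algebra of the germ). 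Since a skyscraper has no higher cohomology, $E_2^{p,q}=0$ for $p\geq 1$, $q\geq 1$, and the spectral sequence collapses to the low-degree exact sequence
\[
0\to H^{1}(X,\Theta_{X})\to T^{1}_{X}\xrightarrow{\ r\ }\bigoplus_{P}T^{1}_{X,P}\xrightarrow{\ o\ }H^{2}(X,\Theta_{X}).
\]
Here $H^{1}(X,\Theta_{X})$ is exactly the space of locally trivial, \emph{equisingular} first-order deformations --- those preserving the analytic type and location of each singular point --- while the localisation map $r$ records the deformation induced on each germ. This is the infinitesimal shadow of the conjectured splitting.

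The crux is to prove that $r$ is \emph{surjective}, that is, that the obstruction map $o$ vanishes and every local deformation of the germs is induced by a global deformation of $X$. I would derive this directly from the Calabi--Yau and $\Q$-factorial hypotheses, rather than from any dimension count, which would be circular. The $\Q$-factorial condition should be essential here: the vanishing of Kawamata's defect $\sigma(X)=0$ (Remark \ref{sigmaandQf}) is the global condition that, in the model case of nodal threefolds studied by Friedman, governs precisely whether local smoothing and deformation data globalise, and I would aim to show it forces $o=0$. On top of this I would invoke the $T^{1}$-lifting principle of Ran and Kawamata in the form used by Namikawa--Steenbrink: the same mechanism that makes the Kuranishi space of $X$ smooth should lift each local first-order deformation to a global one, despite the fact that $H^{2}(X,\Theta_{X})\neq 0$ in general. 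Granting surjectivity, the sequence
\[
0\to H^{1}(X,\Theta_{X})\to T^{1}_{X}\xrightarrow{\ r\ }\bigoplus_{P}T^{1}_{X,P}\to 0
\]
is exact, and the identity $\dim H^{1}(X,\Theta_{X})=\cd(X)-\sum_{P}\tau(P)$ reproduces the non-localized dimension of \eqref{CdefY3a} as a consistency check.

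With surjectivity in hand, the plan is to lift this infinitesimal splitting to an honest decomposition of the Kuranishi space, which is smooth of dimension $\cd(X)$ by Propositions \ref{b3} and \ref{b3T}. For the \emph{localized} factor I would glue the miniversal deformations $\mathcal{X}_{P}\to T^{1}_{X,P}$ of the individual germs into a deformation of $X$ over $\prod_{P}T^{1}_{X,P}$; surjectivity of $r$ together with unobstructedness should force the gluing obstructions to vanish, producing a smooth sub-Kuranishi space of dimension $\sum_{P}\tau(P)$ whose general member deforms, and generically smooths, the singularities (compatibly with the smoothing $X_{t}$ of Proposition \ref{b3}). The complementary \emph{non-localized} factor is the equisingular sub-Kuranishi space with tangent space $H^{1}(X,\Theta_{X})$, along which $X$ varies with its singular points rigidly preserved. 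Smoothness of the total Kuranishi space should then exhibit it, at least locally near the base point, as the product of these two smooth strata.

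The main obstacle is concentrated in the surjectivity of $r$ measured against the non-vanishing of $H^{2}(X,\Theta_{X})$ --- this is where the Calabi--Yau and $\Q$-factorial conditions must do the real work --- and in the \emph{naturality} demanded by the word ``natural'' in the statement. A short exact sequence of vector spaces splits, but not canonically; making the decomposition canonical would require identifying the non-localized summand with a distinguished piece of $H^{3}$, presumably through the limit mixed Hodge structure of the smoothing $X_{t}$ and the weight filtration carried by the vanishing cohomology of the Milnor fibres, whose ranks are the Milnor numbers $m(P)$ entering \eqref{CdefY3a}. I expect this Hodge-theoretic canonicity, rather than the dimension bookkeeping, to be the genuinely hard and possibly still open part of the conjecture.
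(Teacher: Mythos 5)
First, calibrate what you are being compared against: the paper does \emph{not} prove this statement. It appears as Conjecture \ref{localtoglobaldef}, and the only support the paper offers is the numerical identity (\ref{CdefY3a}) of Remark \ref{remarklocalized}, which splits the dimension $\cd(X)$ into $\sum_P \tau(P)$ plus a remainder; the geometric decomposition of the Kuranishi space itself is explicitly left open. So the question is whether your argument settles the conjecture outright, and it does not. Your infinitesimal framework is correct and is the natural one: for index-one terminal (hence isolated hypersurface) singularities the sheaves $\mathcal{T}^q_X$, $q\geq 1$, are skyscrapers, $\dim_{\C} T^1_{X,P}=\tau(P)$, the local-to-global spectral sequence degenerates to the five-term sequence you write, and $H^1(X,\Theta_X)$ is the space of locally trivial (equisingular) first-order deformations. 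This correctly reduces the conjecture to three items: (i) surjectivity of the localization map $r$; (ii) promotion of the resulting exact sequence of tangent spaces to a decomposition of the Kuranishi germ; (iii) naturality of that decomposition.

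The gap is that (i)--(iii) are precisely where your text stops being a proof and becomes a plan. For (i) you say you ``would aim to show'' that $\Q$-factoriality forces the obstruction $o$ to vanish, invoking $T^1$-lifting and Friedman's picture for nodes; but the results actually available (Namikawa's unobstructedness, the Namikawa--Steenbrink smoothing theorem used in Proposition \ref{b3}) give smoothness of the Kuranishi space and the existence of \emph{one} global deformation smoothing every germ, which is strictly weaker than surjectivity of $r$, and you never carry out the duality argument in which $\Q$-factoriality would actually have to enter. For (ii), even granting surjectivity, what one obtains is a smooth morphism of germs $\mathrm{Def}(X)\to \prod_P \mathrm{Def}(X_P)$; your claim that ``surjectivity together with unobstructedness should force the gluing obstructions to vanish'' is this smoothness restated, not derived. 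And for (iii), a smooth fibration over a smooth base is a product only after a choice, just as a short exact sequence of vector spaces splits non-canonically, so the word ``natural'' in the conjecture --- which is its real content --- is untouched; you concede as much in your final paragraph. In short: you have given a well-informed reduction of Conjecture \ref{localtoglobaldef} to the surjectivity of $r$ plus a canonical splitting, together with a consistency check against (\ref{CdefY3a}), but those items \emph{are} the conjecture, and they remain open in your write-up exactly as they are in the paper.
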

Note also that in the general hypothesis considered in Section \ref{chitop} and \cite{ArrasGrassiWeigand} $m_P= \tau_P$.

\section{The second Betti number  (and K\"ahler deformations);  topological Euler characteristic}\label{b2}

 Let   $X$  be a (normal) complex threefold with ${h^2(X,\cO_X)=0}$. The exponential sequence, see for example \cite[pg. 142]{GrauertRemmert}, implies that $b_2(X)$ is the rank of the N\'eron-Severi group, namely $b_2(X)=\kD(X)$ (Section \ref{motivation}).

More generally  the following holds:

\begin{thm}[Srinivas, see Appendix]\label{srinivasth} Let X be a normal projective variety over the field $\C $ of complex numbers. Let $\pi : Y  \to X $ be a resolution of singularities. Assume $R^1 \pi_*(\cO _Y)= 0$ (this condition is independent of the choice of resolution). Then
\begin{enumerate}
\item  the singular cohomology $H^2(X,\Z) $ supports a pure Hodge structure;
\item  the  N\'eron-Severi group, 
 $\ns(X)\stackrel{def}=c_1 (Pic(X)) \subset  H^2(X,\Z)$ coincides with the subgroup of (1, 1) classes, i.e. with the subgroup\\
$\{\alpha \in H^2(X,\Z) | \alpha_\C  \in H^2(X,\C)\text{  is of type } (1,1)\}.$
\end{enumerate}
\end{thm}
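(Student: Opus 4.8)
The plan is to transport the classical Hodge structure from the smooth projective resolution $Y$ down to $X$, reading the coherent data off the hypothesis $R^1\pi_*\cO_Y=0$. Two inputs are crucial. First, since $X$ is normal, $\pi_*\cO_Y=\cO_X$, so the Leray spectral sequence for $\cO$ together with $R^1\pi_*\cO_Y=0$ yields an isomorphism $H^1(X,\cO_X)\xrightarrow{\sim}H^1(Y,\cO_Y)$ and an injection $H^2(X,\cO_X)\hookrightarrow H^2(Y,\cO_Y)$. Second, I would put Deligne's functorial mixed Hodge structure on $H^2(X,\Q)$; since $X$ is proper its weights are $\le 2$, the structure $H^2(Y,\Q)$ is pure of weight two, and $\pi^*\colon H^2(X,\Q)\to H^2(Y,\Q)$ is a morphism of mixed Hodge structures.

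For part (1) I would prove purity, i.e. $W_1H^2(X,\Q)=0$. As the target is pure of weight two, $\pi^*$ annihilates $W_1H^2(X,\Q)$, so $W_1\subseteq\ker\pi^*$, and it suffices to show $\pi^*$ is injective on $\mathrm{Gr}_F^0$. Here I would invoke the identification $\mathrm{Gr}_F^0H^2(X,\C)\cong H^2(X,\cO_X)$, under which $\mathrm{Gr}_F^0(\pi^*)$ becomes the coherent injection $H^2(X,\cO_X)\hookrightarrow H^2(Y,\cO_Y)=\mathrm{Gr}_F^0H^2(Y,\C)$ from the first step, hence is injective. Since $\mathrm{Gr}_F^0$ is exact on mixed Hodge structures, $W_1\subseteq\ker\pi^*$ forces $\mathrm{Gr}_F^0\,\mathrm{Gr}^W_wW_1=0$ for $w\le1$, and Hodge symmetry on the pure pieces of weight $0$ and $1$ (types $(0,0)$ and $(1,0)\oplus(0,1)$ respectively) then forces $\mathrm{Gr}^W_wW_1=0$. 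Thus $W_1=0$, $H^2(X,\Q)$ is pure of weight two, and $H^{0,2}=H^2(X,\cO_X)$.

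For part (2) I would feed the exponential sequence $0\to\Z\to\cO_X\to\cO_X^\ast\to 0$ into the Hodge structure just produced. Its long exact sequence exhibits $\ns(X)=c_1(\Pic(X))=\ker\bigl(\delta\colon H^2(X,\Z)\to H^2(X,\cO_X)\bigr)$, where $\delta$ is the connecting map. After part (1), $\delta_\C$ is the projection $H^2(X,\C)\to H^{0,2}$, so a real, in particular integral, class lies in $\ker\delta$ exactly when its $(0,2)$-component, equivalently its conjugate $(2,0)$-component, vanishes, i.e. when it is of type $(1,1)$. This identifies $\ns(X)$ with $H^2(X,\Z)\cap H^{1,1}$, as claimed.

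The main obstacle is the identification $\mathrm{Gr}_F^0H^2(X,\C)\cong H^2(X,\cO_X)$ compatibly with $\pi^*$. In general $\mathrm{Gr}_F^0H^2(X)$ is computed by the zeroth graded piece $\underline\Omega_X^0$ of the filtered (Du Bois) de Rham complex, and the natural map $\cO_X\to\underline\Omega_X^0$ need not be a quasi-isomorphism. The key point is that $R^1\pi_*\cO_Y=0$ together with $\pi_*\cO_Y=\cO_X$ gives $\tau_{\le1}R\pi_*\cO_Y\cong\cO_X$, which controls $\underline\Omega_X^0$ in the degrees relevant for $H^2$ and makes $\mathrm{Gr}_F^0(\pi^*)$ coincide with the coherent injection above; this is the step I would treat most carefully, and the fact that $R^1\pi_*\cO_Y=0$ is independent of the chosen resolution makes the construction well defined.
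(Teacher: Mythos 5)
Your overall architecture (Deligne's mixed Hodge structure, strictness, Hodge symmetry on the weight $0$ and $1$ pieces to kill $W_1$) is sound, and you correctly isolate the load-bearing step---but that step has a genuine gap, not a technicality. For Deligne's MHS one has $\mathrm{Gr}_F^0 H^2(X,\C)\cong\mathbb{H}^2(X,\underline{\Omega}_X^0)$, and $\mathrm{Gr}_F^0(\pi^*)$ is $\mathbb{H}^2$ of the natural map $\underline{\Omega}_X^0\to R\pi_*\cO_Y$. You claim that $\tau_{\le1}R\pi_*\cO_Y\cong\cO_X$ controls this in the degrees relevant for $H^2$. It does not: even granting $\mathcal{H}^0\underline{\Omega}_X^0=\cO_X$ (normality/seminormality) and $\mathcal{H}^1\underline{\Omega}_X^0=0$ (which itself requires an argument), the hypercohomology spectral sequence presents $\mathbb{H}^2(\underline{\Omega}_X^0)$ as an extension of a subspace of $H^0(X,\mathcal{H}^2\underline{\Omega}_X^0)$ by $H^2(X,\cO_X)$, and the hypothesis $R^1\pi_*\cO_Y=0$ says nothing about $\mathcal{H}^2\underline{\Omega}_X^0$. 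Concretely, from the standard descent triangle $\underline{\Omega}_X^0\to R\pi_*\cO_Y\oplus\underline{\Omega}^0_\Sigma\to R\pi_*\underline{\Omega}^0_E\xrightarrow{+1}$ (with $\Sigma$ the singular locus and $E=\pi^{-1}(\Sigma)$), the kernel of $\mathcal{H}^2\underline{\Omega}_X^0\to R^2\pi_*\cO_Y$ receives $R^1\pi_*\underline{\Omega}^0_E$, which is in no way constrained by $R^1\pi_*\cO_Y=0$ (it involves $H^1$ of the exceptional fibers, not of $Y$). So the injectivity of $\mathrm{Gr}_F^0(\pi^*)$---the engine of your purity argument, and also what you use silently in part (2) when you identify the exponential connecting map with the projection onto $H^{0,2}$---does not follow from your truncation statement; a posteriori it is true, but only by strictness once one already knows $H^2(X,\Q)\hookrightarrow H^2(Y,\Q)$, i.e.\ it is essentially as hard as the theorem itself.

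The paper's proof (Srinivas's appendix) avoids all Du Bois machinery with one elementary, decisive trick: push the exponential sequence on $Y$ down by $\pi$. Normality gives $\pi_*\Z_Y=\Z_X$, $\pi_*\cO_Y=\cO_X$, $\pi_*\cO^*_Y=\cO^*_X$, so in $0\to\pi_*\Z_Y\to\pi_*\cO_Y\to\pi_*\cO^*_Y\xrightarrow{\partial}R^1\pi_*\Z_Y\to R^1\pi_*\cO_Y$ the first three terms form the (exact) exponential sequence on $X$; hence $\partial=0$, so $R^1\pi_*\Z_Y$ injects into $R^1\pi_*\cO_Y=0$. Leray then gives the \emph{integral} injectivity $H^2(X,\Z)\hookrightarrow H^2(Y,\Z)$ together with $H^2(X,\cO_X)\hookrightarrow H^2(Y,\cO_Y)$. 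Purity is now immediate---$W_1\subseteq\ker\pi^*=0$, no $\mathrm{Gr}_F^0$ analysis needed---with $F^1H^2(X,\C)=H^2(X,\C)\cap F^1H^2(Y,\C)$, and part (2) follows by comparing kernels through $Y$:
\begin{equation*}
\ns(X)=\ker\left(H^2(X,\Z)\to H^2(X,\cO_X)\right)=\ker\left(H^2(X,\Z)\to H^2(Y,\cO_Y)\right)=\ker\left(H^2(X,\Z)\to \frac{H^2(X,\C)}{F^1H^2(X,\C)}\right),
\end{equation*}
using the smooth-variety Lefschetz $(1,1)$ identification on $Y$ rather than any identification of $\mathrm{Gr}_F^0H^2(X,\C)$ with $H^2(X,\cO_X)$. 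If you want to salvage your route, the missing ingredient is precisely such an integral statement ($R^1\pi_*\Z_Y=0$, or injectivity of $\pi^*$ on $H^2(X,\Z)$), which the coherent hypothesis delivers only through this exponential-sequence argument.
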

The above Theorem is used in the Section \ref{BG}.

\begin{corollary}\label{chiGor} Let $X$ be a projective  
$\Q$-factorial
 threefold with isolated klt  hypersurface singularities and $h^2(X,\cO_X)=0$. Then 
$$\chi_{top}(X)=2 \{ 1- b_1(X)+ \kD(X) \} -b_3(X) \,.$$
\end{corollary}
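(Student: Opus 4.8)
The plan is to reduce everything to the symmetry of the Betti numbers and the already-established identification $b_2=\kD$. The key observation is that the hypotheses here are precisely those of Corollary~\ref{kltI}: $X$ is a projective $\Q$-factorial threefold with isolated klt hypersurface singularities and $h^2(X,\cO_X)=0$. Hence Corollary~\ref{kltI} applies directly and tells us that $X$ satisfies rational Poincar\'e duality. In particular, the cup product with the fundamental class gives isomorphisms realizing $b_i(X)=b_{6-i}(X)$ for all $i$, where $b_i(X)=\dim_\Q H^i(X,\Q)$ denotes the ordinary singular Betti number. Note that $X$ need not be a rational homology manifold (cf.\ Example~\ref{terminalconifolds}), so I would be careful to invoke only rational Poincar\'e duality and not Theorem~\ref{ratHomP}; the alternating sum I compute below is that of the ordinary singular Betti numbers, which is by definition $\chi_{top}(X)$.

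First I would record that, since all varieties are assumed normal and connected, $b_0(X)=1$, and by the duality $b_6(X)=b_0(X)=1$. Then I would write out the Euler characteristic and collapse it using $b_4=b_2$, $b_5=b_1$, $b_6=b_0=1$:
\begin{align*}
\chi_{top}(X)
&=\sum_{i=0}^{6}(-1)^i b_i(X)
= b_0-b_1+b_2-b_3+b_4-b_5+b_6 \\
&= 2\{b_0(X)-b_1(X)+b_2(X)\}-b_3(X)
= 2\{1-b_1(X)+b_2(X)\}-b_3(X).
\end{align*}

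Next I would replace $b_2(X)$ by $\kD(X)$. This is exactly the identification recalled at the start of Section~\ref{b2}: because $h^2(X,\cO_X)=0$, the exponential sequence shows that $b_2(X)$ equals the rank of the N\'eron--Severi group, so $b_2(X)=\kD(X)$. Substituting this into the displayed formula yields
\[
\chi_{top}(X)=2\{1-b_1(X)+\kD(X)\}-b_3(X),
\]
which is the claimed identity. I do not anticipate a genuine obstacle here: the two substantive inputs—rational Poincar\'e duality (Corollary~\ref{kltI}) and the N\'eron--Severi identification $b_2=\kD$ (Section~\ref{b2})—are both already available, and the remaining work is the bookkeeping above. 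The only point warranting an explicit sentence is the reduction $b_0=b_6=1$, which follows from connectedness together with the duality itself.
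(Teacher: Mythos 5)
Your proposal is correct and follows essentially the same route as the paper: rational Poincar\'e duality for these singularities plus the identification $b_2(X)=\kD(X)$ from the exponential sequence, with the Betti-number bookkeeping made explicit. The only (cosmetic, and arguably cleaner) difference is that you invoke Corollary~\ref{kltI} directly, whose hypotheses match the statement verbatim, whereas the paper routes through Corollary~\ref{chiklt}, whose stated hypotheses (Calabi--Yau or Fano, where $b_1=0$) require the same duality argument to be rerun to account for the $-b_1(X)$ term.
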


\begin{proof} In fact  $b_2(X)=\kD(X)$,  as we observed at the beginning of  this Section. The statement then follows from Corollary \ref{chiklt}.
\end{proof}


\begin{corollary}\label{chirat} Let $X$ be a projective threefold with $h^2 (X, \cO_X)=0$ and $\Q$-factorial rational singularities which are analytically $\Q$-factorial, then 
\bea
\chi_{top}(X)&=& 2 - 4  h^{1,0} (X) +2 \kD (X)- b_3(X) \\
&=& 2 - 2 h^{0,3}(X) - 4  h^{1,0} (X)+ 2 \{\kD (X)-h^{1,2}(X)\}. 
\eea
\end{corollary}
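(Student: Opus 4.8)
The plan is to reduce Corollary \ref{chirat} to the previously established Corollary \ref{chiklt}, and then rewrite the resulting expression using Poincar\'e duality together with the Hodge-theoretic information supplied by Theorem \ref{srinivasth}. First I would observe that the hypotheses here are exactly those under which the earlier Corollaries apply: $X$ is a projective $\Q$-factorial threefold with rational singularities that are \emph{analytically} $\Q$-factorial, and $h^2(X,\cO_X)=0$. By Theorem \ref{klt} (via Theorem \ref{kltRHMT}), analytic local $\Q$-factoriality together with rational singularities makes $X$ a rational homology manifold, so by Theorem \ref{ratHomP} full rational Poincar\'e duality holds; in particular $b_2(X)=b_4(X)$ and $b_0=b_6$, $b_1=b_5$. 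This is precisely the input needed for the Euler characteristic formula
\begin{equation*}
\chi_{top}(X)=2\{1-b_1(X)+\kD(X)\}-b_3(X)
\end{equation*}
from Corollary \ref{chiGor}/\ref{chiklt}, where I use that $b_2(X)=\kD(X)$ because $h^2(X,\cO_X)=0$ forces $\ns(X)$ to have full rank $b_2$ (the exponential sequence argument at the start of Section \ref{b2}, or Theorem \ref{srinivasth}). Note the rational homology manifold hypothesis removes any need to separately assume $b_2=b_4$.

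Next I would convert the Betti numbers into Hodge numbers. The key point is that under the Srinivas hypothesis $R^1\pi_*\cO_Y=0$ (which holds here since the singularities are rational), $H^2(X,\Z)$ carries a pure Hodge structure, and more generally $X$ being a rational homology manifold means its cohomology carries pure Hodge structures satisfying Hodge symmetry $h^{p,q}=h^{q,p}$. Thus I can write $b_1(X)=h^{1,0}(X)+h^{0,1}(X)=2h^{1,0}(X)$, which immediately yields the first displayed equality
\begin{equation*}
\chi_{top}(X)=2-4h^{1,0}(X)+2\kD(X)-b_3(X).
\end{equation*}
For the second equality I would expand $b_3(X)=h^{3,0}+h^{2,1}+h^{1,2}+h^{0,3}$. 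Using Hodge symmetry this is $2h^{0,3}(X)+2h^{1,2}(X)$ (identifying $h^{3,0}=h^{0,3}$ and $h^{2,1}=h^{1,2}$). Substituting $b_3(X)=2h^{0,3}(X)+2h^{1,2}(X)$ into the first line and distributing gives
\begin{equation*}
\chi_{top}(X)=2-2h^{0,3}(X)-4h^{1,0}(X)+2\{\kD(X)-h^{1,2}(X)\},
\end{equation*}
which is the second claimed formula.

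The main obstacle I anticipate is justifying the Hodge-symmetry relations $h^{p,q}=h^{q,p}$ and the pure Hodge structure on $H^3$, since for singular varieties these are not automatic. The safe route is to lean entirely on the rational homology manifold property: once $X$ is a rational homology manifold, ordinary cohomology agrees with intersection cohomology (Theorem \ref{ratHomP}), and the intersection cohomology of a projective variety carries a pure Hodge structure with the conjugation symmetry $IH^{i,j}=\overline{IH^{j,i}}$ (the second unnumbered theorem at the start of Section \ref{ClGQFRP}). This gives both the purity of $H^2$ and $H^3$ and the symmetry $h^{p,q}=h^{q,p}$ that I need for $b_1$ and $b_3$. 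I would therefore phrase the proof so that every use of Hodge symmetry is routed through the rational homology manifold identification, rather than invoking classical Hodge theory for smooth varieties directly; the rest is elementary bookkeeping with Betti and Hodge numbers.
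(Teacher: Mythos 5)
Your proposal is correct and follows essentially the route the paper intends (Corollary \ref{chirat} is stated there without an explicit proof, immediately after the ingredients you use): rational singularities together with local analytic $\Q$-factoriality make $X$ a rational homology manifold via Theorem \ref{kltRHMT}, so Poincar\'e duality and the identification of ordinary cohomology with intersection cohomology supply both $b_2=b_4$ and the pure Hodge structures with symmetry $h^{p,q}=h^{q,p}$, while $h^2(X,\cO_X)=0$ gives $b_2(X)=\kD(X)$, after which the two displayed formulas follow by the bookkeeping you describe. The only point to tighten is your citation of Theorem \ref{klt}, which is stated for klt singularities: here one must invoke Theorem \ref{kltRHMT} directly, whose hypothesis that $\pi_1(L_p)$ be finite (or at least that $H_1(L_p,\Q)=0$) is being taken for granted, exactly as in the paper's own implicit argument.
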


 \begin{thm} \label{main} Let $X$ be a Calabi-Yau threefold with $\Q$-factorial terminal singularities. Then
$$\chi_{top}(X)=2 \{\kD(X)-\cD(X)\}+ \sum_P m(P),$$ where $m(P)$ is the Milnor number of the singular point $P$.
\end{thm}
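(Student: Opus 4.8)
The plan is to obtain this identity as a direct synthesis of two results already established: the Euler-characteristic formula coming from rational Poincar\'e duality (Corollary \ref{chiklt}) and the Milnor-number correction to the third Betti number under smoothing (Proposition \ref{b3}). No new geometric input is required; the work is entirely a matter of checking that the hypotheses of both results apply to $X$ and then combining them, with attention to the constants and factors of $2$.

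First I would verify the hypotheses. By the Calabi-Yau assumption $K_X\sim\cO_X$ and $h^2(X,\cO_X)=0$; together with the (terminal, hence canonical) singularities, Remark \ref{cdvHyp} shows that $X$ is Gorenstein, and Theorem \ref{kltRat} together with Remark \ref{cdvHyp} identifies its terminal singularities as isolated rational hypersurface singularities. Thus $X$ is simultaneously a projective $\Q$-factorial Calabi-Yau threefold with terminal singularities (the hypothesis of Corollary \ref{chiklt}) and a threefold with isolated rational hypersurface singularities satisfying $h^2(X,\cO_X)=0$ (the hypothesis of Proposition \ref{b3}); in particular the Milnor numbers $m(P)$ are well defined and a smoothing $X_t$ exists.

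Next I would apply Corollary \ref{chiklt} to write
\begin{equation*}
\chi_{top}(X)=2\{1+b_2(X)\}-b_3(X),
\end{equation*}
and then replace $b_2(X)$ by $\kD(X)$: since $h^2(X,\cO_X)=0$, the exponential sequence identifies $b_2(X)$ with the rank of the N\'eron-Severi group, i.e.\ $b_2(X)=\kD(X)$, as recorded at the opening of Section \ref{b2}. Finally I would eliminate $b_3(X)$ using Proposition \ref{b3}, which gives $\cD(X)=\tfrac12\{b_3(X)+\sum_P m(P)\}-1$, hence $b_3(X)=2\cD(X)+2-\sum_P m(P)$. Substituting,
\begin{equation*}
\chi_{top}(X)=2+2\kD(X)-\bigl(2\cD(X)+2-\textstyle\sum_P m(P)\bigr)=2\{\kD(X)-\cD(X)\}+\sum_P m(P),
\end{equation*}
which is the asserted formula.

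The nearest thing to an obstacle is purely bookkeeping rather than conceptual: one must confirm that $b_2(X)=\kD(X)$ genuinely holds under the stated vanishing $h^2(X,\cO_X)=0$, that Proposition \ref{b3} is applicable (isolated rational hypersurface singularities admitting a smoothing, with $h^2(X,\cO_X)=0$), and that the constant $-1$ and the factors of $2$ cancel exactly as above. No deeper difficulty arises, because the substantive content — rational Poincar\'e duality for $X$ and the smoothing-theoretic relation between $b_3(X)$, $\cD(X)$ and the Milnor numbers — has already been isolated in Corollary \ref{chiklt} and Proposition \ref{b3}.
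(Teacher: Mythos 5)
Your proposal is correct and follows essentially the paper's own route: the paper proves Theorem \ref{main} by combining Proposition \ref{b3} with Corollary \ref{chiGor}, and Corollary \ref{chiGor} is itself exactly the combination you spell out, namely Corollary \ref{chiklt} together with the identification $b_2(X)=\kD(X)$ from the exponential sequence under $h^2(X,\cO_X)=0$. The only difference is that you inline that intermediate corollary rather than citing it; the algebra and the hypothesis checks match the paper's argument.
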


\begin{proof} The statement follows from Proposition \ref{b3} and Corollary \ref{chiGor}.
\end{proof}

\begin{corollary}\label{mainbir}  All the terms in the equation of Theorem \ref{main} do not depend on the choice of  minimal model $X$.
\end{corollary}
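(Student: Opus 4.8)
The plan is to reduce everything to the behaviour of a few numerical invariants under a single flop. Any two projective Calabi-Yau threefolds with $\Q$-factorial terminal singularities that are birational --- i.e.\ any two minimal models in the class --- are joined by a finite sequence of flops (Kawamata, Koll\'ar), and a flop of such a threefold is again a projective Calabi-Yau threefold with $\Q$-factorial terminal singularities: $K_X$ is preserved (so the variety stays Calabi-Yau) and $h^2(X,\cO_X)=0$ persists. Hence Theorem \ref{main}, Corollary \ref{chiklt} and Proposition \ref{b3} apply verbatim to every model, and it suffices to show that each term agrees across one flop $X\dashrightarrow X'$.

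First I would dispose of the elementary invariants. A flop is an isomorphism in codimension one, so it induces an isomorphism $\cl(X)\simeq\cl(X')$ of divisor class groups and preserves the Picard number; since $h^2(X,\cO_X)=0$ forces $\kD(X)=b_2(X)=\rho(X)$ (Section \ref{b2}), we obtain $\kD(X)=\kD(X')$. Next, using that the Betti numbers of Calabi-Yau threefolds are preserved by flops (Batyrev; Wang for minimal models; alternatively via the invariance of intersection cohomology, which coincides with ordinary cohomology here by the rational-Poincar\'e results of Section \ref{ClGQFRP}), one has $b_3(X)=b_3(X')$. Corollary \ref{chiklt} then gives $\chi_{top}(X)=2\{1+b_2(X)\}-b_3(X)$, which is therefore a birational invariant as well.

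It remains to treat $\cD$ and $\sum_P m(P)$, which I would control through the smoothing. By Proposition \ref{b3} (resting on Namikawa--Steenbrink), $X$ admits a smoothing to a smooth Calabi-Yau threefold $X_t$ with $b_3(X_t)=b_3(X)+\sum_P m(P)$ and $\cD(X)=\cD(X_t)=\tfrac{1}{2} b_3(X_t)-1=h^{2,1}(X_t)$. The crux is to show that $b_3(X_t)$ --- equivalently $\cD$ --- depends only on the birational class; granting this, $\sum_P m(P)=b_3(X_t)-b_3(X)$ is invariant as a difference of invariants, and the proof is complete.

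The main obstacle is exactly this last invariance. One cannot argue termwise, because a flop genuinely changes the singular set $\{P\}$ and the individual Milnor numbers; only the total $\sum_P m(P)$ can be expected to be invariant. I expect the right approach is to show that the smoothings of two flop-equivalent models are deformation equivalent --- so that $X_t$ and $X'_{t}$ lie in a single connected family, and are in particular diffeomorphic, forcing $b_3(X_t)=b_3(X'_{t})$ --- either by propagating the flop through a simultaneous deformation of the total space, or by appealing to Namikawa's deformation theory of Calabi-Yau threefolds with terminal singularities. Once $b_3(X_t)$ is identified as a birational invariant, the remaining bookkeeping via Corollary \ref{chiklt} and Proposition \ref{b3} is routine.
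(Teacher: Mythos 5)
Your handling of the elementary terms ($\kD$, $b_3$, and hence $\chi_{top}$ via Corollary \ref{chiklt}) matches the paper's argument, but the proposal has a genuine gap exactly where you locate "the main obstacle": the invariance of $\cD$ and of $\sum_P m(P)$ is never established. You reduce it to showing that the smoothings $X_t$ and $X'_t$ of two flop-related models have the same $b_3$, and then only express the expectation that this follows from a deformation-equivalence argument ("I expect the right approach is\dots"). As written, the proof is incomplete at its crucial step, and the reduction itself routes through the hardest possible version of the problem.

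Moreover, the heuristic you use to justify this detour is false, and recognizing that it is false is precisely what makes the paper's proof short. You assert that "a flop genuinely changes the singular set $\{P\}$ and the individual Milnor numbers; only the total $\sum_P m(P)$ can be expected to be invariant." In fact, Koll\'ar's theorem \cite[Theorem 2.4]{KollarFlops} states that birationally equivalent projective threefolds with ($\Q$-factorial) terminal singularities and nef canonical class have the \emph{same analytic singularities}: the multisets of singular germs agree, so the Milnor numbers agree point by point, and $\sum_P m(P)$ is invariant termwise. This is the paper's argument for that term. For $\cD$, the paper does not pass through the smoothing at all: it invokes \cite[Theorem 12.6.2]{KollarMoriFlips}, which gives directly that flop-related threefolds have miniversal deformation spaces of the same dimension, so $\cD(X)=\cD(X')$. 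With these two inputs your remaining bookkeeping is indeed routine; without them (or a completed proof of the deformation equivalence of the smoothings, which itself essentially requires the Koll\'ar--Mori machinery you gesture at), the proposal does not prove the corollary.
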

\begin{proof} In fact, let $X$  and $X'$  be birationally equivalent minimal  threefolds with $\Q$-factorial terminal singularities. Then $b_j(X)=b_j(X')$ $\forall j \ $ \cite[Theorem 3.2.2]{KollarFlipsFlopsMMetc}.   Since birationally equivalent minimal models are related by flop transitions, 
$ \kD(X)=\kD(X')$. Also, $X$ and $X'$ have the same analytic type of singularities  \cite[Theorem 2.4]{KollarFlops}; hence they also have the same Milnor numbers. Furthermore the dimensions of the miniversal deformation spaces are the same \cite[Theorem 12.6.2]{KollarMoriFlips}. 
\end{proof}

\section{Gauge algebras and representations (a Brieskorn-Grothendieck Program)}\label{BG}
 It was noted by Du Val and Coxeter \cite{DV, coxeter:annals}
that rational double points
are classified by the Dynkin diagrams of the
simply laced Lie algebras of type $\mathfrak a_n, \mathfrak d_n, \mathfrak e_6, \mathfrak e_7, \mathfrak e_8$.
 In fact, if we resolve the singularity by
blowing up,
the dual diagram of the exceptional divisors
is one of the above Dynkin diagrams.  Further connections between the Lie algebras and surface singularities were discovered in  works by Brieskorn, Grothendieck,  Tyurina,  and Slodowy. A mathematical explanation  of the parallelisms of the classification remains elusive.
String theory provides a framework in which a Lie algebra  $\mathfrak g$, the \emph{``gauge algebra"},  is naturally associated to an elliptic fibration between Calabi-Yau manifolds.
All the Dynkin diagrams, including the non-simply laced ones, occur. 
  Deep relations between $\mathfrak g$, its representations and the geometry of the fibration have emerged, however  the correspondence is often case by case and  the assumption of smoothness imposes restrictions.
 In this section we review and state the expected correspondence in mathematical terms,  and extend it to  singular varieties, in particular to $\Q$-factorial terminal singularities.

In Sections \ref{nagasT}  and \ref{nagasM} we construct the algebras, in Sections \ref{unloc}   and  \ref{sec_localized}  we define a map between the codimension one and two strata of the discriminant locus of  the fibration and the representations of the algebra.
Theorem \ref{Anomalycondition1} in the following Section  \ref{chitop}  provides evidence for a ``Brieskorn-Grothendieck Program", associating Lie algebras and their representations to singularities of varieties.

Although Calabi-Yau threefolds with terminal singularities  and elliptic fibrations are the focus of the applications in Section \ref{chitop}, in this Section we state definitions and results in more generality.

\begin{definition} A genus one  fibration is a morphism $\pi : X\rightarrow B$ whose fibers over a dense set are genus one
curves. The complement of this dense locus is the discriminant of the fibration and it is denoted by $\Sigma $. 
$X$ is relatively minimal if $K_X \cdot \Gamma \geq 0$, for all the curves $\Gamma$ contracted by $\pi$ (or equivalently $K_X$ is $\pi$-nef).

 If  a genus one fibration  $\pi$ has a section $\sigma: B \to X$, it is called an elliptic fibration. 
\end{definition}

The support of the discriminant locus of a $X \to B$ a genus one fibration has  a stratified structure given by its singularities; we analyze this in Section \ref{subsection_reps}.
If  $X$  and $B$ are smooth and $\pi$ is an elliptic fibration, $X$ is the resolution of  the Weierstrass model $W$ of the
fibration,
 which has Gorenstein singularities \cite{Na88}:
\begin{definition} A Weierstrass model $W$ is defined by
$ y^2z-(x^3+\alpha xz^2+\beta z^3) = 0$  
where
$\alpha,\beta$ are  sections of ${\mathcal L}^{\otimes 4}$ and ${\mathcal L}^{\otimes 6}$ with ${\mathcal L}$ a line bundle on $B$.
\end{definition}
If $\dim W=2$,  the singularities of $W$ are the rational double points.  
If ${\mathcal L} = {\mathcal O}(-K_B)$, $K_W \simeq \mathcal O_W$. We are mostly interested in Weierstrass models $W$ which are birationally Calabi-Yau varieties.
By a rescaling of the Weierstrass equation, possibly together with a suitable blowup of the base,  we assume that  $\alpha$ and $\beta$  nowhere vanish simultaneously  with multiplicity equal to or higher than $4$ and $6$,
 that is there there are no ``non-minimal" points.  
The assumption is  necessary for the existence of an  equidimensional birationally equivalent elliptic fibration  $X \to B$; $X$ is a relative  minimal model   of $W \to B$. The condition is  also sufficient when $\dim W=3$ \cite{GrassiEqui}.

 Assuming $X$ to be smooth imposes restrictions.  However, without loss of generality we can still assume that $B$ is smooth if $\dim (X)=3$ \cite{Na88, Grassi1991}. 

\subsection{ (Gauge) algebras 
 and  the codimension one strata of the discriminant}\label{subsection_algebras} 
 
 \subsubsection{The abelian components of the gauge algebra}\label{agas}
If the Mordell-Weil group of the elliptic fibration has rank $r>0$, the gauge algebra includes an abelian part $\frak{u}(1)^{\oplus r}$. We briefly discuss the Mordell-Weil group and the abelian part of the gauge algebra after Theorem \ref{Anomalycondition1}; in the present work we focus on the  non-abelian part of the gauge algebra.

Next we present two methods to describe the non-abelian gauge algebras associated to the fibration, the first one uses the existence of a section. One novelty in our analysis is also the presence of singularities on $X$.

\subsubsection{The non-abelian components of the gauge algebra,   through the ``Tate algorithm"}\label{nagasT}~\vskip 0.1in

 The proofs of the following Lemmas are along the general arguments of \cite{GrassiMorrison11}, but  there they are not always stated  explicitly.  

\begin{proposition}\label{TateAlg}Let  $B$ be smooth. To  a Weierstrass model $W \to B$ 
there is a naturally associated 
Lie algebra $\mathfrak g  = \bigoplus_{\Sigma_j} \mathfrak{g}(\Sigma_j)$, 
where the sum is taken over the irreducible components of the discriminant locus. For any irreducible  component $\Sigma_j$,  the Kodaira fiber over the general point of $\Sigma_j$ and the possible
   $\mathfrak{g}(\Sigma_j)$ 
are listed in  the second and third column of Table \ref{tab:A}.
\end{proposition}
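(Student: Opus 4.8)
The plan is to reduce the classification to a codimension-one, one-parameter problem at the generic point of each component $\Sigma_j$, apply the Tate algorithm there, and then extract the Lie algebra from the resolved fibre together with the monodromy of the family around $\Sigma_j$. Concretely, I would first write the discriminant as $\Sigma = \{\Delta = 0\}$ with $\Delta = 4\alpha^3 + 27\beta^2$ and decompose it into irreducible components $\Sigma_j$. Since $B$ is smooth, the local ring $\mathcal{O}_{B,\eta_j}$ at the generic point $\eta_j$ of the prime divisor $\Sigma_j$ is a discrete valuation ring with uniformizer $s$ and valuation $v_j$. Restricting the Weierstrass model to $\Spec \mathcal{O}_{B,\eta_j}$, equivalently to a general analytic disc transverse to $\Sigma_j$ and meeting neither the other components of $\Sigma$ nor the singular locus of $\Sigma$, produces a one-parameter Weierstrass family. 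Because threefold terminal singularities are isolated and the enhancements of the singular locus of a relative minimal model $X$ occur over loci of codimension at least two in $B$, the generic point $\eta_j$ lies away from them, so this transverse family is a relatively minimal elliptic surface in the sense of Kodaira.

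Next I would run the Tate algorithm on this one-parameter family. The triple $(v_j(\alpha), v_j(\beta), v_j(\Delta))$ determines, via the Kodaira--N\'eron--Tate classification, the Kodaira type of the fibre over $\eta_j$; this is precisely the content of the second column of Table~\ref{tab:A}. This step is standard once the reduction above is in place, and the assumption that $(\alpha,\beta)$ vanish nowhere to order $(4,6)$ guarantees we remain in the relatively minimal range, so the algorithm terminates in one of the listed Kodaira types.

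The substantive step is to read off $\mathfrak{g}(\Sigma_j)$. The minimal resolution of the transverse Weierstrass surface resolves a rational double point along the central fibre, and the dual graph of its exceptional curves is an affine Dynkin diagram; deleting the component meeting the zero-section yields a finite Dynkin diagram of type $\T,\D,\E$. The monodromy of the family as one loops around $\Sigma_j$ permutes the fibre components and thus acts by a diagram automorphism, and $\mathfrak{g}(\Sigma_j)$ is the (possibly non-simply-laced) subalgebra fixed by the corresponding folding. This folding is detected by the splitting type, over the function field $k(\Sigma_j)$, of the auxiliary monodromy polynomials built from the leading coefficients of $\alpha$, $\beta$ and $\Delta$ along $\Sigma_j$; this refined Tate analysis distinguishes, for instance, $\so(2k)$ from $\so(2k+1)$ and $\su$ from $\sp$, and produces the third column. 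Summing over all components then yields $\mathfrak{g} = \bigoplus_{\Sigma_j}\mathfrak{g}(\Sigma_j)$, following the arguments of \cite{GrassiMorrison11}.

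The main obstacle I anticipate is exactly this monodromy-folding step. Since the residue field $k(\Sigma_j)$ at $\eta_j$ is in general not algebraically closed, one must track carefully which residue-field extensions occur in order to decide which outer automorphisms of the ADE diagram are actually realized, and hence which non-simply-laced algebra arises. Making this book-keeping consistent with the split, semi-split and non-split cases of the table, and checking that the outcome is independent of the choice of transverse disc and of uniformizer, is the delicate part; by contrast the Tate-algorithm computation and the passage to the generic point are routine.
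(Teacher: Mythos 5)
Your proposal is correct and takes essentially the same route as the paper: both reduce to the generic point of each irreducible component $\Sigma_j$ (smoothness of $B$ making $\Sigma_j$ locally principal, hence a one-parameter Tate-algorithm problem), identify the Kodaira type there, and obtain $\mathfrak{g}(\Sigma_j)$ from the ADE diagram of the resolved Weierstrass rational double point folded by the monodromy, detected via the split/semi-split/non-split refinement of Tate's algorithm as in Appendix B of \cite{GrassiMorrison11}. The only cosmetic difference is your appeal to isolated threefold terminal singularities, which is unnecessary: the argument works in all dimensions precisely because everything is computed at the generic point of $\Sigma_j$, which the paper emphasizes by noting the correspondence never uses a smooth relatively minimal model of $W \to B$.
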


 \begin{proof}  The singular fibers for smooth (relatively) minimal elliptic surfaces were classified by Kodaira \cite{MR0184257}, and the associated algebra is the one associated to the rational double point of the singular Weierstrass model. For Weierstrass threefolds which are equisingular along all components, this association gives   the simply-laced algebras  of Table \ref{tab:A}. By a careful analysis of Tate's algorithm  \cite{MR0393039}  we extend the association  to  all dimensions.  The correspondence does not use the existence of a smooth (relatively) mimimal model of  the Weierstrass model $W \to B$.
The Kodaira classification and Tate's algorithm only depend on
the generic structure  of the elliptic fibration along  each irreducible component.
 The  Kodaira-Tate algorithm  as elaborated in   Appendix B of \cite{GrassiMorrison11} can still be applied, since $B$ is smooth and each irreducible component of the discriminant $\Sigma$ is a Cartier divisor, which is locally principal.
The modified algorithm  starts by constructing resolutions of the general singularities of the Weierstrass model $W$, which are the singularities over the general points of the discriminant locus.
 Then the analysis along the irreducible components  
$\Sigma_j$ of
the discriminant locus determines the algebra, together with a possible associated ``monodromy" which leads to the non-simply laced algebras.   \end{proof}

The modified Tate algorithm  also describes the structure of the partially resolved fibration
near each component:

\begin{lemma}\label{def:g'} Let  $B$ be smooth, $W \to B$ be a Weierstrass model and $W_{\Sigma_j} \to B$  the partial general resolution in the Proof of Proposition \ref{TateAlg}. Let $D^l_j$ be an irreducible Weil
divisor which maps surjectively onto $\Sigma_j$.   Then  the elliptic fibration induces on $D^l_j$  the structure of a  surface {generically} ruled either over $\Sigma_j$ or over $\Sigma_j'$,  a finite branched cover of $\Sigma_j$.
 Let $\ell_{j,l}$ be the general fiber of the ruling. In the non-simply laced case, the cover is of degree $2$ unless ${\mathfrak g}(\Sigma_j)={\mathfrak g}_2$;
in this case, the degree of the cover is $3$.
\end{lemma}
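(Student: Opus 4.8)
The plan is to study the partial general resolution $W_{\Sigma_j} \to B$ of Proposition \ref{TateAlg} over the generic point $\eta_j$ of $\Sigma_j$ and then spread the resulting fiber structure out over all of $\Sigma_j$. By the Kodaira--Tate classification underlying Proposition \ref{TateAlg}, the resolved fiber over $\eta_j$ is a tree of rational curves whose dual graph is the affine Dynkin diagram of the relevant Kodaira type, the non-identity components being indexed by the simple roots. First I would observe that each exceptional divisor $D^l_j$ is, by construction, swept out by a family of such rational curves: its restriction to a general fiber of $W_{\Sigma_j} \to \Sigma_j$ is a (possibly reducible) union of copies of $\mathbb{P}^1$. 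Hence $D^l_j$ admits a generically defined map to $\Sigma_j$ whose general fiber is a disjoint union of $\mathbb{P}^1$'s, so $D^l_j$ is a generically ruled surface; the equidimensionality of the resolution over $\eta_j$ and the fact that these rational curves move in a flat family give the genuine ruling rather than merely a dominant rational map.

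Next I would take the Stein factorization $D^l_j \to \Sigma_j' \to \Sigma_j$ of this map. Since $D^l_j$ is irreducible whereas its fiber over a general point of $\Sigma_j$ may consist of several rational curves, these curves must be permuted transitively by the monodromy of $\pi_1(\Sigma_j \setminus \{\text{branch locus}\})$; the Stein factorization packages precisely this, producing a finite cover $\Sigma_j' \to \Sigma_j$ over which $D^l_j \to \Sigma_j'$ has connected, generically irreducible rational fibers. This yields the dichotomy in the statement: $D^l_j$ is ruled over $\Sigma_j$ itself when the corresponding components are monodromy-invariant, and over the branched cover $\Sigma_j'$ when they are permuted. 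The ramification of $\Sigma_j' \to \Sigma_j$ is supported over the special points of $\Sigma_j$ where the generic fiber type degenerates and the monodromy action trivializes.

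To pin down the degree I would identify $\Sigma_j' \to \Sigma_j$ with the monodromy cover recording the action on the set of generic fiber components, which is exactly the splitting-field cover read off by Tate's algorithm in Proposition \ref{TateAlg}. The non-simply-laced algebras $\mathfrak{g}(\Sigma_j)$ arise precisely by folding a simply-laced Dynkin diagram along an outer automorphism, and the degree of the relevant cover equals the size of the corresponding monodromy orbit. Reading the foldings off Table \ref{tab:A}, the algebras $\mathfrak{c}_n$, $\mathfrak{b}_n$ and $\mathfrak{f}_4$ come from $A_{2n-1}$, $D_{n+1}$ and $E_6$ via an order-two identification, so the nontrivial orbits have size $2$ and the cover $\Sigma_j'$ has degree $2$; whereas $\mathfrak{g}_2$ arises from $D_4$ by the order-three triality, giving orbits of size $3$ and a degree-$3$ cover.

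The main obstacle I anticipate is the case-by-case bookkeeping needed to guarantee that the monodromy orbits have exactly the claimed sizes, and in particular that no orbit larger than the folding order arises. This rests on the careful reading of Tate's algorithm along each irreducible component already carried out in Proposition \ref{TateAlg}, combined with the standard classification fact that the only outer automorphisms producing non-simply-laced diagrams are the order-two foldings of type $A$, $D$, $E$ and the order-three triality of $D_4$. A secondary technical point is to check that the generically ruled structure and the Stein factorization behave compatibly with the partiality of the resolution, which holds because $W_{\Sigma_j} \to B$ resolves only the singularities generic along $\Sigma_j$ and is therefore well-behaved over the generic point of each component.
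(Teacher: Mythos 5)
Your proposal is correct and is essentially the paper's own argument: the paper gives no separate proof of this lemma, deriving it directly from the modified Kodaira--Tate algorithm in the proof of Proposition \ref{TateAlg} (the exceptional divisors are swept out by the rational components of the generic Kodaira fiber, and the monodromy permuting those components produces the cover $\Sigma_j'$, of degree equal to the folding order: $2$ for the order-two foldings, $3$ for the triality fold giving $\mathfrak{g}_2$), which is exactly what you make precise via the Stein factorization. One minor imprecision worth fixing: for the odd case $I_{2k+1}$ the algebra $\mathfrak{sp}(k)$ arises from folding $A_{2k}$ (not $A_{2k-1}$, which is the even case $I_{2k}$), but since that folding is also of order two, your conclusion about the degree of the cover is unaffected.
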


Note that $D^l_j$ is not always normal and also that $\ell_{j,l}$ is not always reduced or irreducible.
\subsubsection{The non-abelian components of the gauge algebra,   through the intersection matrix}\label{nagasM}~\vskip 0.1in

From now on we consider genus one fibrations of threefolds, although most of what we write can be generalized to higher dimensions and for more general fibrations:

\begin{definition}\label{Lambda0}  Let $Y$ be a threefold and assume  that $ H_2(Y, \Q) $ and  $H_4(Y, \Q) $ are Poincar\'e dual. Let
 $<,>: \ H_2(Y, \Q)  \times H_4(Y, \Q) \to \Q$ be the induced non-degenerate pairing.  
For $E \in H_2(Y, \Q)$, let ${[E]^\perp= \{ D \in H_4(Y, \Q) \text{ s. t. } <E,  D>=0\}}$.
For $B_1 \in H_4(Y, \Q)$, let ${[{B_1}]^\perp= \{ C  \in H_2(Y, \Q) \text{ s. t. } <C,  B_1>=0\}}$.
\end{definition}

We also denote by $[E]^ \perp$ its dual in $H^2(Y, \Q)$. 
When $Y$ is $\Q$-factorial the pairing is the intersection pairing between curves and $\Q$-Cartier divisors. 
\begin{definitionp}\label{Lambda}

Let $\pi: Y \to B$ be a  fibration with general  fiber $E$ and  assume that  $H_2(Y, \Q) $ and  $H_4(Y, \Q) $ are Poincar\'e dual. 
Assume also  that $B$ and $Y$  are $1$-rational.  \\
  Then $\pi^* ( {\ns (B)}) \subseteq H^{1,1}(Y, \Q) \cap H ^2(Y, \Z) \cap [E]^\perp $.

Set  $  \overline \Lambda  \stackrel{def}= H^{1,1}(Y, \Q) \cap H^2(Y, \Z) \cap [E]^\perp \ $ and   $\ \ \Lambda \stackrel{def}=    \overline \Lambda / \pi^*(\ns(B)) $.  
\end{definitionp}
\begin{proof} Note that  $\pi^* ( {\ns (B)}) \subseteq \ns(Y) \cap [E]^\perp$.  Since $B$ and $Y$ are $1$-rational (Section \ref{basics}) $\ns(Y) \subseteq  H^{1,1}(Y, \C) \cap H^2(Y, \Z) $, by Theorem \ref{srinivasth}. \end{proof}

 If  $\pi: Y \to B$ is a  
 factorial  relatively minimal threefold (with terminal singularities)  of a Weierstrass fibration $ W \to B$, with $B$ smooth as in Section \ref{nagasT},
  $\Lambda$ is generated by the exceptional divisors $D^l_j$ described in Lemma \ref{def:g'}, because the fibration is  equidimensional \cite{GrassiEqui}. The identification depends on the choice of the Weierstrass model or equivalently on the choice of the section of the fibration $\pi$. 
 
\begin{definition}\label{L1}
Let  $\pi: Y \to B$ be a fibration,  with general fiber   $E$ and $Y$ $\Q$-factorial.   Let $H_2^{(\pi)}(Y, \Q)$ be the span in  $H_2(Y, \Q)$ of $ \NE(Y/B)$, the classes of effective
 curves contracted by $\pi$,
 that is the set  of $\ell$ such that $\pi_*(\ell)=0$.   Let $L_1$  be $H_2^{(\pi)}(Y, \Q)$ modulo the  numerical equivalence class of $E$.   \end{definition}
 
\begin{proposition} Let  $Y$, $B$  and $\pi: Y \to B$ as in Definitions \ref{Lambda} and \ref{L1}. Then
the  Poincar\'e  pairing  induces an integral pairing when restricted to the classes of algebraic curves in $L_1$ and $\Lambda$.

\end{proposition}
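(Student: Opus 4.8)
The plan is to reduce the Poincar\'e pairing of Definition~\ref{Lambda0} to the integral evaluation pairing $H^2(Y,\Z)\times H_2(Y,\Z)\to\Z$ and then to verify that this pairing descends to the two quotients defining $\Lambda$ and $L_1$.

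First I would fix integral representatives. An element of $\Lambda$ lifts to a class $D\in\overline{\Lambda}=H^{1,1}(Y,\Q)\cap H^2(Y,\Z)\cap[E]^\perp$, which is in particular an \emph{integral} cohomology class in $H^2(Y,\Z)$. A class in $L_1$ arising from an algebraic curve lifts to an honest curve $\ell\subset Y$, whose fundamental class $[\ell]$ lies in $H_2(Y,\Z)$. Since $Y$ is $\Q$-factorial, the Poincar\'e pairing of Definition~\ref{Lambda0} is the intersection number $D\cdot\ell$ of the $\Q$-Cartier divisor $D$ with $\ell$. Choosing a Cartier multiple $mD$, one has $D\cdot\ell=\tfrac1m\,(mD)\cdot\ell=\tfrac1m\,\langle c_1(\cO_Y(mD)),[\ell]\rangle$; since $c_1(\cO_Y(mD))=m\,[D]$ with $[D]\in H^2(Y,\Z)$, this equals $\langle[D],[\ell]\rangle$, the evaluation of an integral cohomology class against an integral homology class, hence an integer.

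Next I would check that the pairing is well defined on the two quotients. On the $L_1$ side, replacing $\ell$ by a representative differing by a multiple of the general fiber $E$ changes $D\cdot\ell$ by a multiple of $D\cdot E$, which vanishes because $\overline{\Lambda}\subset[E]^\perp$ by Definition~\ref{Lambda}. On the $\Lambda$ side, replacing $D$ by $D+\pi^*D_B$ with $D_B\in\ns(B)$ changes the pairing by $\pi^*D_B\cdot\ell$; the projection formula together with $\pi_*\ell=0$ (valid since $\ell\in\NE(Y/B)$ is $\pi$-contracted, Definition~\ref{L1}) gives $\pi^*D_B\cdot\ell=D_B\cdot\pi_*\ell=0$. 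Thus the pairing descends to a well-defined map $L_1\times\Lambda\to\Q$ with values in $\Z$.

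The only genuine subtlety, and the step I expect to require the most care, is the integrality asserted in the second paragraph. Because $Y$ is singular, Poincar\'e duality holds only rationally, so the pairing of Definition~\ref{Lambda0} is a priori merely $\Q$-valued; what saves the situation is that the definition of $\overline{\Lambda}$ builds in the lattice condition $\cap\,H^2(Y,\Z)$, forcing each divisor class to be integral \emph{in cohomology} even though the underlying Weil divisor need only be $\Q$-Cartier. It is exactly this integrality, paired against the integral curve class $[\ell]$, that pushes the intersection number into $\Z$; dropping the lattice condition in $\overline{\Lambda}$ would leave only rational values.
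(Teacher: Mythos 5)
The paper states this proposition without proof, so there is no argument of the authors' to measure yours against; judged on its own, your proposal is correct in substance and is almost certainly the intended argument. Your well-definedness checks are exactly right: $\overline{\Lambda}\subset[E]^\perp$ kills the ambiguity in $L_1$ coming from the fiber class, and the push-pull formula $\langle\pi^*D_B,\ell\rangle=\langle D_B,\pi_*\ell\rangle=0$, using $\pi_*\ell=0$ from Definition~\ref{L1}, kills the ambiguity coming from $\pi^*(\ns(B))$. The integrality itself is, as you say in your closing paragraph, simply the statement that the pairing of Definition~\ref{Lambda0}, being induced from the evaluation (Kronecker) pairing via rational Poincar\'e duality, pairs an integral class of $H^2(Y,\Z)$ against an integral class of $H_2(Y,\Z)$, hence lands in $\Z$; the lattice condition $\cap\, H^2(Y,\Z)$ in the definition of $\overline{\Lambda}$ is precisely what makes this work.

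The one wrinkle is your second paragraph. There you treat the class $D\in\overline{\Lambda}$ as a $\Q$-Cartier divisor and route the intersection number through a Cartier multiple $mD$ and $c_1(\cO_Y(mD))$. But $\Q$-factoriality says that Weil divisors are $\Q$-Cartier; it does not say that an integral $(1,1)$ cohomology class is the class of an algebraic divisor in the first place. That identification is exactly what the $1$-rationality hypothesis in Definition~\ref{Lambda} and Srinivas' Theorem~\ref{srinivasth} are there for: under $R^1\pi_*\cO_Y=0$, the integral $(1,1)$-classes coincide with $c_1(\Pic(Y))$, so elements of $\overline{\Lambda}$ are in fact Cartier classes and no $\Q$-Cartier multiple is needed at all. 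Fortunately this detour is dispensable: the purely topological argument of your last paragraph already proves integrality without any appeal to algebraicity of $D$, so the unjustified step in the second paragraph does not propagate into the conclusion.
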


$L_1$ and $\Lambda$ can be defined also for genus one  fibrations. When there is a section we have:

\begin{definition}\label{L2}  Let  $\pi: Y \to B$ be an elliptic fibration  with  section  $B_1 \simeq B$. Assume that $ H_2(Y, \Q) $ and  $H_4(Y, \Q) $ are Poincar\'e dual. Let $[B_1]^\perp$ be the orthogonal complement within $H_2(Y, \Q)$. Let $H_2^{(\pi)}(Y, \Q)$ be the span in  $H_2(Y, \Q)$ of  the effective curves contracted by $\pi$, $\bar L \subset H_2(Y,\Q)$ the subspace spanned by  the $\ell_{j,l}$ in $H_2^{(\pi)}(Y, \Q)$ and  ${L_2=H_2(Y,\Z) \cap \bar L \cap [B_1]^\perp}$.

\end{definition}

   If  $\pi: Y \to B$ is a  $\Q$-factorial  relatively minimal model   (with terminal singularities) of a Weierstrass fibration $ W \to B$,  as in Section \ref{nagasT},  $L_1 \simeq L_2$ and the isomorphism between the two definitions depends on the choice of a section; in this case we write $L \stackrel{def}= L_1 \simeq L_2$.
   
\begin{corollary} Let $X \to B$ be a genus one threefold with  Gorenstein $\Q$-factorial terminal singularities and $h^2(X,\cO_X)=0$. Then   $L_1$ and $\Lambda$ are well-defined and the  pairing  is   integral when restricted to $L_1$ and to $\Lambda$.
\end{corollary}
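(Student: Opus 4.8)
The plan is to reduce the statement to the abstract results already established, namely Definition-Proposition \ref{Lambda}, Definition \ref{L1}, and the (unlabelled) Proposition immediately preceding Definition \ref{L2}, by checking that a genus one threefold $X \to B$ with Gorenstein $\Q$-factorial terminal singularities and $h^2(X,\cO_X)=0$ satisfies all of their hypotheses. Concretely, three things must be verified: that $X$ is $\Q$-factorial (so that $L_1$ and the cone $\NE(X/B)$ of Definition \ref{L1} are defined), that $H_2(X,\Q)$ and $H_4(X,\Q)$ are Poincar\'e dual (so that the pairing of Definition \ref{Lambda0} and the orthogonal complement $[E]^\perp$ of Definition \ref{Lambda} are defined), and that both $X$ and $B$ are $1$-rational (so that $\overline{\Lambda}$ and $\Lambda$ are defined and, via Theorem \ref{srinivasth}, consist of genuine integral classes).

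First I would dispose of $L_1$: by hypothesis $X$ is $\Q$-factorial, so Definition \ref{L1} applies verbatim and $L_1$ is well-defined. Next I would establish $1$-rationality. By Theorem \ref{kltRat} terminal singularities are rational, hence $R^1\rho_* \cO_Y = 0$ for any resolution $\rho\colon Y \to X$, so $X$ is $1$-rational; for a threefold fibration we may take $B$ smooth (as recalled in this section), whence $B$ is $1$-rational as well. Finally, since $X$ is projective, Gorenstein, $\Q$-factorial with terminal singularities and $h^2(X,\cO_X)=0$, Theorem \ref{rPD2} shows that $X$ satisfies rational Poincar\'e duality; in particular $H_2(X,\Q)$ and $H_4(X,\Q)$ are Poincar\'e dual and the pairing is non-degenerate. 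With these three facts in hand, Definition-Proposition \ref{Lambda} applies, so $\overline{\Lambda}$ and $\Lambda$ are well-defined, and the Proposition preceding Definition \ref{L2} yields the integrality of the pairing on $L_1$ and $\Lambda$.

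For the integrality I would spell out why it holds, since it is the only point that is not purely formal. By Theorem \ref{srinivasth} (applicable because $R^1\pi_*\cO=0$), the integral $(1,1)$-classes $H^{1,1}(X,\Q)\cap H^2(X,\Z)$ entering $\overline{\Lambda}$ coincide with $\ns(X)=c_1(\Pic(X))$, i.e.\ with classes of honest line bundles rather than merely $\Q$-Cartier Weil divisors. Hence pairing a class $D\in\overline{\Lambda}$ with an algebraic curve $\ell\in H_2^{(\pi)}(X,\Q)$ is the degree $\deg(D|_\ell)\in\Z$. This descends to $\Lambda\times L_1$: for a contracted curve $\ell$ one has $\pi^*(\ns(B))\cdot\ell=\ns(B)\cdot\pi_*\ell=0$ by the projection formula, and $D\cdot E=0$ for $D\in[E]^\perp$, so both quotients are compatible with the pairing, which therefore remains $\Z$-valued.

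The main obstacle is precisely this integrality, and the point to emphasise is that it does \emph{not} follow from $\Q$-factoriality alone: $\Q$-factoriality only guarantees that Weil divisors are $\Q$-Cartier, so a priori the intersection pairing with curves is merely $\Q$-valued. The essential input upgrading it to $\Z$-values is Theorem \ref{srinivasth}, which identifies the relevant integral $(1,1)$-classes with $c_1(\Pic(X))$, and whose hypothesis $R^1\pi_*\cO=0$ is guaranteed here only because terminal singularities are rational (Theorem \ref{kltRat}). The remaining care is bookkeeping: confirming that $X$ is projective so that Theorem \ref{rPD2} applies, and that $B$ may be taken smooth so that the $1$-rationality hypothesis of Definition-Proposition \ref{Lambda} is met.
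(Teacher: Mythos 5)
Your proof is correct, and its skeleton matches the paper's where it concerns well-definedness: both arguments rest on Theorem \ref{rPD2} to obtain the duality between $H_2(X,\Q)$ and $H_4(X,\Q)$, together with the rationality of terminal singularities (Theorem \ref{kltRat}) and smoothness of $B$ to meet the $1$-rationality hypotheses of Definition-Proposition \ref{Lambda}. Where you genuinely diverge is the integrality step. The paper's proof is one line: by the Reid--Ue/Kawamata proposition recalled in Section \ref{tckf}, a Gorenstein (index one) threefold with $\Q$-factorial terminal singularities is in fact \emph{factorial}, so the Weil divisors $D^l_j$ that generate $\Lambda$ are honest Cartier divisors and their intersection numbers with algebraic curves are integers. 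You instead invoke the full strength of Srinivas' Theorem \ref{srinivasth}: every integral $(1,1)$-class in $\overline\Lambda$ is $c_1$ of a line bundle, hence pairs integrally with any algebraic curve. Each route buys something. The paper's argument is shorter and pinpoints exactly where the Gorenstein hypothesis enters the integrality claim (index one is precisely what upgrades $\Q$-factorial to factorial); note, however, that it still tacitly relies on the N\'eron--Severi identification to regard classes in $\Lambda$ as divisor classes at all, so it really concerns the classes spanned by the exceptional divisors. Your argument proves integrality for \emph{every} integral class of $\overline\Lambda$, not only those represented by the $D^l_j$, and it makes explicit the descent of the pairing to the quotients $\Lambda = \overline\Lambda/\pi^*(\ns(B))$ and $L_1$ (projection formula on one side, membership in $[E]^\perp$ on the other), a verification the paper leaves implicit. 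In that sense your version can be read as an expanded and slightly more general form of the compressed proof in the paper.
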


 $L_1$ and in particular $\Lambda$ in the statement are well defined if $X$ has isolated
 klt singularities and $b_2(X)=b_4(X)$, by  Theorem \ref{rPD1}.
 \begin{proof} Theorem \ref{rPD2}  implies that  $ H_2(X, \Q) $ and  $H_4(X, \Q) $ are Poincar\'e dual.  We noted in Section \ref{tckf}  that a Gorenstein threefold with $\Q$-factorial terminal singularities is actually  factorial.   \end{proof}

In  particular a   Calabi-Yau threefold with $\Q$-factorial terminal singularities satisfies the hypothesis of the Corollary. Note also that if $h^2(W,\cO_W)=0$  then $\ns(Y) \subseteq  H^2(Y, \Z)  $.

\begin{proposition} \label{Cartan-proposition} Let  $B$ be smooth, $W \to B$ be a Weierstrass model and $X \to B$  the birationally equivalent minimal model  with $\Q$-factorial terminal  Gorenstein singularities. Equivalently, let $X \to B$ be an elliptic  threefold with  Gorenstein $\Q$-factorial terminal singularities and $W\to B$ the associated minimal model. The pairing restricted to  $L$ and to $\Lambda$ gives the negative of the Cartan matrices of the algebras $\mathfrak g (\Sigma_j)$  as in Proposition \ref{TateAlg}.
\end{proposition}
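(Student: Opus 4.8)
The plan is to reduce the statement to the computation of the intersection numbers $\langle \ell_{j,l}, D^m_k\rangle$ on the generators of $L$ and $\Lambda$, and then to show that the resulting matrix is block-diagonal over the irreducible components $\Sigma_j$ of the discriminant, with each block equal to the negative Cartan matrix of $\mathfrak g(\Sigma_j)$. By the preceding Corollary the Poincar\'e pairing restricted to $L$ and $\Lambda$ is well-defined and integral: this uses that $X$ is Gorenstein $\Q$-factorial terminal, hence factorial (Section \ref{tckf}), so that the Weil divisors $D^l_j$ of Lemma \ref{def:g'} are genuine $\Q$-Cartier classes, and that $H_2(X,\Q)$ and $H_4(X,\Q)$ are Poincar\'e dual by Theorem \ref{rPD2}. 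Since $\Lambda$ is generated by the $D^l_j$ and $L=L_1\simeq L_2$ by the $\ell_{j,l}$ (the identification being fixed by the choice of section, i.e. of Weierstrass model, as in Proposition \ref{TateAlg}), it suffices to evaluate the pairing on these classes.

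First I would establish the vanishing of the off-diagonal blocks. For $j\neq k$ the curve $\ell_{j,l}$ is a component of the fibre over a general point of $\Sigma_j$, whereas $D^m_k$ maps surjectively onto $\Sigma_k$; since a general point of $\Sigma_j$ does not lie on $\Sigma_k$, the fibre containing $\ell_{j,l}$ is disjoint from $D^m_k$, so $\langle \ell_{j,l}, D^m_k\rangle=0$. Thus the pairing matrix is block-diagonal, and it remains to treat each component $\Sigma_j$ separately.

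For a fixed component $\Sigma_j$ in the simply-laced case, I would choose a general point $b\in\Sigma_j$ and a general analytic disc $\Delta\subset B$ meeting $\Sigma_j$ transversally only at $b$. Because the singularities of $X$ are isolated and lie over finitely many points of $B$ of codimension at least two, a general $\Delta$ avoids them, and $S\stackrel{def}=\pi^{-1}(\Delta)$ is a smooth relatively minimal elliptic surface over $\Delta$ whose special fibre is the Kodaira fibre of Table \ref{tab:A}. Since $\Delta\cap\Sigma_j=\{b\}$, the restriction $D^l_j|_S$ equals the ruling fibre $\ell_{j,l}$, so the curves $\ell_{j,l}$ and the restrictions $D^m_j|_S$ are precisely the components $C_l$ of this Kodaira fibre, and by the projection formula $\langle \ell_{j,l}, D^m_j\rangle_X=(\ell_{j,l}\cdot D^m_j|_S)_S=(C_l\cdot C_m)_S$ is computed inside the surface $S$. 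By Kodaira's classification of singular fibres \cite{MR0184257} the intersection matrix of the fibre components is the negative of the affine Cartan matrix; discarding the component meeting the section --- which is exactly the effect of quotienting $L_1$ by the class of the general fibre $E$ in Definition \ref{L1} and of intersecting with $[B_1]^\perp$ in $L_2$ in Definition \ref{L2} --- yields the negative of the finite Cartan matrix of $\mathfrak g(\Sigma_j)$.

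The hard part, and the place where the argument genuinely departs from the smooth surface computation of \cite{GrassiMorrison11}, is the non-simply-laced case governed by Lemma \ref{def:g'}. There the ruled divisor $D^l_j$ is fibred over the branched cover $\Sigma_j'\to\Sigma_j$ of degree $2$ (or $3$ when $\mathfrak g(\Sigma_j)=\mathfrak g_2$), and the general fibre $\ell_{j,l}$ of the ruling need be neither reduced nor irreducible, while the monodromy of the cover permutes the components of the Kodaira fibre over the general point of $\Sigma_j$. I would argue that the classes surviving in $L$ and $\Lambda$ are precisely the monodromy-invariant combinations, and that evaluating the surface intersection numbers over the orbits of this monodromy folds the affine Dynkin diagram to the non-simply-laced one, so that the block acquires the asymmetric off-diagonal entries of the non-simply-laced Cartan matrix, with the ratio of the off-diagonal entries dictated by the degree of the cover. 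The main technical care here is to verify that the slicing argument and the projection formula remain valid over the branch points of $\Sigma_j'\to\Sigma_j$ and in the presence of the possibly non-normal divisors $D^l_j$ and non-reduced fibres $\ell_{j,l}$ noted after Lemma \ref{def:g'}, and that the isolated terminal singularities of $X$, which may sit over such special points, do not contribute to the generic intersection computation. Assembling the blocks then gives the negative of the Cartan matrix of $\mathfrak g=\bigoplus_j\mathfrak g(\Sigma_j)$ on both $L$ and $\Lambda$.
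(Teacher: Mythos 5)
Your proposal is correct and takes essentially the same route as the paper: the paper's proof likewise reduces to evaluating $\langle \ell_{j,k}, D^l_j\rangle$ over general points of $\Sigma_j$, where $X$ is isomorphic to the partial resolution produced by Tate's algorithm (Proposition \ref{TateAlg}, Lemma \ref{def:g'}), so that Kodaira's fiber intersection theory together with the monodromy folding yields the negative Cartan blocks. The only difference is compression: the steps you carry out explicitly (block-diagonality, transverse disc slicing with the projection formula, folding by the degree-$d$ cover) are absorbed in the paper into the citation of the Tate-algorithm structure and the equidimensionality of the fibration \cite{GrassiEqui}.
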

$\Lambda$ serves as the coroot lattice of the Lie group $G$ associated with $\mathfrak{g}$, and $\Lambda\otimes
U(1)$ serves as the Cartan subgroup, as in \cite[Lemma 1.2]{GrassiMorrison03}. 
\begin{proof}
 The partial resolution constructed in Tate's algorithm  is isomorphic to $X$ over the general points of $\Sigma$. Recall that  the fibration $\pi: X \to B$ is equidimensional \cite{GrassiEqui}.  
 Let $D^l_j$ and $\ell_{j,l}$ be defined as in  Corollary \ref{def:g'}.
 The curves $\{\ell_{j,l}\}$ generate $L$ over $\Q$, $\{D^l_j\}$ generate $\Lambda$;  $<\ell_{j,k} , D^l_j>= \ell_{j,k} \cdot D^l_j$  gives the negative of the entries of a block of the Cartan matrix.\end{proof}

 \begin{remark}\label{remarkCartan}
 The Poincar\'e pairing between $\Lambda$ and $L$  gives the transpose Cartan matrix; recall that the  Cartan matrix is not symmetric if $\mathfrak g$ is not simply laced. Note  also that 
 the rows of  (a block in) the Cartan matrix  are the Dynkin coefficients of the roots, the weights of the adjoint representation.
 In fact, we will see in the following Section \ref{subsection_reps} that associated to $\Sigma_j$ is an
  ``unlocalized" representation, which is precisely given by the adjoint representation for the simply laced algebras. 
\end{remark}

\subsection{Representations of (gauge) algebras and  the codimension two strata}\label{subsection_reps}~

The support of the discriminant locus $\Sigma$ of  a genus one fibration  
 has  a stratified structure given by its singularities. In this paper we focus on the codimension one strata, given by the irreducible components of codimension one in $B$, and  the codimension two strata, given by the singular locus of $\Sigma$ and more generally by  the codimension two components of $\Sigma$ in $B$.  To simplify the statements we assume that $\dim B=2$, however the statements also hold for higher dimensions with appropriate modifications. 
If $\dim B=2$, we denote by $Q$ a singular point of $\Sigma$.

The ``unlocalized "and ``localized" representations in the physics language are associated to the different codimension of the strata of  $\Sigma$ and both occur with a certain multiplicity which depends on the dimension of the base $B$. 
We will  present methods for computing  the multiplicities if $B$ is a surface in the following sections. The methods  can be extended to the presence of singularities, as we prove under certain general assumptions in Section \ref{chitop}.

To simplify the statements we assume that  $B$ is smooth, $W \to B$ be a Weierstrass model and $X \to B$  the birationally equivalent minimal model  with $\Q$-factorial terminal  Gorenstein singularities. Equivalently, let $X \to B$ be an elliptic variety
with  Gorenstein $\Q$-factorial terminal singularities and $W\to B$ the associated minimal model. We can assume $X \to B$ to be equidimensional and $B$ smooth. Let $B_1$ be a section of the fibration.

\subsubsection{The unlocalized representations}[Codimension one strata]\label{unloc}~

\begin{lemma}\label{method-unloc}~
\begin{enumerate}
\item[(i)]The intersection product between $\ell_{j,k}$ and $D_j^l$ described  in Proposition \ref{Cartan-proposition} gives  the positive simple root vectors
$ \alpha^l_k =  - \ell_{j,k} \cdot   D_j^l$, which are weight vectors associated with the representation ${\rm adj}_{ {\mathfrak g}(\Sigma_j)}$.
\item [(ii)] If  $\mathfrak{g}(\Sigma_j)$ is not simply laced, there is another naturally associated representation  $\rho_0^{d-1}(\Sigma_j)$, described in Table \ref{tab:A}.
\end{enumerate}
\end{lemma}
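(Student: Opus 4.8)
The plan is to obtain both representations directly from the intersection data already assembled, so that (i) becomes a translation of Proposition \ref{Cartan-proposition} into root/weight language while the genuine work is reserved for the non-simply-laced correction in (ii). First I would observe that, because threefold terminal singularities are isolated (Remark \ref{cdvHyp}) and each $\Sigma_j$ has codimension one, the general ruling fibers $\ell_{j,k}$ of Lemma \ref{def:g'} lie in the smooth locus of $X$; hence every intersection number $\ell_{j,k}\cdot D^l_j$ is computed exactly as in the smooth Weierstrass situation, and Proposition \ref{Cartan-proposition} applies without modification.

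For (i): by Proposition \ref{Cartan-proposition} together with Remark \ref{remarkCartan}, the array $(-\ell_{j,k}\cdot D^l_j)_{k,l}$ is the block of the (transpose) Cartan matrix of $\mathfrak g(\Sigma_j)$ indexed by the simple roots. Under the identification of $\Lambda$ with the coroot lattice (the classes $\{D^l_j\}$ playing the role of simple coroots $\alpha^\vee_l$, as recorded after Proposition \ref{Cartan-proposition}) and of $L$ with the root lattice, the number $\langle \alpha_k, \alpha^\vee_l\rangle$ is precisely the $l$-th Dynkin label of the simple root $\alpha_k$ in the fundamental-weight basis dual to $\{D^l_j\}$. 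Thus, as $l$ varies, $\alpha^l_k = -\ell_{j,k}\cdot D^l_j$ is the weight vector of $\alpha_k$; since the nonzero weights of the adjoint representation are exactly the roots and the simple roots are positive roots, the $\alpha^l_k$ are weight vectors of $\mathrm{adj}_{\mathfrak g(\Sigma_j)}$, which is the asserted unlocalized representation.

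For (ii): the essential input is Lemma \ref{def:g'}, which states that for non-simply-laced $\mathfrak g(\Sigma_j)$ the divisor $D^l_j$ is only generically ruled over a branched cover $\Sigma_j' \to \Sigma_j$ of degree $d$ ($d=2$ in general, $d=3$ for $\mathfrak g_2$), with fibers $\ell_{j,l}$ that need not be reduced or irreducible. I would realize this cover as the geometric incarnation of the outer automorphism (folding) producing $\mathfrak g(\Sigma_j)$ from its simply-laced parent $\mathfrak g'$, following the monodromy analysis in Appendix B of \cite{GrassiMorrison11}. Decomposing the components of the reducible or non-reduced fibers $\ell_{j,l}$ according to the sheets of the cover and pairing them with the $D^l_j$ yields, beyond the simple-root weights of (i), a further system of weight vectors; I would then verify that these are exactly the weights of $\rho_0^{d-1}(\Sigma_j)$ listed in Table \ref{tab:A}, equivalently that under $\mathfrak g(\Sigma_j)\subset \mathfrak g'$ one has $\mathrm{adj}(\mathfrak g') = \mathrm{adj}(\mathfrak g(\Sigma_j)) \oplus \rho_0^{d-1}(\Sigma_j)$.

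The hard part will be (ii): controlling the monodromy on the possibly non-reduced, reducible ruling fibers over each sheet of the branched cover and matching the resulting weight system, case by case, with Table \ref{tab:A}. Part (i) is a dictionary translation once the Cartan-matrix identification of Proposition \ref{Cartan-proposition} is available, whereas (ii) demands the full representation-theoretic bookkeeping of the folding, carried out with care that the geometric fibers are not reduced.
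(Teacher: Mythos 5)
Your proposal is correct and follows essentially the same route as the paper: part (i) rests on the smoothness of $X$ over the generic points of the codimension-one strata together with Remark \ref{remarkCartan} (rows of the Cartan block are Dynkin labels of simple roots, hence adjoint weights), and part (ii) identifies the branched cover of Lemma \ref{def:g'} with the folding by monodromy and invokes the branching decomposition $\mathrm{adj}_{\tilde{\mathfrak g}(\Sigma_j)} = \mathrm{adj}_{\mathfrak g(\Sigma_j)} \oplus \rho_0^{d-1}(\Sigma_j)$. The only difference is one of emphasis: where you anticipate a case-by-case matching of weight systems against Table \ref{tab:A}, the paper takes the branching rule itself as the definition of $\rho_0(\Sigma_j)$ and cites \cite{GrassiMorrison03} for the tabulated outcome, so no further verification is required.
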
 

\begin{proof}
$X$ is smooth over the generic point of the codimension one strata of $\Sigma$;
see also Remark \ref{remarkCartan}. This proves (i).
If $\mathfrak{g}(\Sigma_j)$ is not simply laced,
let  ${\tilde {\mathfrak g}}(\Sigma_j)$  be the Lie algebra associated with the { generic} fiber of $\Sigma_j$; it is a cover of the Lie algebra ${\mathfrak g}(\Sigma_j)$.       
$\rho_0(\Sigma_j)$  is the representation 
determined through the ``branching rules" which decompose
${\rm adj}_{ {\tilde {\mathfrak g}}(\Sigma_j) } $ as $ {\rm adj}_{ {\mathfrak g}(\Sigma_j)} \oplus \rho_0^{d-1}(\Sigma_j) \,,$
where $d$ is the degree of the cover introduced in Lemma \ref{def:g'} \cite{GrassiMorrison03}.
\end{proof}
The Lemma motivates the following
\begin{method}[Unlocalized representations]~
To each irreducible component of the codimension one strata $\Sigma_j$ one associates an {\it unlocalized representation} of $\mathfrak{g}(\Sigma_j)$ as follows:\\
If $\mathfrak{g}(\Sigma_j)$ is simply laced, the unlocalized representation is ${\rm adj}_{ {\mathfrak g}(\Sigma_j)}$.\\
If  $\mathfrak{g}(\Sigma_j)$ is not simply laced,  the unlocalized representation associated with $\Sigma_j$ is 
${\rm adj}_{ {\mathfrak g}(\Sigma_j)} \oplus \rho_0(\Sigma_j) \,,$
where $\rho_0(\Sigma_j)$ is summarized in Table \ref{tab:A}.

 If $\g (\Sigma_j)$ is simply laced, the multiplicity of the representation is  the genus of $\Sigma_j$, $g(\Sigma_j)$; if  $\g (\Sigma_j)$ is not simply laced, the multiplicity 
is $g(\Sigma'_j)- g(\Sigma_j)$, with $\Sigma'_j$ as in Lemma \ref{def:g'}.

\end{method}


\subsubsection{The localized representations}[Codimension two strata]\label{sec_localized}

 Let $Q$ be a singular point of  the discriminant $\Sigma$.
 The physics  predicts that certain representations are associated to $Q$.  We
present two ways, Method \ref{Method1} and  Method \ref{Method-KV}, to compute the unlocalized representations, building on \cite{Witten:1996qb},\cite{Intriligator:1997pq,Aspinwall:2000kf} and then \cite{KatzVafaMatter} as elaborated further in \cite{GrassiMorrison11}. Our general results imply that the methods can be extended to the case of  $\mathbb Q$-factorial terminal singularities, as we will verify explicitly under certain genericity assumptions in Section \ref{chitop}.

 The underlying principles and some first computations were outlined in \cite{Witten:1996qb},\cite{
Intriligator:1997pq,Aspinwall:2000kf} and \cite{KatzVafaMatter}. Various refinements and verifications, on smooth fibrations, have been made in the physics literature since.
 In the case of Calabi-Yau varieties, we will also show that the representations are independent of  the choice of the minimal model.
 
 \smallskip

The constructions described below  will give the trivial representation if $Q$ is replaced either by a general point of $B$, or a general point in  $\Sigma$ (the codimension one strata).

We make the following Conjecture, which  has been verified under general conditions even in the presence of $\Q$-factorial terminal singularities, as we will prove in this paper (see Table \ref{tab:A}), and in various other examples \cite{ArrasGrassiWeigand}:

\begin{conjecture}\label{Conjecture1}
 For $Q$ a singular point of the discriminant $\Sigma$ as above,  denote by $X_Q$ the corresponding fiber in $X$. Let  $\ell^a_Q$  be the class of an  irreducible component of $X_Q$  in ${H_2(X, \Z) \cap [B_1]^{\perp}}$.
 \begin{enumerate}
\item The intersection numbers with the ruled divisors $D_j^l$,
 $${ \beta^l(\ell^a_Q) =   - {\ell}^a_Q \cdot D_j^l,  \quad l = 1, \ldots, {\rm rk}(\mathfrak{g}({\Sigma_j})) \,,}$$
form the entries of a weight vector of an irreducible representation $\rho_{Q,a}$ of ${\mathfrak g}(\Sigma_j)$.
\item All weight vectors $\beta_p^l(\rho_{Q,a})$, labeled by $p \in \{1, \ldots, {\rm dim}(\rho_{Q,a}) \}$, are obtained by 
$\beta_p^l(\rho_{Q,a})  = -  C_p(\rho_{Q,a}) \cdot D_j^l $
with
$$C_p(\rho_{Q,a})   = \ell^a_Q + \sum_{k=1}^{{\rm rk}(\mathfrak{g}({\Sigma_j}))}  n_p^k  \, \ell_{j,k} ,\ \ n_p^k \in \mathbb Z \,.$$
\item Some of the curve classes $[C_p(\rho_{Q,a})]$ are represented by effective curves, and the remaining ones by anti-effective ones.
 \end{enumerate}
\end{conjecture}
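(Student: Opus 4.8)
The plan is to reduce the statement to the corresponding assertion for the fibral curves over a generic point of the codimension one stratum $\Sigma_j$, where the identification of intersection numbers with simple roots is already furnished by Proposition \ref{Cartan-proposition} and Lemma \ref{method-unloc}, and then to control the extra fibral components appearing over the codimension two point $Q$. First I would note that, by Theorem \ref{rPD2}, $X$ satisfies rational Poincar\'e duality, so the pairing $\ell \cdot D$ between fibral curves and the ruled divisors $D_j^l$ is well defined over $\Q$; the integrality asserted in (1) and (2) must then be extracted separately, using that the $D_j^l$ and the relevant curve classes are genuine integral homology classes. By equidimensionality of $\pi : X \to B$ \cite{GrassiEqui} the fiber $X_Q$ is one-dimensional, and I would describe it by specializing the local Tate/Kodaira model of Proposition \ref{TateAlg} from the generic point of $\Sigma_j$ to $Q$, so that the components $\ell^a_Q$ are precisely the new curves produced in this degeneration.

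For part (1) the key input is the Katz--Vafa mechanism \cite{KatzVafaMatter,GrassiMorrison11}: the enhancement of the singular fiber type at $Q$ corresponds to an algebra $\mathfrak{g}_Q \supsetneq \mathfrak{g}(\Sigma_j)$, and the components of $X_Q$ are indexed by the simple roots of $\mathfrak{g}_Q$. I would show that intersecting a new component $\ell^a_Q$ with the coroots $D_j^l$ (which by Proposition \ref{Cartan-proposition} realize the coroot lattice $\Lambda$ of $\mathfrak{g}(\Sigma_j)$) produces exactly the Dynkin labels of a weight, and that the collection of these weights is the one obtained by branching $\mathrm{adj}_{\mathfrak{g}_Q}$ to $\mathfrak{g}(\Sigma_j)$ and discarding the summand $\mathrm{adj}_{\mathfrak{g}(\Sigma_j)}$; each remaining irreducible summand is the $\rho_{Q,a}$ attached to the corresponding component. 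Irreducibility follows because the extra simple roots of $\mathfrak{g}_Q$ lie in a single $\mathfrak{g}(\Sigma_j)$-orbit.

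For part (2), I would generate the full weight system of $\rho_{Q,a}$ by repeatedly adding the simple-root curve classes $\ell_{j,k}$: geometrically, the classes $C_p(\rho_{Q,a}) = \ell^a_Q + \sum_k n_p^k \, \ell_{j,k}$ are the fibral curves obtained from $\ell^a_Q$ through the chain of $\mathbb{P}^1$'s along the resolved $\Sigma_j$-fiber, and the linear identity $\beta_p^l = - C_p \cdot D_j^l$ records the action of lowering operators on weights. Since every weight of an irreducible representation differs from the highest weight by a nonnegative integral combination of simple roots, and the root lattice is generated over $\Z$ by the $\ell_{j,k}$, the coefficients $n_p^k$ are integers. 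For part (3), the effective/anti-effective dichotomy is dictated by the relative Mori cone $\NE(X/B)$ of the chosen $\Q$-factorial terminal model: a class $C_p$ is effective precisely when it lies in $\NE(X/B)$, and since flopping $X$ across a wall of the relative movable cone exchanges an effective class with its anti-effective partner while preserving $\rho_{Q,a}$, the resulting split is exactly the one determined by the chamber of $X$; this simultaneously yields the minimal-model independence of $\rho_{Q,a}$ announced before the conjecture.

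The main obstacle will be establishing parts (1) and (2) rigorously \emph{in the presence of the $\Q$-factorial terminal singularities of $X$}, where $X_Q$ need not be a reduced Kodaira fiber and the components $\ell^a_Q$ may pass through singular points of $X$: here one disposes only of intersection theory over $\Q$, so both the integrality of the weight vectors and the identification of $X_Q$ with a Katz--Vafa degeneration must be argued without a global smooth resolution. Under the genericity hypotheses of Section \ref{chitop} these difficulties are controlled by the explicit local equations (as in the type $I_1$, type $II$, and conifold cases), and the ensuing dimension count feeds into Theorem \ref{Anomalycondition1}, which constitutes the evidence we give for the conjecture in general.
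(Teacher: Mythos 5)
There is a basic mismatch to flag first: the statement you are proving is stated in the paper as a \emph{conjecture}, and the paper offers no general proof of it. What the paper does is verify it under the genericity hypotheses of Section \ref{chitop} (Theorem \ref{Anomalycondition1}, Table \ref{tab:A}, including the three new models with $\Q$-factorial terminal singularities) and in examples from \cite{ArrasGrassiWeigand}. So your proposal should be judged as an attempt at an open problem, and as such it has genuine gaps rather than being a variant of an existing argument.

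The most serious gap is that your central mechanism fails exactly in the new situations the conjecture is designed to cover. You propose to describe $X_Q$ by specializing the Tate/Kodaira model, index its components by the simple roots of an enhanced algebra $\mathfrak{g}_Q$, and read off weights by intersecting the new components with the $D_j^l$. But when $X$ has $\Q$-factorial terminal singularities over $Q$, no new fibral components need appear at all: in the paper's own Case 3 of Theorem \ref{Anomalycondition1} (Model 5, fiber type $I_{2k+1}$, $\mathfrak{g} = \sp(k)$), the fiber over $Q^\ell_2$ is topologically the same as the generic fiber over $\Sigma_1$, and the curves that would realize the weights of $\fund_{\sp(k)}$ exist only after a small resolution, which does not exist precisely because the singularity is terminal and $\Q$-factorial (the \terminal\ entries of Table \ref{table:small}). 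The paper handles this not by intersection theory on $X$ but by passing to a double cover admitting a simultaneous resolution and applying branching rules (Method \ref{Method-KV}); your proposal implicitly assumes the equivalence of that branching method with the intersection-theoretic assignment of Method \ref{Method1}, an equivalence the paper itself records as a separate conjecture for which ``no general proof has been obtained.'' Two further points: your irreducibility argument for part (1) --- that ``the extra simple roots of $\mathfrak{g}_Q$ lie in a single $\mathfrak{g}(\Sigma_j)$-orbit'' --- is unjustified and generally false, since branching $\mathrm{adj}_{\mathfrak{g}_Q}$ typically produces several irreducible summands plus singlets (this is visible in the paper's own decomposition $\adj_{\su(2k+2)} \rightarrow \adj_{\su(2k+1)} \oplus \fund \oplus \fundbar \oplus 1$, further reduced to $\sp(k)$); and your last paragraph concedes that parts (1) and (2) are unestablished in the singular case, which is precisely the content at stake, so flagging the obstacle does not discharge it.
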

In the algebra-geometry dictionary, (2) states that  all the weights are obtained  by adding to  a weight vector $\beta^l(\ell^a_Q)$ the linear combinations of the positive simple roots $\alpha^l_k$ with suitable coefficients $n_p^k \in \mathbb Z$,
$ \beta_p^l(\rho_{Q,a}) =  \beta^l(\ell^a_Q)  + \sum_{k=1}^{{\rm rk}(\mathfrak{g}({\Sigma_j}))} n_p^k  \,  \alpha^l_k $.
{If $\ell^a_Q=\ell_{j,l}$ is the class of a ruling, then $\rho_{Q,a}={\rm adj}( {\mathfrak g}(\Sigma_j))$, in agreement with the first observation in the proof of Lemma \ref{method-unloc}.}

\smallskip 

\begin{definition}Let $C_p(\rho_{Q,a})   = \ell^a_Q + \sum_{k=1}^{{\rm rk}(\mathfrak{g}({\Sigma_j}))}  n_p^k  \, \ell_{j,k} , \ \ n_p^k \in \mathbb Z \, , $  as in Conjecture \ref{Conjecture1}.
 Let $M(\ell^a_Q)\stackrel{def}=\{C_p(\rho_{Q,a}) \}$  the collection of such curves and
$-M(\ell^a_Q)\stackrel{def}=\{-C_p(\rho_{Q,a})\}$.
\end{definition}

\begin{method}[Localized representations, via weight lattices from intersection theory] \label{Method1} With the notation above, we make the following assignments:
\begin{enumerate}
\item To each  irreducible fiber components $\ell^b_Q$, assign a  representation $\rho_{Q,b}$ as in Conjecture \ref{Conjecture1}. 
\item $\ell^a_Q \neq \ell^b_Q $ give independent representations if and only if  $M(\ell^b_Q) \neq \pm M(\ell^a_Q)$.
\item If $M(\ell^a_Q) = -M(\ell^a_Q)$ as a set, then  the assigned multiplicity to $\rho_{Q,a}$  is $\delta_a = \frac{1}{2}$, otherwise $\delta_a = 1$.
\item The full representation associated with $Q$, with respect to ${\mathfrak g}(\Sigma_j)$, is then
$$
\rho_Q = \sum_{\rho_{Q,a} \neq {\rm adj}_{\mathfrak{g}(\Sigma_j)}}  \delta_a \, \rho_{Q,a} \,,
$$
where the sum is over the independent representations different from  ${\rm adj}( {\mathfrak g}(\Sigma_j))$.
 \end{enumerate}
\end{method}

\begin{remark}
If $Q $ is at  the intersection of  two different components,  for example $\Sigma_i \cap \Sigma_j$,  and  ${\mathfrak g}(\Sigma_i) \neq \{e\}$ and ${\mathfrak g}(\Sigma_j) \neq \{e \}$, then
$Q$ gives rise to representations of ${\mathfrak g}(\Sigma_i) \oplus {\mathfrak g}(\Sigma_j)$. 
This must be taken into consideration in determining the final multiplicity of the representations at $Q$.
 \end{remark}

\begin{remark}
In this sense $L$ as in Definitions \ref{L1} and \ref{L2} together with the intersection pairing with $\Lambda$ defines the weight lattice of the total algebra $\mathfrak{g}$.
When $X$ is Calabi-Yau, two birationally equivalent resolutions $X$ and $X'$ of the same Weierstrass model $W$ give rise to the same representations $\rho_Q$ defined above, as studied in the physics literature e.g. in \cite{Intriligator:1997pq,Hayashi:2014kca,Esole:2014bka}.  We prove this in general in  Theorem \ref{Correps}. 
 \end{remark}

The novel aspect of this Section is also that the procedure outlined in Method \ref{Method1} continues to be applicable if $X$
has $\mathbb Q$-factorial terminal singularities. In this case, $X$ is the relative minimal model of 
$W$. 
However, special care must be taken in evaluating the intersection numbers determining the weights due to the presence of the singularities.
As we show  in  Case 3 in the proof of Theorem  \ref{Anomalycondition1} below, the singularity is associated to a (non)-trivial localized representation of the algebra.

\medskip

 Before describing the second method to determine the localized representations, we need the following 
 \begin{definition} \label{Defgz}
 Let $W \to Z$ be a  minimal Weierstrass model over a smooth surface $Z$. Let $z \in Z$ be a point and $C \subset Z$  be a general curve through $z$ in a (Euclidean) neighborhood of $z$. Consider the Weierstrass  surface  $W_{|C}$ restricted to $C$. Without loss of generality we assume also  that $W_{|C}$ defines  a minimal Weierstrass surface.
Then by  $\mathfrak{g} (z)$ we denote the ``gauge" algebra associated to $W_{|C}$ at the point $z$.
 \end{definition}
Note that the singularity of $W_{|C}$ in the fiber over $z$ is a rational double point, and $\mathfrak{g}(z)$ is the
simply laced Lie algebra with Dynkin diagram the dual graph of the curve of resolution.

\begin{method}[Localized representations, via ``Katz-Vafa's method"]\label{Method-KV}
Consider $W$, $B$ and $X$ as stated at the beginning of  Section \ref{subsection_reps}, and let $Q$ be a singular point of  the discriminant $\Sigma$.
\begin{enumerate}
\item Up to a change of parameter  $t = z^d$, there   is a family of disks  $C_{t}$ intersecting $\Sigma$ at $P_{t}$ with $P_0 = Q$ such that the singularities of $\pi^{-1}(C_t)$ admit a simultaneous resolution. 
\item Furthermore, there exists a space of versal deformations of $\pi^{-1}(C_0)$ which is simultaneously resolvable, and the parameter curve  $\{z\}$ of $C_{z^d}$ is a ramified cover of the parameter curve of the versal deformations
with ramification $c$ at $z=0$. Let  $b = d/c$ and locally $z^d = s^b$.
\item Then one can decompose
$${\rm adj}_{{\mathfrak{g}}(Q)} ={\rm \adj}_{{\mathfrak{g}}(Q_{s^b})}  \oplus \hat{\rho}_Q \oplus \bar{\hat{\rho}}_Q \oplus 1^{\oplus (\rk({\mathfrak{g}}_Q) -  \rk({\mathfrak{g}}(Q_{s^b}) )},
$$
with $\mathfrak{g}(Q)$ and $\mathfrak{g}(Q_{s^b})$ as in Definition \ref{Defgz}.
\item  { The representation $\rho_Q$ associated to $Q$ in $X$ is 
$$\rho_Q  = \frac{1}{b} {\rho}'_Q, $$
where ${\rho}'_Q$ follows from
decomposing $\hat{\rho}_Q$ into representations of $\mathfrak{g}$ 
as
\be \label{rhoQdecomp2}
 \hat{\rho}_Q =  \rho'_Q  \oplus  \rho_{\rm sing} \oplus    \oplus_i \frac{1}{2} \rho_0(\Sigma_i). 
 \ee
Here $\rho_{\rm sing}$ (if non-zero) is a singlet with respect to $\mathfrak{g}$ and the factors $\frac{1}{2} \rho_0(\Sigma_i)$  may appear in the decomposition  for those $\Sigma_i$ with $Q \in \Sigma_i$  only if $\mathfrak{g}(\Sigma_i)$ is non-simply laced and $Q$ is a ramification point for the associated monodromy. In this case $\rho_0(\Sigma_i)$ is as defined in Lemma \ref{method-unloc}.

}

\end{enumerate}
\end{method}

Note that (1) and (2)   are guaranteed by the Tyurina and Brieskorn-Grothendieck theorems \cite{BrieskornGrot,Slodowy}.

\begin{remark}
We show in the following section that Method \ref{Method-KV} continues to be applicable in the presence of $\mathbb Q$-factorial terminal singularities. 
\end{remark}

\begin{conjecture}Refinements of  Method \ref{Method1} and Method \ref{Method-KV} determine the same localized representations.
\end{conjecture}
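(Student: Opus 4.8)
The plan is to show that both methods extract the same weight system from a single geometric input, namely the degenerate fiber $X_Q$ over the codimension-two point $Q$ together with its intersection pairing against the ruled divisors $D_j^l$ of Lemma~\ref{def:g'}. The conceptual bridge is the Brieskorn--Grothendieck--Tyurina theory \cite{BrieskornGrot,Slodowy}, already invoked to guarantee the simultaneous resolution underlying Method~\ref{Method-KV}: the exceptional curves of the simultaneous resolution of the rational double point $\mathfrak{g}(Q)$ are in canonical bijection with the simple roots of $\mathfrak{g}(Q)$, and their intersection form is the negative of the Cartan matrix. This is exactly the dictionary---curves $\leftrightarrow$ roots, intersection $\leftrightarrow$ Cartan pairing---that underlies Proposition~\ref{Cartan-proposition} and Lemma~\ref{method-unloc} for the component algebra $\mathfrak{g}(\Sigma_j)$.

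First I would make the refinements precise on both sides. On the Method~\ref{Method1} side, the refinement specifies that the curve classes $\ell^a_Q$ to be used are exactly the classes of the irreducible components of $X_Q$ that are not already rulings $\ell_{j,k}$ of the codimension-one strata meeting $Q$; their intersection numbers $\beta^l(\ell^a_Q)=-\ell^a_Q\cdot D_j^l$ are the weights of $\rho_{Q,a}$ as in Conjecture~\ref{Conjecture1}. On the Method~\ref{Method-KV} side, the refinement records not merely the isomorphism class of $\rho'_Q$ but the actual weights appearing in the branching $\adj_{\mathfrak{g}(Q)}\supset\adj_{\mathfrak{g}(\Sigma_j)}\oplus\rho'_Q\oplus\cdots$, together with the identification of each branched weight with a root of $\mathfrak{g}(Q)$ and hence, by Brieskorn--Grothendieck, with an exceptional curve class over $Q$.

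The core step is to identify the two constructions weight by weight. The roots of $\mathfrak{g}(Q)$ decompose under $\mathfrak{g}(\Sigma_j)\hookrightarrow\mathfrak{g}(Q)$ into roots of $\mathfrak{g}(\Sigma_j)$ (the adjoint part) and the remaining roots, which are precisely the weights of $\rho'_Q$. Under the simultaneous resolution each such root is represented by an integral combination $C_p=\ell^a_Q+\sum_k n_p^k\,\ell_{j,k}$ of exceptional curves, and pairing against $D_j^l$ returns exactly the Dynkin label of that root with respect to the $l$-th coroot of $\mathfrak{g}(\Sigma_j)$. This matches term by term the formula $\beta_p^l(\rho_{Q,a})=-C_p\cdot D_j^l$ of Conjecture~\ref{Conjecture1}(2), so the weight sets coincide; closure of $M(\ell^a_Q)=\{C_p\}$ under adding simple roots is then the geometric counterpart of $\rho'_Q$ being a genuine $\mathfrak{g}(\Sigma_j)$-representation. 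Finally the multiplicities must be reconciled: the factor $\tfrac1b$ of Method~\ref{Method-KV}, coming from the ramification index $b=d/c$ of the cover of deformation parameters, should correspond to the half-multiplicities $\delta_a=\tfrac12$ assigned in Method~\ref{Method1} when $M(\ell^a_Q)=-M(\ell^a_Q)$, the point being that a weight system invariant under $\beta\mapsto-\beta$ is counted once by each convention only after the respective normalisation.

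The hard part will be the non-simply-laced case together with the factor $\tfrac1b$. When $\mathfrak{g}(\Sigma_j)$ is non-simply laced, the monodromy cover $\Sigma_j'\to\Sigma_j$ of Lemma~\ref{def:g'} twists the curve classes, and the two methods organise the folding differently: Method~\ref{Method1} works directly on $X$ and its divisors $D_j^l$ (which may fail to be normal), whereas Method~\ref{Method-KV} passes through the unfolded algebra $\tilde{\mathfrak g}(\Sigma_j)$ and then quotients by $\tfrac1b$. Proving that these two foldings agree---and that the singlet $\rho_{\rm sing}$ and the terms $\tfrac12\rho_0(\Sigma_i)$ in \eqref{rhoQdecomp2} are exactly what is needed to reconcile them---is where the genuine content lies; I expect this to require a case analysis along the Kodaira--Tate list of Table~\ref{tab:A}, verifying at each ramification point that the intersection-theoretic weight multiset on $X$ equals the $\tfrac1b$-reduced branching multiset. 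A further subtlety, special to this paper, is that $X$ carries $\Q$-factorial terminal singularities, so the numbers $\ell^a_Q\cdot D_j^l$ must be taken in the rational pairing of Definition~\ref{Lambda0}; one must then check that the refinements are stable under the small $\Q$-factorialisations of Theorem~\ref{Klqf}, which by Theorem~\ref{Correps} leave the representations unchanged.
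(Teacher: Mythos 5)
You should first be aware that the paper offers no proof of this statement: it is stated as a conjecture, and immediately after it the authors say that it ``has been confirmed for various classes of examples, but no general proof has been obtained.'' So there is no proof in the paper to compare your proposal against; it has to stand on its own, and it does not.

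First, as written your text is a plan rather than a proof: you yourself defer ``the hard part'' (the non-simply-laced folding, the factor $\tfrac1b$, the reconciliation of the multiplicities $\delta_a$ with the ramification data) to a future case-by-case analysis along Table \ref{tab:A}, which is essentially the status the paper already reports. Second, and more seriously, your core step --- representing every weight of the Katz--Vafa branching by an integral combination $C_p=\ell^a_Q+\sum_k n_p^k\,\ell_{j,k}$ of curve classes in $X$ and pairing against the divisors $D_j^l$ --- fails precisely at the points this paper is about. When $X$ has $\Q$-factorial terminal singularities over $Q$, the curves that would represent the extra weights do not exist in $X$: such singularities admit no small resolution (the points marked NSR in Tables \ref{tab:A} and \ref{table:small}), and in Case 3 of the proof of Theorem \ref{Anomalycondition1} the fiber over $Q_2^\ell$ is topologically identical to the generic $I_{2k+1}$ fiber, so Method \ref{Method1} applied naively on $X$ produces no weight vectors for $\fund_{\sp(k)}$ at all. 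That is exactly why the paper extracts $\rho_{Q_2}=\fund_{\sp(k)}$ there via the double cover and the deformation-theoretic Method \ref{Method-KV}, not by intersection theory on $X$. Your appeal to the rational pairing of Definition \ref{Lambda0} and to stability under the analytic $\Q$-factorializations of Theorem \ref{Klqf} does not repair this: the obstruction is the absence of the relevant curve classes in $H_2(X,\Z)\cap[B_1]^\perp$, not a failure of integrality of the pairing, and the local analytic small resolutions of Theorem \ref{Klqf} do not glue to a global model of the fibration on which Method \ref{Method1} can be run. Specifying which ``refinement'' of Method \ref{Method1} sees these missing weights --- deformation, base change, or passage to a non-$\Q$-factorial birational model --- is the actual content of the conjecture, and it remains open in your proposal.
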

The conjecture has been confirmed  for various classes of examples, but no general proof has been obtained.

\section{ (Gauge) algebras, representations, singularities and the topological Euler characteristic of Calabi-Yau threefolds}\label{chitop}

 Let   $X \to B$ be an elliptic Calabi-Yau threefold with $\Q$-factorial  terminal singularities and  Weierstrass model $W$, as in the previous sections.  
 
  Singular varieties are in fact unavoidable also in  the physics interpretation, even in the case of $\Q$-factorial Calabi-Yau threefolds with terminal singularities, when  there is a smoothing \cite{NamikawaSteenbrink}, but the smooth Calabi-Yau lies outside the loci of interest. This is the case of the Weierstrass models associated with the Jacobian of  general genus one fibrations without a section, which has $\Q$-factorial terminal singularities \cite{BraunMorrison}.
  For Calabi-Yau fourfolds it is known that even simple examples of isolated terminal singularities  of the type  $\C^4/ \Gamma$ cannot be smoothed \cite{SchlessingerInv1971,MorrisonStevens}.

 We find that while ``the gauge algebra" can be associated as in the smooth case, the  dictionary described in \cite{GrassiMorrison11, GrassiMorrison03, BMW}  between the ``anomaly constraints" in physics and the geometry of the Calabi-Yau must be modified  when the Calabi-Yau is singular.

 If $X=W$ is smooth, that is the gauge algebra is trivial,  $30 K_B^2 + \frac{1}{2} \chi_{top}(X) =0$ \cite[Theorem 2.2]{GrassiMorrison03}. More generally, we define the following invariant $\r$ and prove that, under general conditions, it contains information about (the dimensions of) certain representations of the associated gauge algebra.
 \begin{definition} \label{defr} Let
\bea
\r = 30 K_B^2 + \frac{1}{2}\left( \chi_{top}(X) +  \sum_P  m(P) \right),
\eea
where the sum is over the singular points $P$ of $X$ with Milnor number  $m(P)$. \end{definition}
By  Corollary \ref{mainbir}, $\r$ is independent of the choice of the particular minimal model $X$, the $\Q$-factorial terminal resolution of $W$. $\r$ is a topological invariant of $X$.

\subsection{Gauge algebra, general}\label{gags}

As in \cite{GrassiMorrison03}, we assume that the discriminant is of the form $\Sigma = \Sigma_1 \cup \Sigma_0$, where  $\Sigma_1$ is a smooth curve and $\Sigma_0$ denotes the locus where the general fiber is nodal   ($I_1$ fiber),   $\mathfrak{g}(\Sigma_0) = \emptyset$ and  $\mathfrak{g}(\Sigma_1)$ is  the associated Lie algebra as in Proposition \ref{TateAlg}. We also assume that the Weierstrass model is otherwise general   (``genericity assumption"). Let  $\lambda$ be the multiplicity  of  $\Sigma$ along $\Sigma_1$.
Our assumptions have the following implications, as summarized in  
 \begin{proposition}[Proposition 4.4 in \cite{GrassiMorrison03}]\label{link} 
\begin{enumerate}
\item \noindent $\Sigma_0 \cap \Sigma_1= \{ Q_1 ^1, \cdots, Q^{B_1} _1, Q^1 _2, \cdots, Q^{B_2} _2
\}.$

 The numbers $B_i$ are determined by the algebra $\mathfrak g$.
 \item (Equivalently:) $\Sigma_0 \cdot \Sigma _1 = (-12 K_B -\lambda \Sigma_1)
\cdot \Sigma_1= r_1 B_1 + r_2B_2,$ where the numbers $r_i$  and $\lambda$ are determined by the algebra $\mathfrak g$.
\item The local equation around    each point  $Q^\ell_i$ does not depend on $\ell$, but only on $i=1,2$;
 then without loss of generality we write $X_{Q_i}=\pi ^{-1} (Q_i^\ell)$.
\end{enumerate}
\end{proposition}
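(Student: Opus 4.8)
The plan is to observe that all three assertions concern only the discriminant $\Sigma \subset B$ and the local analytic structure of the Weierstrass model $W \to B$. Since these are determined by the Weierstrass data $(\alpha,\beta)$ alone, they are insensitive to the choice of $\Q$-factorial terminal model $X$ of $W$, which is equidimensional over $B$ by \cite{GrassiEqui}. I would therefore run the smooth analysis of \cite{GrassiMorrison03} essentially verbatim on $W \to B$, the only new input being that a relative minimal model $X$ exists, and transport the conclusions to $X$ along the birational morphism.

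For the second equality in (2) I would argue by divisor classes on $B$. Writing $\Delta = 4\alpha^3 + 27\beta^2$ for the Weierstrass discriminant and using the Calabi-Yau normalization ${\mathcal L} = \mathcal O_B(-K_B)$, the section $\Delta$ lies in $\mathcal O_B(-12 K_B)$, so $[\Delta] = -12 K_B$ in $\operatorname{Pic}(B)$. The decomposition $\Sigma = \Sigma_1 \cup \Sigma_0$ with multiplicity $\lambda$ along $\Sigma_1$ reads $[\Delta] = \lambda[\Sigma_1] + [\Sigma_0]$ as divisor classes, hence $[\Sigma_0] = -12 K_B - \lambda \Sigma_1$; intersecting with $\Sigma_1$ gives $\Sigma_0 \cdot \Sigma_1 = (-12 K_B - \lambda \Sigma_1)\cdot \Sigma_1$. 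The multiplicity $\lambda$ is the vanishing order of $\Delta$ along $\Sigma_1$, which is fixed by the Kodaira type and hence by $\mathfrak g(\Sigma_1)$ through the Tate form of Proposition \ref{TateAlg}.

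For (1) and the splitting $\Sigma_0\cdot\Sigma_1 = r_1 B_1 + r_2 B_2$, I would carry out the local Tate algorithm along $\Sigma_1$. Over the generic point of $\Sigma_1$ the fiber is the Kodaira type attached to $\mathfrak g(\Sigma_1)$, with prescribed vanishing orders of $(\alpha,\beta,\Delta)$; at the finitely many points where the residual branch $\Sigma_0$ meets $\Sigma_1$ these orders jump and the fiber enhances. Under the genericity assumption exactly two enhancement types occur, indexed by $i=1,2$, and at a point $Q$ of type $i$ the local intersection number $(\Sigma_0\cdot\Sigma_1)_Q$ equals a fixed integer $r_i$, obtained by subtracting the generic contribution $\lambda$ from the total vanishing order of $\Delta$ at $Q$. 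Both the type and $r_i$ are read off the Tate table and so depend only on $\mathfrak g$; summing local multiplicities over the $B_i$ points of type $i$ yields $\Sigma_0 \cdot \Sigma_1 = r_1 B_1 + r_2 B_2$, while the partition into two types and the formulas computing $B_1,B_2$ from intersection numbers on $B$ built out of $[\Sigma_1]$ and $K_B$ are dictated by $\mathfrak g$. Finally (3) follows from genericity: a general Weierstrass model with the prescribed behaviour along $\Sigma_1$ has all type-$i$ collision points locally analytically equivalent, the relevant monomials in $(\alpha,\beta)$ admitting a normal form depending only on $i$, so $X_{Q_i} = \pi^{-1}(Q_i^\ell)$ is independent of $\ell$.

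The main obstacle is conceptual rather than computational: because $X$ is singular, one cannot invoke the smooth Kodaira--Tate classification of the fibers over the collision points directly on $X$. The step that makes the argument go through is precisely the reduction in the first paragraph, namely that (1)--(3) are statements about $W \to B$ and the plane curve $\Sigma \subset B$, so the whole analysis is performed on the Weierstrass model and pulled back to $X$ via the equidimensional birational morphism, exactly as in the smooth case of \cite{GrassiMorrison03}. The point requiring genuine care, and where the genericity hypothesis is essential, is verifying that the $\Q$-factorial terminal singularities of $X$ lying over $\Sigma_1$ produce no enhancement types beyond the two already present for a smooth resolution.
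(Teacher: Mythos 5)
The paper gives no proof of this statement: it is quoted verbatim as Proposition 4.4 of \cite{GrassiMorrison03}, and the implicit justification for invoking it in the singular setting is exactly the reduction you make in your first paragraph --- all three claims concern only the discriminant $\Sigma\subset B$ and the Weierstrass data $(\alpha,\beta)$, hence are insensitive to whether the relative minimal model $X$ is smooth or has $\Q$-factorial terminal singularities. Your reconstruction (the class $[\Delta]=-12K_B$, the residual class $-12K_B-\lambda\Sigma_1$, the Tate-algorithm case analysis at the collision points, and genericity for the local normal forms) is precisely the argument of that reference, so your proposal is correct and follows essentially the same route as the paper's source.
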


As in \cite{GrassiMorrison03} we make the following
\begin{definition} \label{def:chargedim}
Let $\rho$ be a representation of a Lie algebra
$\mathfrak{g}$,
with Cartan subalgebra $\mathfrak{h}$.
The {\em charged dimension of $\rho$} is
$(\dim \rho)_{ch}= \dim (\rho)- \dim (ker \rho|_{\mathfrak{h}})$.
\end{definition}
For example, if $\rho$ is the adjoint representation then
$$(\dim \adj) _{ch}= \dim \mathfrak{g}- \dim \mathfrak{h}= \dim G - \rk G.$$

\begin{thm} \label{Anomalycondition1} 
Let $X, W$ and $\Sigma$ be as above and $\r$ as in Definition \ref{defr}; let $Q \in \Sigma_1$ denote the singular points of $\Sigma$. 
Let $\rho_Q$ be the 
 associated localized representation obtained as in Section \ref{sec_localized}.
   $\rho_Q$ is given in Table \ref{tab:A}, a modified version of Table A in \cite{GrassiMorrison11}. 
Let $P$ denote the singular points of $X$ with Tyurina number $\tau(P)$. 
Then 
\begin{eqnarray} \label{requation}
 \r &=& (g-1) (\dim \adj) _{ch} + (g' -g)  (\dim \rho_0)_{ch}  + \sum_Q    (\dim \rho_Q)_{ch} +    \sum_P \tau(P) \,.
\end{eqnarray}
Here $g\stackrel{def}=g(\Sigma_1)$ and $g'\stackrel{def}=g(\Sigma'_1)$ denote the genus of the discriminant component $\Sigma_1$ and, respectively, of its finite branched cover $\Sigma_1'$ occurring in Lemma \ref{def:g'} and ${\rm adj} = {\rm adj}_{\mathfrak{g}(\Sigma_1)}$, $\rho_0 = \rho_0(\Sigma_1)$ are the unlocalized representations according to Lemma \ref{method-unloc}.
\end{thm}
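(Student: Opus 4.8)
The plan is to compute $\chi_{top}(X)$ by the additivity and multiplicativity of the topological Euler characteristic over the stratification of $B$ by the discriminant, as in \cite{GrassiMorrison03}, and to isolate the contribution of the singular points of $X$ via the deformation-theoretic results of Section \ref{cxDefMT}. Since under the genericity assumption the singularities $P$ are weighted-homogeneous hypersurface singularities, Saito's theorem (Remark \ref{Saito}) gives $m(P)=\tau(P)$, and Theorem \ref{main} rewrites the invariant as
\[
\r = 30K_B^2 + \tfrac12\bigl(\chi_{top}(X) + \textstyle\sum_P m(P)\bigr) = 30K_B^2 + \bigl(\kD(X)-\cD(X)\bigr) + \sum_P \tau(P).
\]
Thus the theorem is equivalent to the anomaly identity
\[
30K_B^2 + \kD(X) - \cD(X) = (g-1)(\dim \adj)_{ch} + (g'-g)(\dim \rho_0)_{ch} + \sum_Q (\dim \rho_Q)_{ch},
\]
in which no explicit Milnor or Tyurina number appears; the singularities enter only implicitly through the values of $\kD(X),\cD(X)$ for the singular model. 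Corollary \ref{mainbir} then shows immediately that every term of \eqref{requation} is a birational invariant of the minimal model.

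Next I would stratify $B=(B\setminus\Sigma)\sqcup\Sigma_1^{\circ}\sqcup\Sigma_0^{\circ}\sqcup T$, where $T$ is the finite set of special points (the points $Q_i^\ell$ of Proposition \ref{link} together with the images of the remaining singular points of $X$), and $\Sigma_1^{\circ},\Sigma_0^{\circ}$ are the two components with $T$ removed. Over $B\setminus\Sigma$ the fibre is a smooth elliptic curve and contributes $0$. The decisive structural point is that threefold terminal singularities are isolated (Remark \ref{cdvHyp}), so $X$ is smooth away from the finite set $\{P\}$ and in particular over the generic point of every component of $\Sigma$; hence over $\Sigma_1^{\circ}$ and $\Sigma_0^{\circ}$ the fibration is topologically identical to the smooth model of \cite{GrassiMorrison03}. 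Consequently the codimension-one contributions are unchanged: using $\chi(\Sigma_1)=2-2g$ together with Lemmas \ref{def:g'} and \ref{method-unloc} reproduces the unlocalized terms $(g-1)(\dim \adj)_{ch}+(g'-g)(\dim \rho_0)_{ch}$, while the $\Sigma_0$-stratum, combined with the intersection data $\Sigma_0\cdot\Sigma_1$ of Proposition \ref{link} and the trivial-algebra baseline $30K_B^2+\tfrac12\chi_{top}=0$, accounts for the $30K_B^2$ term exactly as in the smooth case.

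It then remains to analyse the special fibres $X_{Q}=\pi^{-1}(Q)$ over the points of $T$, which carry all singular points $P$ of $X$. In the smooth model the extra fibre components over a point of $T$ encode, through the intersection pairing with the ruled divisors $D_j^l$ (Method \ref{Method1}, Conjecture \ref{Conjecture1}), the localized representation $\rho_Q$ and contribute $(\dim \rho_Q)_{ch}$. I would carry out the local computation in the three cases that occur under the genericity assumption: Case~1, the conifold points $z_0^2+\sum_{i=1}^3 z_i^2=0$ at self-intersections of $\Sigma_0$ (Example \ref{terminalconifolds}), which are uncharged; Case~2, the points $z_0^a+\sum_{i=1}^3 z_i^2=0$ with $a$ odd over smooth points of the discriminant, where $X$ is a rational homology manifold and the localized representation is trivial (Example \ref{general}); and Case~3, the points $Q\in\Sigma_1\cap\Sigma_0$ carrying a nontrivial $\rho_Q$ at which $X$ is singular. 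In each case one compares the Euler characteristic of the singular fibre in $X$ with that of the corresponding resolved fibre; the defect is governed by the local topology of the isolated singularity, and the $+\tfrac12\sum_P m(P)$ already present in $\r$ restores precisely the smooth-model count $(\dim \rho_Q)_{ch}$, leaving the residual $\sum_P\tau(P)$ by Saito's theorem. Summing the codimension-one and codimension-two contributions yields \eqref{requation}.

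The main obstacle is exactly this local bookkeeping at the singular points: one must show that the difference between the singular-fibre Euler characteristic computed in the $\Q$-factorial terminal model and the value in a smooth model is compensated term by term by the Milnor-number contribution, so that the half-integer multiplicities $\delta_a$ of Method \ref{Method1}, the monodromy covers of Lemma \ref{def:g'}, and the factors $\tfrac12\rho_0(\Sigma_i)$ appearing in the decomposition \eqref{rhoQdecomp2} of Method \ref{Method-KV} recombine into the charged dimensions $(\dim \rho_Q)_{ch}$ recorded in Table \ref{tab:A}. This is where the singularity theory of Section \ref{BG} and the Katz--Vafa-type analysis must be reconciled with the purely topological Euler-characteristic count, and it is the delicate heart of the argument.
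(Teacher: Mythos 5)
Your proposal has a genuine gap, and it is exactly the step you yourself flag as ``the delicate heart'' and then do not carry out. The preliminary reductions are fine and consistent with the paper's own strategy: the rewriting of $\r$ via Theorem \ref{main} and Saito's theorem is correct, the reduction to the smooth case of \cite{GrassiMorrison03} plus finitely many singular models is exactly how the paper proceeds, and the stratification/deconstruction of $\chi_{top}(X)$ over $B\setminus\Sigma$, $\Sigma_1^{\circ}$, $\Sigma_0^{\circ}$ and the special points is the right framework (the paper invokes the same deconstruction, with the corrected fiber data of Tables \ref{tab:A} and \ref{table:small}). For the two singular models with trivial gauge algebra (the conifold points on the $I_1$ discriminant and the $A_2$ points on the type $II$ component) your outline suffices, since there are no charged representations and only the Milnor/Tyurina numbers enter. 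But the theorem also asserts that $\rho_Q$ \emph{is the representation given in Table \ref{tab:A}}, and the one genuinely new case is Model 5: generic fiber $I_{2k+1}$ over $\Sigma_1$, $\mathfrak{g}=\sp(k)$, with $\Q$-factorial terminal singularities in the fibers over the points $Q^\ell_2$ where, topologically, the fiber does \emph{not} degenerate further. Your proposal never determines $\rho_Q$ there, and Euler-characteristic bookkeeping cannot do so: comparing $\chi$ of the singular fiber with a (locally existing, globally non-projective) resolved fiber produces a number, not a representation, and your claimed mechanism that ``$+\tfrac12\sum_P m(P)$ restores the smooth-model count'' is precisely the identity to be proved, not an argument for it.

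The paper closes this gap by a representation-theoretic computation that your plan omits: one passes to the double cover $\hat X$ of the fibration, where the algebra becomes $\su(2k+2)$ and a simultaneous resolution exists (so $d=2$, $b=1$ in the notation of Method \ref{Method-KV}); the fiber over $Q^\ell_2$ becomes $I_{2k+2}$ in the cover, and the branching rules
\begin{equation*}
\adj_{\su(2k+2)} \longrightarrow (\adj_{\su(2k+1)})_0 \oplus (\adj_{\mathfrak{u}(1)})_0 \oplus (\fund_{\su(2k+1)})_1 \oplus \overline{(\fund_{\su(2k+1)})_1}, \qquad
\fund_{\su(2k+1)}\big|_{\sp(k)} = 1 \oplus \fund_{\sp(k)},
\end{equation*}
identify the localized representation as $\rho_Q=\fund_{\sp(k)}$, consistently with (\ref{rhoQdecomp2}) since $Q^\ell_2$ is not a monodromy branch point. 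Only with this in hand can one evaluate the right-hand side of (\ref{requation}) as $(g-1)(\dim\mathfrak{g}-\rk\mathfrak{g}) + (g'-g)\mathcal{R}_0 + B_1\mathcal{R}_1 + B_2\mathcal{R}_2 + B_2$, using the counts $B_1,B_2$ of Proposition \ref{link} and the genus formulas, and check it against the deconstructed $\chi_{top}(X)$ plus the Milnor correction. A secondary, repairable inconsistency: after cancelling $\sum_P\tau(P)$ from both sides in your first step, your localized analysis reverts to the uncancelled form of the identity, so the bookkeeping you propose to verify is not even the one your reduction left you with.
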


\begin{table}[ht!]

\begin{center}
\scalebox{0.7}{
\begin{tabular}{|| c|c|c||c|c|c||c|c|c|c||} \hline
Number &Type & $ \mathfrak{g}$ &  $\rho_0$ &  $\rho_{Q^\ell _1}$ & $\rho_{Q^\ell_2}$ & $(\dim \adj) _{ch} $&${(\dim  {\rho_0})}_{ch}$&$\dim {( \rho_{Q^\ell_1})}_{ch}$&${\dim {( \rho_{Q^\ell_2})}_{ch}}$ \\ \hline
1 & $I_{1}$ & $\{e\} $    &    &-- &-- & $0$ &$0$&$0$  &  0 \\ \hline
2 &$I_{2}$ & $\su (2) $   &    &--&$\fund$ & $2$ &$0$&  $0$  & $2 $ \\ \hline
3 &$I_{3}$ & $\su (3) $      &  &-- &$\fund$  & $6$ &$0$&  $ 0$  & $3$   \\ \hline
4 &$I_{2k} $, $k\ge2$ & $\sp (k) $    &$\Lambda^2_0$&--&
    $\fund$ &$2k^2$  &$2k^2-2k$&$0$&$2k $    \\ \hline
5 &$I_{2k+1}$, $k\ge1$ & $\sp (k)$     &$\Lambda^2+2\times\fund$ &$\frac12\fund$  & $\fund$ &$2k^2$  &$2k^2+2k$&$k$ &  $2k$ \\ \hline
6 &$I_{n}$, $n\ge4$ & $\su (n) $ && $\Lambda^2$&  $\fund$ & $n^2 -n$ &$0$& $\frac12(n^2-n) $  & $n $ \\ \hline
7 &$II$ & $\{e\}$ &  &-- & & $0$ &$0$&  0  &     \\ \hline
8 &$III$ & $\su (2)$ &&   $2\times\fund$&  & $2$ &$0$&$4$  &  \\ \hline
9 &$IV$ & $\sp(1)$ & $\Lambda^2+2\times\fund$&$\frac12\fund$ & & $2$ &$4$ & $1$  &    \\ \hline
10 &$IV$ & $\su (3)$ && $3\times\fund$& & $6$ &$0$&  $9 $  &\\ \hline
11 &$I_0^*$ & $\mathfrak g_2$ &$\mathbf{7}$&-- &  &  $12$ &$6$&$0$ &    \\ \hline
12 &$I^*_{0}$ & $\spin (7) $&$\vect$&-- &$\spinrep$  &  $18$ &$6 $&$0$ & $8 $\\ \hline
13 &$I^*_{0} $ & $\spin (8) $ &&  $\vect$&$\spinrep_\pm$ &  $24$ &$0$&$8$ & $8$    \\ \hline
14 &$I^*_{1}$ & $\spin (9) $ &$\vect$&-- &$\spinrep$ &  $32$ &$8$&$0$  &$16$  \\ \hline
15 &$I^*_{1} $ & $\spin (10) $ && $\vect$ & $\spinrep_\pm$&  $40$ &$0$&$10$ & $16$     \\
  \hline
16 &$I^*_{2}$ & $\spin (11) $&$\vect$&-- &$\frac12\spinrep$  &  $50$ &$10$&$0$  &$16 $  \\
   \hline
17 &$I^*_{2} $ & $\spin (12) $ &&  $\vect$& $\frac12\spinrep_\pm $ &  $60$ &$0$& $12$& $16$\\ \hline
18 &$I^*_{n}$, $n\ge3$ &$\so (2n+7)$&$\vect$&-- &\nonmin &  $2(n{+}3)^2$ &$2n{+}6$&$0$  & \nonmin \\
   \hline
19 &$I^*_{n} $, $n\ge3$ & $\so (2n+8) $&&  $\vect$&\nonmin&  $2(n{+}3)(n{+}4)$ &$0$& $2n{+}8 $   & \nonmin    \\ \hline
20 &$IV ^*$ & $\mathfrak f_4$ &$\mathbf{26}$&--&  &  $48$ &$24$&$0$ &\\ \hline
21 & $IV ^*$ & $\mathfrak e_6$ &&  $\mathbf{27}$ &  & $72$&$0$& $27 $  &  \\ \hline
22 & $III ^*$ & $\mathfrak e_7$ & &$\frac12\mathbf{56}$ &  &  $126$ &$0$& $28 $    & \\ \hline
23 & $II ^*$ & $\mathfrak e_8$ & &\nonmin  & & $240$ &$0$&  \nonmin  &      \\ \hline
\end{tabular}
}
\end{center}
\medskip
\caption{ \small{The representations which occur under our ``generic''
hypotheses. 
The associated representation is independent of the particular resolution. Cases with
non-minimal Weierstrass model are denoted ``\nonmin''. 
${(\dim  \rho_i)_{ch}= {\boldmath \mathcal R}_i .}$}}\label{tab:A}
\end{table}

\begin{table}[ht]
{
\renewcommand{\arraystretch}{1.5}

\begin{center}
\scalebox{0.9}{
\begin{tabular}{||c|c|c|c|c|c|c|c|c||} \hline
Number & Type & $  \mathfrak{g} $   & $Q^\ell_1$ & $Q^\ell_2$ & $ \chi(X_{Q^\ell_1}) $ & $ \chi(X_{Q^\ell_2})$ & $\tau(P_1)$ & $\tau (P_2)$\\ \hline
1 & $I_{1}$ &  $\{e\}$ &   $II$   & $I_1$   \,  (\terminal)  & 2 ($II$) & 1 &0 &1\\ \hline
5 & $I_{2k+1}, k \ge1$ & $\sp (k)$ &  $I_{2k-2}^\ast$ &  $I_{2k+1}$ \,  (\terminal)   &   $k+2$ (br.)&  2k+1  &1 & 0 \\ \hline
7 & $II$ & $\{e\}$ & $III$  \,  (\terminal)  & & 2   & & 2 & \\  \hline
\end{tabular}}
\end{center}
}
\medskip
\caption{\small{The fiber types listed in column 4 and in column 5  correspond to the vanishing orders of  the Weierstrass model and not to the topology of the fiber  $X_{Q^\ell_i}$ of the minimal terminal $\Q$-factorial resolution.  
In the last two columns we list the Milnor-Tyurina numbers  at the points  with $\mathbb Q$-factorial terminal singularities (with no small resolution \terminal).
}}\label{table:small}
\end{table}


\begin{proof}[Proof of Theorem \ref{Anomalycondition1}]
If $X$ is smooth, $m(P) = \tau(P)= 0$, the Theorem has been proved in  \cite{GrassiMorrison03} 
 by deconstructing $\chi_{ top}(X)$ with the help of the Mayer-Vietoris sequence and explicitly comparing both sides of (\ref{requation}) for all possible 20 types of Weierstrass models subject to the stated assumptions. 
 In particular, the dimensions ${\boldmath \mathcal R}_i = (\dim  \rho_i)_{ch}$ include the multiplicities of the representations as given in Table \ref{tab:A} . 
  If $X$ is singular, in our general hypothesis the Milnor and Tyurina number are equal \cite{SaitoK1971}. There are three more cases to consider.
 These are listed as Models number $1$, $5$ and $7$ in  Tables \ref{tab:A}, \ref{table:small}  taken from \cite{GrassiMorrison03} with the information on the singular models completed. In all these cases, $\chi_{top}(X)$ is computed via deconstruction \cite{GrassiMorrison03} as summarized in equ. (A.11) of \cite{ArrasGrassiWeigand}.
 \\
{\bf Case 1 (Model 1 in Table \ref{tab:A}):}
The fiber over generic points of $\Sigma_1$ is of Kodaira Type $I_1$ and the gauge algebra is trivial.
In the fibers over the  points  $Q^\ell_2$ there are $\mathbb Q$-factorial terminal singularities with Milnor and Tyurina numbers $m(P_2) =\tau(P_2)= 1$ (Kleinian $A_2$ or conifold); note that these are not locally analytically $\mathbb Q$-factorial. 
Since the gauge group is trivial, no charged representations are  present. The  claim then follows. 
 \\
{\bf Case 2 (Model 7 in Table \ref{tab:A}):}
The fiber over generic points of $\Sigma_1$ is of Kodaira Type $II$ and  the gauge algebra is trivial.
In the fibers over the  points  $Q^\ell_1$ there are $\mathbb Q$-factorial terminal singularities (Kleinian $A_3$) with $m(P_1) =\tau(P_1) = 2$.  
There are again no charged representations, and the Theorem follows.\\
{\bf Case 3 (Model 5 in Table \ref{tab:A}):} 
The fiber over generic points of $\Sigma_1$ is locally of Kodaira Type $I_{2k+1}$ 
and associated gauge algebra $\mathfrak g= \mathfrak{sp}(k)$. 
In the fibers over the  points  $Q^\ell_2$ there are  $\mathbb Q$-factorial  terminal singularities with $m(P_2)= \tau(P_2) =1$; topologically the fiber at $Q^\ell_2$ is the same as the general fiber over $\Sigma_1$. 
We claim that the singularity induces a  representation associated to $Q^\ell_2$, and that it  is  $\rho_2 = \fund_{\mathfrak{sp}(k)}$. 
This representation can be understood by considering the double cover $\hat X$ of the fibration  with $I_{2k+1}$ fibers over generic points of $\Sigma_1$, with associated algebra $\mathfrak{su}(2k+2)$. In the language of Method \ref{Method-KV}, this double cover admits a simultaneous resolution. Hence $t = z^2$, i.e. $d=2$.
In the double cover, the  the fiber at $Q^\ell_2$ becomes $I_{2k+2}$.
The versal deformations of this singularity  in $\hat X$  are parametrized by a deformation parameter $s = t =z^2$, leading to the parameter $b=1$. 
Following  Method \ref{Method-KV} (and the ``branching rules") we decompose the adjoint of $\mathfrak{g}(Q_2^\ell) = \mathfrak{su}(2k+2)$ into a representation of $\mathfrak{g}((Q_2^\ell)_s) = \mathfrak{su}(2k+1)$,
\begin{eqnarray}
\mathfrak{su}(2k+2) &\rightarrow& \mathfrak{su}(2k+1) \oplus \mathfrak{u}(1) \\
\adj_{\mathfrak{su}(2k+2)} &\rightarrow& (\adj_{\mathfrak{su}(2k+1)})_0 +  (\adj_{\mathfrak{u}(1)})_0  +  (\fund_{\mathfrak{su}(2k+1)})_1 + \overline{(\fund_{\mathfrak{su}(2k+1)})_{1}}
\end{eqnarray}
and further decompose $\fund_{\mathfrak{su}(2k+1)}$ into a representation of $\mathfrak{sp}(k)$, 
\begin{eqnarray}
\fund_{\mathfrak{su}(2k+1)} |_{\mathfrak{sp}(k)} = 1 + \fund_{\mathfrak{sp}(k)}.
\end{eqnarray}
Note that this decomposition is consistent with the form of formula (\ref{rhoQdecomp2}) because $Q_2^\ell$ is not a branch point for monodromy.
The  $\mathfrak{sp}(k)$ charged part of the decomposition is the representation associated with $Q^\ell_2$, i.e. $\rho_2 = \fund_{\mathfrak{sp}(k)}$.
In addition $\rho_0 = {\bf \Lambda}^2 + 2 \fund$ \cite{GrassiMorrison03}.
The RHS of (\ref{requation}) evaluates to $(g-1) ({\rm dim}(\mathfrak{g}) - \rk (\mathfrak{g})) + (g'-g) {\mathcal R}_0 + B_1 {\mathcal R}_1 + B_2  {\mathcal R}_2 + B_2$, with ${\mathcal R}_i= (\dim  \rho_i)_{ch}$ as given in Table \ref{tab:A} and with $g'-g = \frac{1}{2} \Sigma_1 \cdot (\Sigma_1 - K_B)$, $g-1 = \frac{1}{2} \Sigma_1 \cdot (\Sigma_1 + K_B)$ \cite{GrassiMorrison03}. The claim follows. 
\end{proof}

\subsection{F-theory interpretation}\label{FtheoryInterpretation}  In the physics literature mostly smooth models have been considered; in particular the ``F-theory" interpretation of the correpondence between singularities and algebras is on manifolds. However,  $\Q$-factorial terminal  singularities occur naturally, for example in the Jacobian variety of a genus one fibration  \cite{BraunMorrison}, in certain fiber products of rational elliptic surfaces  \cite{Morrison:2016lix},
 as well as F-theory duals of generic non-geometric compactifications of the heterotic string as studied in \cite{LustTFects,Font:2017cya}. 

Theorem \ref{Anomalycondition1}  is consistent with the cancellation of gravitational anomalies in the six-dimen-sional effective theory obtained by compactification of ``F-theory" on $X$, 
\begin{equation} \label{HminusV}
H - V + 29 T  = 273, 
\end{equation}
even when $X$ has singularities. Here
\begin{equation*}
{\rm dim}(\mathfrak{g})=V, \ \ h^{1,1}(B) -1=T
\end{equation*}
 are the  ``number of vector multiplets" and the ``number of tensor multiplets", respectively.
  The ``number of hypermultiplets" $H$ splits into the number of  ``charged" and ``uncharged"  hypermultiplets,
\begin{equation*}
H = H_{ unch} + H_{ ch} ,  \end{equation*}
where $ H_{ unch}  $  is the dimension of  the complex deformations $+1$  and $H_{ch}$  is related to the dimension of the algebra and its representations, with multiplicities. 
Theorem \ref{Anomalycondition1} indicates that  $H_{ch}$ and $H_{unch}$ decompose in localized and unlocalized summands.  We make the following

\begin{definition}\label{Hch}~

\begin{enumerate}
\item[(i)] $H_{ch} =  H_{ch}^{unloc} +  H_{ch}^{loc}$ , with  \\
$H_{ch}^{unloc} =   g \, (\dim \adj_\mathfrak{g})_{ch}  +  (g' - g) (\dim \rho_0)_{ ch}$ and 
 $H_{ ch}^{loc} = \sum_Q    (\dim \rho_Q)_{ch} .$
\item[(ii)] $H_{unch} =  H_{unch}^{unloc} +  H_{unch}^{loc}$, with $\\
H_{ unch}  =1 + \cD(X)$ and  $H_{unch}^{loc} = \sum_P \tau(P) .$
\end{enumerate}
\end{definition}
The ``uncharged localized" hypermultiplets counted by $H_{ unch}^{loc} = \sum_P \tau(P)$ are the number of versal deformations of the singularities at $P$ in  Remark \ref{remarklocalized}. The splitting motivates Conjecture \ref{localtoglobaldef}.

With this definition, and in presence of singularities, Theorem \ref{Anomalycondition1} is equivalent to the gravitational anomaly cancellation condition (\ref{HminusV}), where one has to use that 
$K_B^2 = 10 - h^{1,1}(B)$ and, 
for the models satisfying the "genericity assumption" of Theorem \ref{Anomalycondition1}, $h^{1,1}(X) = 1 + h^{1,1}(B) + {\rm rk}(\mathfrak{g})$ as well as $m(P) = \tau(P)$ \cite{GrassiMorrison11}, \cite{ArrasGrassiWeigand}.

\subsection{Mordell-Weil group and several components}~

When the Mordell-Weil group ${\rm MW}(X)$ of the elliptic fibration has rank $r$,  the gauge algebra includes also an abelian part $\frak{u}(1)^{\oplus r}$, 
see Section \ref{agas}.   The codimension two strata of  $\Sigma_0$  will include additional points ${C}_r$ (over which the fiber is  Kodaira type $I_2$, $X$ locally smooth).
Then according to our assignment,  we expect an associated representation associated to the abelian part of the algebra  $\frak{u}(1)^{\oplus r}$.
In the physics framework, this  is  the contribution of  the extra "charged singlets" localized at $C_r$ to $H_{ch}$  \cite{Morrison:2012ei}.
If $X$ is otherwise general then (\ref{requation})  in Theorem \ref{Anomalycondition1} continues to hold, but on the RHS we add the additional contribution of $\sum_{C_r}$. The explicit counting of the points $Q$ furthermore changes accordingly.

\begin{conjecture} 
Let $X \to B$ be an elliptic Calabi-Yau threefold with $\Q$-factorial terminal singularities $\{P\}$, the relative minimal model of a Weirstrass model $W \to B$.\\
Assume    ${\rm rk}({\rm MW}(X)) = r$,     that the discriminant is of the form $\Sigma= \Sigma_0 \cup \Sigma_1 \cup \ldots \cup \Sigma_N $ with  the simple algebra $\mathfrak{g}_i$  associated to $\Sigma_i$ and that the Weierstrass model is otherwise general. 
Let 
\begin{equation*}
\r' \stackrel{def}= 30 K_B^2 + \frac{1}{2}\left( \chi_{top}(X) -  \sum_P  m(P) + 2 \sum_P \tau(P) \right) \,.
\end{equation*}
Then 
\begin{equation*}
\r' = \sum_i(g_i-1) (\dim \adj_i) _{ch} + (g_i' -g_i)  (\dim \rho_{0,i})_{ch}  + \sum_Q    (\dim \rho_Q)_{ch} +     \sum_{C_r} 1 + \sum_P \tau(P) \,,
\end{equation*}
where $g_i = g(\Sigma_i)$, $g'_i = g(\Sigma'_i)$, and 
$Q$, $C_r$ are the codimension two strata of $\Sigma$.

If $Q \in \Sigma_i \cap \Sigma_j$,  the associated representation $\rho_Q$ is a tensor product representation with respect to $\mathfrak{g}_i \oplus \mathfrak{g}_j$.


\end{conjecture}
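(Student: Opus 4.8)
First I would reduce the assertion to a purely ``charged'' identity. Subtracting $\sum_P\tau(P)$ from both sides and using the definition of $\r'$ (the algebra $\r'-\sum_P\tau(P)=30K_B^2+\tfrac12(\chi_{top}(X)-\sum_P m(P))$ is immediate), the conjecture becomes equivalent to
\[
30 K_B^2 + \tfrac{1}{2}\Big(\chi_{top}(X) - \sum_P m(P)\Big) = \sum_i\big[(g_i-1)(\dim\adj_i)_{ch} + (g_i'-g_i)(\dim\rho_{0,i})_{ch}\big] + \sum_Q(\dim\rho_Q)_{ch} + \sum_{C_r} 1 .
\]
By Theorem~\ref{main} the left-hand side equals $30K_B^2+\kD(X)-\cD(X)$, so it is the smoothing-corrected Euler characteristic and the localized uncharged data (the versal-deformation count $H_{unch}^{loc}=\sum_P\tau(P)$ of Definition~\ref{Hch}) has cancelled entirely. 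In this form the right-hand side is purely charged, which is exactly the quantity that the Mayer--Vietoris deconstruction of \cite{GrassiMorrison03} computes; the plan is to extend that deconstruction to several non-abelian components and positive Mordell--Weil rank.

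Next I would deconstruct $\chi_{top}(X)$ along the stratification $\Sigma=\Sigma_0\cup\bigcup_i\Sigma_i$, as summarized in equ.~(A.11) of \cite{ArrasGrassiWeigand}. Over $B\setminus\Sigma$ the fibration is smooth elliptic and contributes nothing; the residual $I_1$ locus $\Sigma_0$ reproduces the baseline that cancels $30K_B^2$ (cf.\ the smooth case $30K_B^2+\tfrac12\chi_{top}=0$). Each codimension-one component $\Sigma_i$ then contributes, by the componentwise repetition of the analysis of Theorem~\ref{Anomalycondition1} through Proposition~\ref{TateAlg} and Lemma~\ref{def:g'}, the summand $(g_i-1)(\dim\adj_i)_{ch}+(g_i'-g_i)(\dim\rho_{0,i})_{ch}$. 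Additivity over distinct $\Sigma_i$ is legitimate because $B$ is smooth and each $\Sigma_i$ is Cartier, so the partial resolutions can be performed independently away from the pairwise intersections.

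The heart of the argument is the codimension-two sector, which I would split into three kinds of points and treat through the methods of Section~\ref{sec_localized}. For $Q\in\Sigma_0\cap\Sigma_i$ the Katz--Vafa decomposition (Method~\ref{Method-KV}), exactly as in Case~3 of the proof of Theorem~\ref{Anomalycondition1}, produces a representation $\rho_Q$ of the single factor $\mathfrak g_i$ with local contribution $(\dim\rho_Q)_{ch}$; when a $\Q$-factorial terminal singularity $P$ lies over such a $Q$, its Milnor number enters $\chi_{top}$ while its Tyurina number is absorbed into the cancelled $\sum_P\tau(P)$, the two being reconciled precisely by the reduction of the first step. For $Q\in\Sigma_i\cap\Sigma_j$ the fiber enhances to a higher Kodaira type whose algebra contains $\mathfrak g_i\oplus\mathfrak g_j$, and by the Remark following Method~\ref{Method1} the associated $\rho_Q$ is a tensor-product (bifundamental) representation of $\mathfrak g_i\oplus\mathfrak g_j$, again contributing $(\dim\rho_Q)_{ch}$. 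Finally, over each Mordell--Weil point $C_r$ the fiber is $I_2$ with $X$ locally smooth, carrying a single $\mathfrak u(1)^{\oplus r}$-charged singlet, i.e.\ $(\dim\rho)_{ch}=1$, which accounts for the $\sum_{C_r}1$ term.

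To finish I would invoke the global numerical identities counting each type of codimension-two point in terms of intersection numbers on $B$ --- the multi-component analogue of Proposition~\ref{link}, with $\Sigma_i\cdot\Sigma_j$ controlling the bifundamental points and $\Sigma_0\cdot\Sigma_i$ the charged-matter points --- and check that the assembled local contributions sum to the right-hand side above. The main obstacle is precisely this codimension-two step in full generality: unlike Theorem~\ref{Anomalycondition1}, which is proved by exhausting the finitely many entries of Table~\ref{tab:A}, there is no finite Kodaira-type classification of the strata $\Sigma_i\cap\Sigma_j$ or of the enhancement at $C_r$, so the representation-theoretic dimension identities must be established structurally rather than case by case. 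This in turn rests on Method~\ref{Method1} and Method~\ref{Method-KV} yielding the same localized representation --- itself only conjectural above --- and on uniform control of the local intersection theory through the $\Q$-factorial terminal singularities, for which no classification is available. I expect the difficulty to concentrate in matching the intersection-theoretic weight vectors against the Lie-theoretic dimensions for the bifundamental and abelian strata, where a genuinely new idea beyond the deconstruction method, plausibly the deformation-theoretic approach announced in \cite{GrassiHalversonShanesonJS}, would likely be required.
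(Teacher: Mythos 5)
Your proposal does not prove the statement, and it cannot be checked against a paper proof because there is none: in the paper this is stated as a conjecture, and the only proved case is that of a single non-abelian component with the residual $I_1$ locus and no Mordell--Weil contribution (Theorem \ref{Anomalycondition1}), established by exhausting the finitely many Weierstrass types of Table \ref{tab:A} under the genericity assumption. Your opening reduction is correct and useful: subtracting $\sum_P \tau(P)$ from both sides gives $\r' - \sum_P \tau(P) = 30K_B^2 + \tfrac{1}{2}\bigl(\chi_{top}(X) - \sum_P m(P)\bigr)$, which by Theorem \ref{main} equals $30K_B^2 + \kD(X) - \cD(X)$, so the localized uncharged terms do cancel. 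But everything after that is a plan rather than an argument, and you say as much yourself.

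The genuine gaps are exactly the ones you flag, and they are not details: (i) the contribution of points $Q \in \Sigma_i \cap \Sigma_j$ rests on the claim that $\rho_Q$ is a tensor-product representation with the correct charged dimension, which is part of the statement being proved, not an available input; (ii) the $\sum_{C_r} 1$ term requires locating and counting the $I_2$ points attached to the Mordell--Weil generators, and no multi-component analogue of Proposition \ref{link} is established anywhere in the paper; (iii) the engine that actually proves Theorem \ref{Anomalycondition1} --- finite case-by-case verification --- does not extend, since there is no finite classification of the collisions $\Sigma_i \cap \Sigma_j$ or of the enhancements over $C_r$, particularly through $\Q$-factorial terminal singularities; and (iv) you invoke the agreement of Method \ref{Method1} and Method \ref{Method-KV}, which the paper itself leaves as a separate conjecture. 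So your write-up is a reasonable research outline, plausibly the route the authors envision, but it reduces the conjecture to open problems of the same order as the conjecture itself, which is precisely why the paper states it without proof.
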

If $m(P) =\tau(P)$ for all $P$, $\r=\r'$.

\subsection{ Birational Kodaira Classification and  elliptic fibration of higher dimensional varieties}
\begin{thm} \label{Correps} The algebra and the representations are birational invariants of the $\Q$-factorial terminal minimal model $X \to B$. 
\end{thm}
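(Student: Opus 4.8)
The plan is to reduce the statement to invariance under a single flop and then track the weight-lattice data through it. First I would fix the Weierstrass model $W\to B$ and consider two $\Q$-factorial terminal minimal models $X\to B$ and $X'\to B$ of it. By Kawamata's theorem on minimal models of threefolds (already invoked in the proof of Corollary \ref{mainbir}, see \cite{KollarFlops}), any two such minimal models are connected by a finite sequence of flops over $B$, each of which is an isomorphism in codimension one. Hence it suffices to prove invariance of $\mathfrak{g}$ and of the representations under a single flop $\phi\colon X\dashrightarrow X'$. Since $X$ has isolated terminal singularities and $\phi$ is an isomorphism in codimension one, the flopping curves lie in the fibers $X_Q$ over the codimension-two strata $Q$ of $\Sigma$, while over the generic point of each component $\Sigma_j$ the map $\phi$ is an isomorphism.

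The invariance of the algebra is then immediate: the discriminant $\Sigma\subset B$ and the Kodaira--Tate fiber type over the generic point of each $\Sigma_j$ are determined by $W\to B$ and are unchanged by $\phi$, so $\mathfrak{g}(\Sigma_j)$ is the same for $X$ and $X'$ by Proposition \ref{TateAlg} and Lemma \ref{def:g'}. The ruled divisors $D_j^l$ and the ruling curves $\ell_{j,k}$ lie over codimension-one loci and are therefore matched by proper transform, giving canonical identifications $H^2(X,\Q)\cong H^2(X',\Q)$ and $H_2(X,\Q)\cong H_2(X',\Q)$ on the relevant sublattices; their pairing reproduces the same negative Cartan matrix (Proposition \ref{Cartan-proposition}), so the coroot lattice $\Lambda$ and the weight lattice $L$ of Definitions \ref{L1}--\ref{L2} are identified.

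For the representations I would use the standard behaviour of intersection numbers under a flop. Proper transform preserves the pairing $D\cdot C$ for every curve class $C$ not contracted by the flopping contraction, while for a flopped curve $C$ the intersection number with the proper transform changes sign, $D'\cdot C'=-\,D\cdot C$. On the weight data $\beta^l(\ell^a_Q)=-\,\ell^a_Q\cdot D^l_j$ of Conjecture \ref{Conjecture1} this says precisely that $\phi$ acts as a composition of reflections in the Weyl group $W(\mathfrak{g})$: a flopping wall is crossed exactly when some fiber component $\ell^a_Q$ changes sign, and crossing it reflects the corresponding weight in the associated root. Because the span of the ruling curves $\ell_{j,k}$ (the root lattice) is $W(\mathfrak{g})$-invariant, the set $M(\ell^a_Q)$ of Method \ref{Method1} is carried by $\phi$ to $w\cdot M(\ell^a_Q)$ for some $w\in W(\mathfrak{g})$, up to the overall sign interchanging a weight with its dual. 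Hence the collection of weights, together with the self-duality alternative $M(\ell^a_Q)=\pm M(\ell^a_Q)$ and the multiplicities $\delta_a$ recorded in Method \ref{Method1}, is preserved, and $\rho_Q$ is the same representation of $\mathfrak{g}$ for $X$ and $X'$. As a cross-check, Method \ref{Method-KV} computes $\rho_Q$ directly from the Brieskorn--Grothendieck versal-deformation data of the singularity of $W$ over $Q$ together with the monodromy cover, data intrinsic to $W$ and visibly independent of the chosen minimal model.

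The hard part will be making precise that $\phi$ acts on the weight-lattice data through an element of $W(\mathfrak{g})$ rather than by an arbitrary lattice automorphism. Concretely, one must verify that the flopping walls of $X/B$ are exactly the loci where the classes $\ell^a_Q$ of fiber components in $X_Q$ become non-effective, and that crossing such a wall reflects the weight in a genuine root of $\mathfrak{g}(\Sigma_j)$, with the effective versus anti-effective dichotomy of Conjecture \ref{Conjecture1}(3) transforming consistently. This is where the $\Q$-factorial terminal and isolated-singularity hypotheses are essential: they guarantee that the flopped curves are precisely the rational curves in the fibers $X_Q$ and that the relative movable cone of $X/B$ is chambered by these root hyperplanes, so that every minimal model corresponds to one chamber and all chambers are related by $W(\mathfrak{g})$.
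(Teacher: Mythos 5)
Your reduction to a single flop and your treatment of the algebra and the unlocalized data are fine, but the core of your representation argument has a genuine gap, and you say so yourself: the claim that a flop acts on the weight-lattice data ``through an element of $W(\mathfrak{g})$'' and that the relative movable cone is chambered by root hyperplanes with all chambers related by $W(\mathfrak{g})$ is precisely what you never prove. Worse, as stated it is not correct in general: the walls of the relative movable cone relevant here are \emph{weight} hyperplanes, not root hyperplanes, and crossing such a wall flips the effectivity of a single weight curve (the flopped curve class $C$ goes to $-C$ under the canonical identification of $N_1$, while proper transforms of adjacent fiber components shift by multiples of $[C]$); this operation is in general not induced by any element of $W(\mathfrak{g})$, and the number of chambers (sign assignments on the weights of $\rho_Q$) exceeds what a $W(\mathfrak{g})$-torsor would allow. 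What actually saves the statement is that Method \ref{Method1} is designed to be insensitive to effectivity --- it only remembers the sets $M(\ell^a_Q)$ up to the sign ambiguity $\pm M$ --- but turning that observation into a proof is exactly the missing step, and your fallback via Method \ref{Method-KV} cannot close it, because the agreement of Method \ref{Method1} and Method \ref{Method-KV} is itself only a conjecture in this paper (stated explicitly after Method \ref{Method-KV}), so invariance of the Katz--Vafa data does not imply invariance of the intersection-theoretic representations.

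You also miss the much shorter route the paper takes, which avoids the general wall-crossing machinery entirely. Theorem \ref{Correps} is proved under the genericity assumptions of Section \ref{chitop}, so the singular cases are only Models 1, 5 and 7 of Table \ref{tab:A}, all analyzed explicitly in the proof of Theorem \ref{Anomalycondition1}. The paper's proof then combines three facts: the invariant $\mathcal{R}$ of Definition \ref{defr} is independent of the choice of minimal model (Corollary \ref{mainbir}, which already contains the flop-connectedness and the invariance of Betti numbers, singularity types and Milnor/Tyurina numbers); the gauge algebra and the unlocalized representations are invariant by construction, since they are read off from the Weierstrass model along codimension-one strata where any flop is an isomorphism; and the case analysis in Theorem \ref{Anomalycondition1} shows that the possible flops are supported in the fibers over points $Q$ and are isomorphisms near the fibers over any $Q'$ with $\rho_{Q'}\neq\rho_Q$, so the localized representation content cannot change. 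If you want a proof in the generality you are aiming at (arbitrary configurations, no genericity), you would need to establish the chamber structure of the relative movable cone for singular $\Q$-factorial terminal models --- something known in the literature essentially only for smooth resolutions --- so you should either restrict to the paper's hypotheses or supply that foundation.
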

\begin{proof}   $\r$ is a birational invariant of  the minimal model of the fibration $X \to B$   (Corollary  \ref{mainbir}). 
The gauge algebra and the unlocalized representations are  also invariant by construction. 
The proof of Theorem \ref{Anomalycondition1} shows 
that the possible flops in the fibers over the points $Q$  are isomorphisms  in the neighborhood of the fibers over $Q'$  for  $\rho_Q\neq \rho_{Q'}$.
\end{proof} 
The dimensions of the representations, listed in last four columns of Table  \ref{tab:A} uniquely,  determine the Kodaira type of the general fibers over the codimension one strata of the discriminant, as well as the fibers over the codimension two strata, up to birational transformations of the relative minimal model of the fibration.
Based on Theorem \ref{Correps} we make the following
\begin{conjecture}\label{KodairaExt} The   Kodaira classification of  singular fibers on relatively minimal elliptic  surfaces, with section, extends to the class of birationally equivalent  relatively minimal  elliptic   threefolds.
The classification is obtained by associating to the stratified discriminant  locus   of the fibration $\Sigma$ the non abelian gauge algebras and their representations.
\end{conjecture}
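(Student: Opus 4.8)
The plan is to reduce the statement to the behaviour of the invariants under a single flop, using the fact already invoked in Corollary \ref{mainbir} that any two relative minimal models $X$ and $X'$ of the fixed Weierstrass model $W \to B$ are connected by a finite sequence of flops, each of which is small (an isomorphism in codimension one). First I would dispose of the codimension-one data. The gauge algebra $\mathfrak{g} = \bigoplus_{\Sigma_j}\mathfrak{g}(\Sigma_j)$ of Proposition \ref{TateAlg} and the unlocalized representations of Lemma \ref{method-unloc} are read off from the Kodaira--Tate type of the generic fiber over each component $\Sigma_j \subset B$, together with the genera $g(\Sigma_j)$ and $g(\Sigma_j')$. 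Since a flop is small it is an isomorphism over the generic point of every $\Sigma_j$, so these data are literally unchanged; equivalently they are intrinsic to $W \to B$ and hence birational invariants by construction.

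The content is therefore the invariance of the localized representations $\rho_Q$ attached to the codimension-two strata. Here I would first exploit locality: the exceptional locus of any flop lies in fibers over finitely many points of $B$, so a flop whose flopping curves sit in $X_Q$ is an isomorphism in a neighbourhood of $X_{Q'}$ for every other stratum $Q' \neq Q$. Consequently the fiber $X_{Q'}$, its irreducible components $\ell^a_{Q'}$, and all intersection numbers $\ell^a_{Q'} \cdot D^l_j$ entering Method \ref{Method1} are preserved, and so is $\rho_{Q'}$. This is exactly the observation quoted from the proof of Theorem \ref{Anomalycondition1}.

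It remains to treat the flopped stratum $Q$ itself. The key point is that a flop preserves Betti numbers (\cite[Theorem 3.2.2]{KollarFlipsFlopsMMetc}) and, under the induced identifications $H_2(X,\Q)\cong H_2(X',\Q)$ and $H_4(X,\Q)\cong H_4(X',\Q)$, also the intersection pairing and the ruled divisors $D^l_j$ that span $\Lambda$; its only effect on the relevant curve classes is to reverse the sign of the intersection of a flopping curve with those divisors not pulled back from $B$. By Conjecture \ref{Conjecture1}(3) this reversal is precisely the interchange of the effective and anti-effective elements of the set $M(\ell^a_Q)=\{C_p(\rho_{Q,a})\}$, so the \emph{full} collection of weight vectors $\beta^l_p=-C_p(\rho_{Q,a})\cdot D^l_j$ is left invariant. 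Since a representation of the semisimple $\mathfrak{g}(\Sigma_j)$ is determined by its weight system with multiplicities, $\rho_Q$ is unchanged. Alternatively, within the generic setting of Theorem \ref{Anomalycondition1} one may argue more cheaply: $\r$ is a birational invariant (Corollary \ref{mainbir}) and $\sum_P \tau(P)$ is invariant because birationally equivalent minimal models share the same analytic singularity types, hence the same Milnor numbers (\cite[Theorem 2.4]{KollarFlops}); formula (\ref{requation}) then forces $\sum_Q (\dim \rho_Q)_{ch}$ to be invariant, and combined with the locality above and the fact that each entry of Table \ref{tab:A} is pinned down by its charged dimension, this already yields $\rho_Q = \rho_{Q'}$ at the flopped point.

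I expect the main obstacle to be exactly this flopped stratum: one must verify that a flop does no more than interchange the effective and anti-effective halves of the weight system, that is, that the set $M(\ell^a_Q)$ together with its pairing against $\Lambda$ is genuinely preserved and not merely its cardinality or the charged dimension $(\dim\rho_Q)_{ch}$. Making this precise in the presence of the $\Q$-factorial terminal singularities of $X$, where the classes $\ell^a_Q$ live in $H_2(X,\Z)\cap[B_1]^\perp$ rather than in the homology of a smooth resolution, is the delicate point; the reduction to the finite list of generic cases of Theorem \ref{Anomalycondition1}, in which the representation is recovered from its dimension alone, is what makes the argument go through unconditionally in that setting.
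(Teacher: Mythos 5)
The statement you set out to prove is presented in the paper as a \emph{conjecture}, and the paper does not prove it; what your argument actually targets (modulo the issues below) is Theorem \ref{Correps} --- the birational invariance of the gauge algebra and of the representations --- which is exactly the supporting evidence the paper offers for Conjecture \ref{KodairaExt}, not the conjecture itself. The conjecture asserts strictly more: that assigning to the stratified discriminant the non-abelian algebras and their representations yields a \emph{classification} of singular fibers of relatively minimal elliptic threefolds extending Kodaira's. That requires (a) an unconditional definition of the localized representations $\rho_Q$ for an arbitrary such fibration, and (b) completeness/faithfulness of the dictionary, i.e.\ that this data determines the fibers up to birational equivalence of the relative minimal model. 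Your proposal addresses neither point; the paper itself only asserts the ``uniquely determines'' property for the finite list of generic configurations in Table \ref{tab:A}, and states the general case as open.

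Two steps in your argument would also fail as written. First, your treatment of the flopped stratum invokes Conjecture \ref{Conjecture1}(3) (the splitting of the classes $C_p(\rho_{Q,a})$ into effective and anti-effective curves); this is itself an unproven conjecture in the paper, verified only under genericity, so it cannot serve as an ingredient in an unconditional proof. Second, your ``cheaper'' fallback --- reducing to Theorem \ref{Anomalycondition1} and recovering $\rho_Q$ from its charged dimension --- is only available under the genericity assumptions of Section \ref{gags} (discriminant of the form $\Sigma_0 \cup \Sigma_1$ with $\Sigma_1$ smooth and otherwise general Weierstrass data), whereas Conjecture \ref{KodairaExt} carries no such hypothesis; moreover, a representation of a semisimple algebra is not in general determined by its (charged) dimension --- within Table \ref{tab:A} this happens to work because the generic list is short --- and formula (\ref{requation}) is not known outside that setting. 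What you have is, at best, a sketch of Theorem \ref{Correps} in the same spirit as the paper's own proof (locality of flops away from the flopped fiber, invariance of $\r$ and of the Tyurina numbers), plus a restatement of precisely the open problems the conjecture encodes.
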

  We speculate that the classifications can be suitably extended in higher dimension, for example, in the case of fourfolds with the addition of information on the Yukawa coupling.
 Recent work   \cite{Anderson:2018heq} suggests that  multiple fibers  in  a Calabi-Yau  of  type $_m I_0$ are associated to  discrete torsion.
 \begin{conjecture}
 The   Kodaira classification of  singular fibers on relatively minimal genus one  surfaces extends to the class of birationally equivalent  relatively minimal genus one fibered  varieties as in \ref{KodairaExt}. The multiple fibers of multiplicity $m$ in  a Calabi-Yau  are associated to discrete torsion $\Z /m \Z$.
 \end{conjecture}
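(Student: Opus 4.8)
The plan is to reduce the genus one case to the elliptic (sectioned) case already addressed by Conjecture~\ref{KodairaExt} and Theorem~\ref{Anomalycondition1}, and then to account separately for the one new phenomenon that distinguishes genus one from elliptic fibrations, namely multiple fibers. First I would form the Jacobian elliptic fibration $J(X) \to B$ of the relatively minimal genus one fibration $\pi : X \to B$. By \cite{BraunMorrison} the Jacobian of a general genus one fibration carries $\Q$-factorial terminal Gorenstein singularities, so $J(X)$ lies within the class to which the sectioned classification applies; one takes a relatively minimal model of $J(X)$ (equidimensional, by \cite{GrassiEqui}) and attaches to each stratum of its discriminant the gauge algebra $\mathfrak g(\Sigma_j)$ together with the (un)localized representations of Section~\ref{subsection_reps}.

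The second step is a local comparison of $X$ and $J(X)$ along the discriminant. Restricting both fibrations to a general curve transverse to a codimension one component $\Sigma_j$ reduces the question to relatively minimal genus one surfaces, where Kodaira's classification and the identity of the singular fiber type of a genus one surface with that of its Jacobian (away from multiple fibers) are classical. This shows that the Kodaira type over the generic point of $\Sigma_j$, and hence $\mathfrak g(\Sigma_j)$ and the unlocalized representation, transfer verbatim from $J(X)$ to $X$; the codimension two analysis producing the localized representations $\rho_Q$ is then carried over by the same deconstruction argument and the birational-invariance statement of Theorem~\ref{Correps}.

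The heart of the conjecture is the correspondence between multiple fibers and torsion. Here the plan is to invoke Ogg--Shafarevich theory: $X$ is a torsor over $J(X)$ classified by an element of the Weil--Ch\^atelet (Tate--Shafarevich) group $H^1(B, \mathcal J)$ of the Jacobian, and a codimension one multiple fiber of multiplicity $m$ should contribute a local factor $\Z/m\Z$ to this group, matching the discrete gauge symmetry predicted in \cite{Anderson:2018heq}. One would verify this first on a transverse surface slice, where a multiple fiber ${}_m I_0$ is classically known to contribute $\Z/m\Z$ to the local Weil--Ch\^atelet group, and then propagate the statement along $\Sigma_j$ using the constancy of the generic fiber and the local-to-global principles of Sections~\ref{BG} and \ref{chitop}.

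The main obstacle will be making the higher-dimensional Ogg--Shafarevich theory rigorous with singular total spaces, and proving that the resulting classification is \emph{complete} --- that the gauge algebra, its representations, and the multiplicities of the multiple fibers together determine the genus one fibration up to the birational equivalence of relatively minimal models. Establishing the existence and good behaviour (equidimensionality, control of $\Q$-factorial terminal singularities) of relatively minimal models for non-sectioned genus one fibrations in dimension $\geq 3$, and showing that multiple fibers exhaust the discrepancy between $X$ and $J(X)$, is precisely the part that is currently only conjectural and for which the present results are supporting evidence rather than a proof.
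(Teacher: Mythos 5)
The statement you were asked to prove is a \emph{conjecture} in the paper: the authors give no proof of it. Their support for it consists of Theorem \ref{Correps} (birational invariance of the gauge algebra and representations for $\Q$-factorial terminal minimal models), the verification of the algebra--representation correspondence under genericity hypotheses in Theorem \ref{Anomalycondition1} and Table \ref{tab:A}, and, for the clause about multiple fibers and discrete torsion, nothing more than a citation of the physics work \cite{Anderson:2018heq}. Your proposal is therefore correctly calibrated: you outline a program rather than a proof, and your closing admission that the completeness statement and the higher-dimensional theory of minimal models without section are ``currently only conjectural'' matches exactly the status the paper assigns to this statement.

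Where you genuinely go beyond the paper is in the third step: the paper never mentions torsors, Weil--Ch\^atelet or Tate--Shafarevich groups, and it invokes the Jacobian fibration only to exhibit examples of $\Q$-factorial terminal singularities (Example \ref{terminalconifolds}, via \cite{BraunMorrison}). Your suggestion to realize the multiple-fiber/$\Z/m\Z$ correspondence through Ogg--Shafarevich theory on transverse slices supplies a concrete mathematical mechanism for a claim the paper supports only by reference to string theory; this is a sensible and potentially fruitful refinement. One caveat you should flag in your second step: transferring the codimension-two (localized) representation data from $J(X)$ back to $X$ is not automatic, because the weight-lattice constructions of the paper (Definitions \ref{L1} and \ref{L2}, Method \ref{Method1}, Conjecture \ref{Conjecture1}) use a section $B_1$ in an essential way --- the paper itself only asserts that $L_1$ and $\Lambda$ ``can be defined'' for genus one fibrations, and identifies $L_1 \simeq L_2$ only after choosing a section. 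So even the reduction-to-the-Jacobian step of your program contains a gap of the same nature as the conjecture itself, and should be listed alongside the obstacles in your final paragraph rather than treated as settled.
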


%


\bibliographystyle{siam}
\bibliography{bibio}
\end{document}


\maketitle

Our goal here is to prove a result (presumably well-known to experts), giving a context where 
the ``Lefschetz $(1,1)$-theorem'' holds for certain singular complex projective varieties.  
\begin{theorem}\label{Lefschetz}
Let $X$ be a normal projective variety over the field ${\mathbb C}$ of complex numbers. Let 
$\pi:Y\to X$ be a resolution of singularities. Assume: $R^1\pi_*{\mathcal O}_Y=0$ (this condition 
is independent of the choice of resolution). Then
\begin{enumerate}
 \item the singular cohomology $H^2(X,{\mathbb Z})$ supports a {\em pure} Hodge structure
 \item the image of the Chern class map $c_1: Pic\,(X)\to H^2(X,{\mathbb Z})$ concides with
 the subgroup of $(1,1)$ classes, i.e. with the subgroup
 \[\{\alpha\in H^2(X,{\mathbb Z})\,|\, \alpha_{\C}\in H^2(X,\C)\mbox{ is of type $(1,1)$}\}.\]
\end{enumerate}
\end{theorem}
\begin{example}
Let $X$ be a normal projective variety over $\C$ with $\dim X\geq 3$, and only isolated Cohen-Macaulay 
singularities (eg., isolated complete intersection singularities). Then $X$ satisfies the hypotheses 
of Theorem~\ref{Lefschetz}. (The argument below is presumably also well known to experts.) 

Indeed, let $S=\{x_1,\ldots,x_r\}$ be the singular locus. If $\pi:Y\to X$ is a resolution, 
then $R^1\pi_*{\mathcal O}_Y$ is supported within the singular locus, and so is a direct sum 
of skyscraper sheaves supported at the points $x_i$. So it suffices to show that the stalk at each 
$x_i$ vanishes. 

Let $Z_i=Y\times_X{\rm Spec}\, {\mathcal O}_{X,x_i}$, so that the stalk of 
$R^1\pi_*{\mathcal O}_Y$ at $x_i$ is $H^1(Z_i,{\mathcal O}_{Z_i})$. Let $E_i=\pi^{-1}(x_i)$. 

Then we have that 
\[H^1_{E_i}(Z_i,{\mathcal O}_{Z_i})=0\]
from the Grauert-Riemenschneider theorem. 

If $U_i=Z_i\setminus E_i$, which is also isomorphic to the punctured specrum of ${\mathcal O}_{X,x_i}$, then we have an exact sequence 
in local cohomology 
\[\cdots \to H^1_{E_i}(Z_i,{\mathcal O}_{Z_i})\to H^1(Z_i,{\mathcal O}_{Z_i})\to H^1(U_i,{\mathcal O}_{U_i})\to\cdots\]

On the other hand, we also have a sequence in local cohomology for the structure sehaf of ${\rm Spec}\,{\mathcal O}_{X,x_i}$ and 
its punctured spectrum; this yields an isomorphism
\[H^1(U_i,{\mathcal O}_{U_i})\cong H^2_{\mathfrak M_{x_i}}({\mathcal O}_{X,x_i}).\]

Finally, since I assumed $X$ is Cohen-Macaulay, {\em of dimension $\geq 3$}, the local cohomologies satisfy
\[H^j_{\mathfrak M_{x_i}}({\mathcal O}_{X,x_i})=0\;\;\forall\;j<\dim X,\]
and in particular for $j=2$. 

Thus $H^1(Z_i,{\mathcal O}_{Z_i})$ vanishes as claimed. (In fact, one has that $R^j\pi_*{\mathcal O}_Y=0$ for $j<\dim X-1$, by a similar argument.)
\end{example}
Now we sketch the proof of Theorem~\ref{Lefschetz}.
\begin{proof}
We now work with the analytic topology. By GAGA, the condition $R^1\pi_*{\mathcal O}_Y=0$, for the analytic topology, is the same
as the similar condition in the Zariski topology. Note also that ${\rm Pic}\,(X)$ is identified with the isomorphism classes of 
analytic line bundles.   

From the exponential sheaf sequence on $Y$, we have a long exact sequence of sheaves on $X$
\[0\to \pi_*\Z_Y\to \pi_*{\mathcal O}_Y\to {\mathcal O}^*_Y\stackrel{\partial}{\rightarrow} R^1\pi_*\Z_Y\to R^1\pi_*{\mathcal O}_Y\cdots\]
Since $X$ is normal, the first three terms are just the exponential sequence on $X$, and in particular, $\partial=0$. Since we assumed 
$R^1\pi_*{\mathcal O}_Y=0$, we get also that $R^1\pi_*\Z_Y=0$. 

Now the Leray spectral sequence implies that 
\begin{enumerate}
 \item[(i)] $H^1(X,\Z)\to H^1(Y,\Z)$ and $H^1(X,{\mathcal O}_X)\to H^1(Y,{\mathcal O}_Y)$ are isomorphisms 
 (this implies that ${\rm Pic}^0(X)\to {\rm Pic}^0(Y)$ is an isomorphism)
\item[(ii)] $H^2(X,\Z)\to H^2(Y,\Z)$ and $H^2(X,{\mathcal O}_X)\to H^2(Y,{\mathcal O}_Y)$ are injective.
 \end{enumerate}

 From (ii), it follows that $H^2(X,\Z)$ supports a {\em pure} Hodge structure, which necessarily satisfies $F^1H^2(X,\C)=H^2(X,\C)\cap F^1H^2(Y,\C)$, 
 and further, using the exponential sheaf sequence, that 
 $$\displaylines{
 NS(X)=\ker\left(H^2(X,\Z)\to H^2(X,{\mathcal O}_X)\right)=\ker\left(H^2(X,\Z)\to H^2(Y,{\mathcal O}_Y)\right)=\cr
 \ker\left(H^2(X,\Z)\to\frac{H^2(Y,\C)}{F^1H^2(Y,\C)}\right)=\ker\left(H^2(X,\Z)\to\frac{H^2(X,\C)}{F^1H^2(X,\C)}\right).\cr}$$
 Thus, $X$ satisfies the ``Lefschetz $(1,1)$ theorem''.

\end{proof}